\documentclass[12pt,reqno]{amsart}
\usepackage{amsmath}
\usepackage{amsfonts}
\usepackage{amssymb}
\usepackage{amsthm}
\usepackage[alphabetic]{amsrefs} 
\usepackage{algorithm,setspace}
\usepackage{algpseudocode}
\usepackage{booktabs}
\usepackage{mathrsfs}
\usepackage{mdframed}
\usepackage{caption}
\usepackage[most]{tcolorbox}
\usepackage[makeroom]{cancel}
\usepackage{array}
\usepackage[shortlabels]{enumitem}
\usepackage[hidelinks]{hyperref}
\hypersetup{
    colorlinks=true,
    linkcolor=magenta,      
    citecolor=magenta,     
    filecolor=magenta,
    urlcolor=magenta,
}
\usepackage[noabbrev,nosort,capitalise]{cleveref}

\usepackage[margin=1in,headsep=30pt]{geometry}
\pdfpagewidth=\paperwidth
\usepackage{xcolor}
\usepackage{color,soul}
\usepackage{tikz}
\usepackage{tikz-cd}
\usepackage{tkz-euclide}
\usepackage[latin1]{inputenc}
\usepackage{fancyhdr}
\usepackage{stmaryrd}
\usetikzlibrary{
  graphs,
  graphs.standard
}
\usepackage{titlesec}

\titleformat{\section}
  {\normalfont\Large\bfseries}{\thesection }{1em}{}
\titleformat{\subsection}
  {\normalfont\large\bfseries}{\thesubsection.}{1em}{}
\titleformat{\subsubsection}
  {\normalfont\normalsize\itshape}{\thesubsubsection.}{1em}{}
\newcolumntype{C}{>{$}c<{$}}

\theoremstyle{plain}
\newtheorem{thm}{Theorem}
\newtheorem{cor}[thm]{Corollary}

\newtheorem{prop}[thm]{Proposition}

\newtheorem{defn}[thm]{Definition}

\newtheorem{rem}[thm]{Remark}
\newtheorem*{ex}{Example}

\numberwithin{equation}{section}

\fancypagestyle{Euler}{%
  \fancyhf{} 
  
  \fancyhead[LO]{\sl Grothendieck Groups}
  \fancyhead[RE]{Shihan Kanungo}
  \fancyhead[LE,RO]{\thepage}
}%
\DeclareMathOperator{\id}{id}
\DeclareMathOperator{\Sym}{Sym}

\newtheorem{theorem}{Theorem}[section]
\newtheorem{lemma}{Lemma}

\begin{document}

\newpage
\thispagestyle{empty}

\newpage 
\setlength{\parskip}{5pt}
\thispagestyle{empty}
\setcounter{page}{1}

\begin{center}
    {\LARGE \bf The Grothendieck Group and K-Theory} 
    \vskip 5pt
    {\normalsize Grothendieck groups linearize nonlinear classification}
    
    \vspace{0.3in}
    {\large  Shihan Kanungo} 
    
    \vspace{0.05in}
    {\small San Jos\'e State University}
\end{center}
\date{\today}

\vspace{0.2in}
\begin{center}
    {\bf Abstract}
\end{center}
\begin{quote}
    {\footnotesize In this expository paper, we develop the basic ideas underlying Grothendieck groups and to illustrate their appearance across algebra, topology, representation theory, and homological algebra. Motivated by the universal construction associated to a commutative monoid, we define the Grothendieck groups abelian categories and rings. Along the way we study several fundamental examples, including Euler characteristics, projective modules, and representation rings. We conclude with a discussion of $K$-theory and its applications, indicating how the elementary construction of $K_0$ serves as the first layer of a much richer homotopical theory.}
\end{quote} 

\setcounter{section}{0}

\vspace{5mm}
\section{Introduction}

One of the central ambitions of mathematics is classification. Given a class of mathematical objects, one seeks a collection of invariants that completely determines them up to isomorphism. In favorable situations this program succeeds spectacularly. Finite-dimensional vector spaces over a field are classified by dimension, finitely generated abelian groups admit a complete structure theorem, and finitely generated modules over a principal ideal domain decompose into cyclic pieces. These results share a common feature: their structure is sufficiently rigid that one can reduce classification to manageable algebraic data.

Beyond these classical examples, however, classification problems rapidly become intractable. Modules over a general ring resist any reasonable classification theory; even for comparatively simple algebras, the category of modules may be ``wild.'' Likewise, vector bundles over a topological space, coherent sheaves on an algebraic variety, or representations of an infinite group typically form enormous and poorly understood collections. The sheer abundance of such objects makes any direct attempt at classification unrealistic.

The failure of complete classification forces a change in perspective. Rather than attempting to distinguish objects individually, one instead seeks coarser invariants capturing structural information while remaining computable. In many contexts the most natural structure available is addition. Vector spaces admit direct sums, modules admit direct sums, vector bundles admit Whitney sums, and representations admit direct sums. Passing from objects to their isomorphism classes therefore produces a commutative monoid.

Grothendieck's fundamental insight was that such additive structures can be systematically ``linearized.'' Given a commutative monoid $M$, one constructs an abelian group $G(M)$, now called the \emph{Grothendieck group} of $M$, characterized by the universal property that every monoid homomorphism from $M$ to an abelian group factors uniquely through $G(M)$. In the simplest case, the passage
\[
\mathbb N \rightsquigarrow \mathbb Z
\]
is precisely the Grothendieck completion of the additive monoid of natural numbers.

The guiding philosophy of this paper is therefore the following:
\begin{quote}
\textit{Grothendieck groups linearize nonlinear classification problems.}
\end{quote}

\medskip
This principle manifests itself in a remarkable range of mathematical settings. In algebraic $K$-theory, Grothendieck groups provide the first approximation to the structure of projective modules and vector bundles. In representation theory, the representation ring
\[
R(G)=K_0(\mathrm{Rep}(G))
\]
encodes representations through formal additive combinations of irreducible objects. In topology, Grothendieck's construction applied to vector bundles yields topological $K$-theory, one of the most important generalized cohomology theories of the twentieth century. Through the Atiyah--Singer index theorem, $K$-theory connects analysis, geometry, and topology in profound ways.

The influence of Grothendieck groups extends equally deeply into algebraic geometry. The Grothendieck--Riemann--Roch theorem relates algebraic $K$-theory to intersection theory and characteristic classes, revealing that additive invariants of sheaves interact subtly with geometric pushforwards. In homological algebra and derived geometry, Grothendieck groups arise naturally from triangulated and derived categories, where Euler characteristics become manifestations of additivity relations in $K_0$.

The persistence of the same formal mechanism across these disparate domains indicates how fundamental Grothendieck's insight was.
Although the definitions are simple, the scope of the theory is vast. Because of this, Grothendieck groups occupy a unique position in modern mathematics: they are simultaneously bookkeeping devices, universal additive invariants, and shadows of deep geometric and homotopical structures. Their enduring importance lies in the fact that they provide a common algebraic language in contexts where direct classification is impossible.

\section{Defining the Grothendieck Group}

The simplest instance of a Grothendieck group arises from a commutative monoid. 

Given a commutative monoid $M$, its \emph{group completion} is the universal abelian group $G(M)$ equipped with a monoid homomorphism
\[
M \longrightarrow G(M).
\]
Equivalently, $G(M)$ may be described concretely as the abelian group generated by formal symbols $[m]$, one for each element $m\in M$, subject to the relations
\[
[a+b]=[a]+[b].
\]

In this way, the additive structure of the monoid is promoted to a genuine abelian group structure.

Several familiar constructions fit naturally into this framework. The passage
\[
\mathbb N \rightsquigarrow \mathbb Z
\]
is simply the group completion of the additive monoid of natural numbers. Likewise,
\[
(\mathbb Z_{\ne 0},\times)\rightsquigarrow (\mathbb Q^\times,\times)
\]
is the multiplicative analogue obtained by adjoining formal inverses to nonzero integers.

The true power of the construction appears when the monoid $M$ arises not merely from a set with an operation, but from the isomorphism classes of objects in a category.

We encountered a very special case of this phenomenon in \S VI.3.4, where we studied the category
\[
k\textsf{-Vect}^f
\]
of finite-dimensional vector spaces over a field $k$. 
There, the resulting Grothendieck group recovered precisely the information carried by dimension. In retrospect, dimension may be viewed as the universal additive invariant of finite-dimensional vector spaces.

The key observation is that the construction does not fundamentally depend on vector spaces themselves, but only on the existence of exact sequences. This leads naturally to the general notion of a Grothendieck group for an abelian or exact category.

An \emph{exact category} is, roughly speaking, an additive category equipped with a distinguished class of sequences behaving like short exact sequences. In many situations these are the ordinary short exact sequences of an abelian category, though other choices are possible. For instance, the class of split exact sequences forms an important example of an exact structure.

For the remainder of this section, however, we restrict ourselves to the classical setting of abelian categories equipped with their usual short exact sequences.

\begin{defn}
The \emph{Grothendieck group} $K_0(\mathcal C)$ of an abelian category $\mathcal{C}$ is the abelian group generated by symbols $[A]$, one for each isomorphism class of objects $A\in \mathcal C$, subject to the relations
\[
[B]=[A]+[C]
\]
for every short exact sequence
\[
0\longrightarrow A\longrightarrow B\longrightarrow C\longrightarrow 0
\]
in $\mathcal C$.
\end{defn}

Hueristically, the Grothendieck group ``forgets'' the internal extension data of the category while retaining its additive structure.

The guiding principle behind this construction is often described by the modern term \emph{decategorification}: one replaces a complicated category by a more tractable algebraic object, typically an abelian group or ring. Much of the subtlety of the original category is discarded, but enough information remains to capture important structural invariants.

We now establish several basic properties of Grothendieck groups.

\begin{lemma}\label{lem: direct sum}
In $K_0(\mathcal C)$, one has
\[
[A\oplus B]=[A]+[B].
\]
\end{lemma}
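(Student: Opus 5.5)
The plan is to exhibit a short exact sequence with $A\oplus B$ in the middle, after which the lemma follows at once from the defining relations of $K_0(\mathcal C)$. In an abelian category the biproduct $A\oplus B$ carries canonical inclusions and projections
\[
\iota_A\colon A\to A\oplus B,\quad \iota_B\colon B\to A\oplus B,\quad \pi_A\colon A\oplus B\to A,\quad \pi_B\colon A\oplus B\to B,
\]
satisfying $\pi_A\iota_A=\id_A$, $\pi_B\iota_B=\id_B$, $\pi_B\iota_A=0$, $\pi_A\iota_B=0$ and $\iota_A\pi_A+\iota_B\pi_B=\id_{A\oplus B}$. The sequence to use is
\[
0\longrightarrow A\xrightarrow{\ \iota_A\ } A\oplus B\xrightarrow{\ \pi_B\ } B\longrightarrow 0 .
\]
Once we know it is exact, the relation imposed in the definition of $K_0(\mathcal C)$ reads $[A\oplus B]=[A]+[B]$, which is exactly the claim.

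To check exactness I will verify three things.
\begin{enumerate}[(i)]
\item $\iota_A$ is a monomorphism, because it has the left inverse $\pi_A$.
\item $\pi_B$ is an epimorphism, because it has the right inverse $\iota_B$.
\item $\iota_A$ is a kernel of $\pi_B$. First, $\pi_B\iota_A=0$. Next, let $f\colon X\to A\oplus B$ satisfy $\pi_B f=0$. Then
\[
f=(\iota_A\pi_A+\iota_B\pi_B)f=\iota_A(\pi_A f),
\]
so $f$ factors through $\iota_A$. This factorization is unique because $\iota_A$ is monic.
\end{enumerate}
In an abelian category, a monomorphism that is the kernel of an epimorphism gives a short exact sequence, so the sequence is exact.

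The only point needing care is (iii): showing that $\iota_A$ is genuinely the kernel, and not merely a morphism composing to zero with $\pi_B$. The identity $\iota_A\pi_A+\iota_B\pi_B=\id$ handles this directly. Everything else is formal. One also needs that isomorphic objects give the same symbol, which holds by definition since generators are indexed by isomorphism classes. This also makes the conclusion independent of the chosen model of $A\oplus B$.
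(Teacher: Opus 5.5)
Your proposal is correct and follows the paper's proof, which uses the same short exact sequence $0\to A\to A\oplus B\to B\to 0$ and then applies the defining relation of $K_0(\mathcal C)$. Your check of exactness through the biproduct identities (in particular that $\iota_A$ is a kernel of $\pi_B$) fills in a step the paper simply asserts.
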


\begin{proof}
The direct sum fits into a short exact sequence
\[
0\longrightarrow A\longrightarrow A\oplus B\longrightarrow B\longrightarrow 0,
\]
so the defining relations of $K_0(\mathcal C)$ immediately give
\[
[A\oplus B]=[A]+[B].
\qedhere
\]
\end{proof}

Thus the addition law in the Grothendieck group extends the direct-sum operation of the category itself.
The next lemma shows that every element of $K_0(\mathcal C)$ may be represented as a formal difference of classes of objects.

\begin{lemma}\label{lem: difference}
Every element of $K_0(\mathcal C)$ can be written in the form
\[
[B]-[C]
\]
for some objects $B,C\in\mathcal C$.
\end{lemma}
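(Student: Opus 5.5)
Since $K_0(\mathcal C)$ is defined as a quotient of the free abelian group on isomorphism classes $[A]$, every element is the image of a finite integer combination of generators. So an arbitrary element $x\in K_0(\mathcal C)$ can be written as
\[
x=\sum_{i=1}^{m}[A_i]-\sum_{j=1}^{n}[C_j]
\]
for objects $A_1,\dots,A_m,C_1,\dots,C_n$ of $\mathcal C$. To get this form, I would group the terms with positive coefficients and those with negative coefficients, repeating an object as often as its coefficient dictates. The plan is to collapse each of these two sums into the class of a single object.

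The collapsing step uses \cref{lem: direct sum}. Since $\mathcal C$ is abelian, it has finite direct sums. Set
\[
B=A_1\oplus\cdots\oplus A_m \quad\text{and}\quad C=C_1\oplus\cdots\oplus C_n.
\]
Applying \cref{lem: direct sum} repeatedly, by induction on the number of summands, gives $[B]=\sum_i[A_i]$ and $[C]=\sum_j[C_j]$. Therefore $x=[B]-[C]$, as required.

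The only point needing care is the degenerate case where one of the sums is empty, for instance when $x$ is itself a sum of generators or when $x=0$. Here I would use the zero object $0$ of $\mathcal C$, which exists because $\mathcal C$ is additive. Its class satisfies $[0]=0$ in $K_0(\mathcal C)$: either apply \cref{lem: direct sum} to $0\oplus 0\cong 0$, or use the exact sequence $0\to0\to0\to0\to0$, which gives $[0]=[0]+[0]$. So an empty sum is represented by the object $0$, and the argument goes through uniformly.

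I expect no genuine obstacle. The mild subtleties are making sure the induction for iterated direct sums is justified, which holds because direct sums are well defined up to isomorphism and the generators depend only on isomorphism classes, and handling the empty sums as above.
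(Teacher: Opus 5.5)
Your proposal is correct and is essentially the paper's own proof: write an arbitrary element as a sum of generators minus another sum of generators, then collapse each sum into the class of a single direct sum using \cref{lem: direct sum}. Your use of the zero object, with $[0]=0$, to cover an empty sum correctly fills in a degenerate case that the paper leaves implicit.
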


\begin{proof}
By definition, an arbitrary element of $K_0(\mathcal C)$ has the form
\[
[B_1]+\cdots+[B_k]-[C_1]-\cdots-[C_\ell].
\]
Using the previous lemma repeatedly, this expression becomes
\[
[B_1\oplus\cdots\oplus B_k]
-
[C_1\oplus\cdots\oplus C_\ell],
\]
which proves the claim.
\end{proof}

As with many constructions in algebra, the true significance of the Grothendieck group is captured by its universal property.

\begin{prop}\label{prop: uprop of K0}
Let $G$ be an abelian group, and let
\[
\delta:\mathcal C\to G
\]
be a function satisfying the following conditions:
\begin{enumerate}[label=$(\arabic*)$]
    \item $\delta(A)=\delta(A')$ whenever $A\cong A'$,
    \item for every short exact sequence
    \[
    0\longrightarrow A\longrightarrow B\longrightarrow C\longrightarrow 0,
    \]
    one has
    \[
    \delta(B)=\delta(A)+\delta(C).
    \]
\end{enumerate}
Then there exists a unique group homomorphism
\[
\widetilde{\delta}:K_0(\mathcal C)\to G
\]
such that
\[
\widetilde{\delta}([A])=\delta(A)
\]
for every object $A\in\mathcal C$.
\end{prop}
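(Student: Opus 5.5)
The plan is to realize $K_0(\mathcal C)$ explicitly as a quotient of a free abelian group and then invoke the universal property of free abelian groups together with the description of homomorphisms out of a quotient. Concretely, let $F$ denote the free abelian group on the set of isomorphism classes of objects of $\mathcal C$, with basis elements written $\langle A\rangle$. Let $R\subseteq F$ be the subgroup generated by all elements $\langle B\rangle-\langle A\rangle-\langle C\rangle$, one for each short exact sequence $0\to A\to B\to C\to 0$. By definition $K_0(\mathcal C)=F/R$, and $[A]$ is the image of $\langle A\rangle$.

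\emph{Existence.} Condition $(1)$ says that $\delta$ descends to a well-defined function on isomorphism classes. By the universal property of free abelian groups, there is a unique homomorphism $\Delta:F\to G$ with $\Delta(\langle A\rangle)=\delta(A)$. Condition $(2)$ gives
\[
\Delta\bigl(\langle B\rangle-\langle A\rangle-\langle C\rangle\bigr)=\delta(B)-\delta(A)-\delta(C)=0,
\]
so $\Delta$ kills every generator of $R$, hence all of $R$. Therefore $\Delta$ factors through a homomorphism $\widetilde\delta:F/R=K_0(\mathcal C)\to G$ with $\widetilde\delta([A])=\delta(A)$.

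\emph{Uniqueness.} Any homomorphism out of $K_0(\mathcal C)$ is determined by its values on the generators $[A]$. Alternatively, by \cref{lem: difference} every element is of the form $[B]-[C]$, so a homomorphism satisfying $\widetilde\delta([A])=\delta(A)$ must send $[B]-[C]$ to $\delta(B)-\delta(C)$. Hence any two such homomorphisms agree.

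There is no genuine obstacle here. The only point needing care is the well-definedness step. One must check that $\delta$ is a function of isomorphism classes, which is exactly hypothesis $(1)$, before extending it to $F$. One should also note a set-theoretic caveat: the isomorphism classes must form a set, meaning $\mathcal C$ is essentially small. This is implicitly assumed in the definition of $K_0(\mathcal C)$.
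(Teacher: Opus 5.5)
Your proposal is correct and matches the paper's proof: extend $\delta$ to the free abelian group on isomorphism classes using condition (1), note by condition (2) that the relation subgroup lies in the kernel, and factor through the quotient $K_0(\mathcal C)$. Your explicit uniqueness argument (a homomorphism is determined by its values on the generators $[A]$) and your essential-smallness caveat are minor additions that the paper leaves implicit.
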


\begin{proof}
Let $F(\mathcal C)$ denote the free abelian group generated by isomorphism classes of objects of $\mathcal C$. Since $\delta$ is invariant under isomorphism, it extends uniquely to a homomorphism
\[
F(\mathcal C)\to G.
\]

Now let $W\subset F(\mathcal C)$ be the subgroup generated by elements of the form
\[
[B]-[A]-[C]
\]
coming from short exact sequences
\[
0\to A\to B\to C\to 0.
\]
Condition (2) implies that every such generator lies in the kernel of the extended map, so
\[
W\subseteq \ker(\delta).
\]

Consequently, $\delta$ factors uniquely through the quotient
\[
K_0(\mathcal C)=F(\mathcal C)/W,
\]
yielding the desired homomorphism
\[
\widetilde{\delta}:K_0(\mathcal C)\to G.
\qedhere
\]
\end{proof}

This universal property explains why Grothendieck groups arise so naturally throughout mathematics: any additive invariant factors canonically through $K_0$. In this sense, $K_0(\mathcal C)$ is the universal additive invariant of the category $\mathcal C$.

We now turn to some concrete computations illustrating how Grothendieck groups recover familiar algebraic invariants. Let
\[
R\textsf{-Free}^{fg}
\]
denote the category of finitely generated free $R$-modules over an integral domain $R$. When $R=k$ is a field, this is precisely the category $k\textsf{-Vect}^f$.

\begin{prop}\label{prop: K0 of R-Free}
Let $R$ be an integral domain. Then
\[
K_0(R\textsf{\emph{-Free}}^{fg})\cong \mathbb Z.
\]
\end{prop}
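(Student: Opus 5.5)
We will realize the isomorphism through rank. Over an integral domain $R$ with fraction field $F$, a finitely generated free module $R^n$ has well-defined rank $n$. The reason is that $R^n\otimes_R F\cong F^n$, and dimension over a field is an invariant. Define
\[
\delta(M)=\dim_F(M\otimes_R F).
\]
This is clearly invariant under isomorphism, so condition (1) of \cref{prop: uprop of K0} holds.

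For condition (2), take a short exact sequence $0\to A\to B\to C\to 0$ of finitely generated free modules. Since $C$ is free, hence projective, the sequence splits, so $B\cong A\oplus C$. Tensoring with $F$ preserves direct sums, and then additivity of dimension gives $\delta(B)=\delta(A)+\delta(C)$. Alternatively, $F$ is a flat $R$-module, being a localization, so tensoring preserves exactness and additivity of dimension for short exact sequences of vector spaces applies directly. Either way, \cref{prop: uprop of K0} produces a homomorphism $\widetilde\delta:K_0(R\textsf{-Free}^{fg})\to\mathbb Z$ with $\widetilde\delta([R^n])=n$.

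Surjectivity is immediate, since $\widetilde\delta([R])=1$. For injectivity, every object is isomorphic to some $R^n$, and by \cref{lem: direct sum} we have $[R^n]=n[R]$. Hence $K_0$ is generated by $[R]$, so it is cyclic. A cyclic group surjecting onto $\mathbb Z$ must be infinite cyclic, and the map is then an isomorphism. One can also argue via \cref{lem: difference}: any element is $[R^m]-[R^n]=(m-n)[R]$, and it maps to $m-n$.

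The main subtlety is the well-definedness of rank over a domain, i.e.\ that $R^m\cong R^n$ forces $m=n$. This is handled by passing to the fraction field as above. A secondary point is that $R\textsf{-Free}^{fg}$ is not abelian, since cokernels of free modules need not be free. We therefore interpret its short exact sequences as sequences of free modules that are exact in $R$-modules. As noted, these all split, so the definition of $K_0$ and \cref{prop: uprop of K0} apply verbatim.
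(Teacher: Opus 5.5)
Your proof is correct and follows essentially the same route as the paper. Both arguments show that rank, computed via the fraction field, is additive on short exact sequences, then use the universal property to get $\widetilde\delta$, surjectivity from $[R]\mapsto 1$, and injectivity from the fact that every class reduces to classes of free modules of known rank. The differences are cosmetic: the paper proves additivity by extending scalars to the fraction field and applying rank--nullity rather than by splitting or flatness, and it proves injectivity by writing $x=[B]-[C]$ with $\mathrm{rk}(B)=\mathrm{rk}(C)$ and invoking IBN rather than by observing that $K_0$ is cyclic on $[R]$.
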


Thus, for finitely generated free modules, the Grothendieck group recovers nothing more and nothing less than rank. Before proving the proposition, we first need to establish the additivity of rank in short exact sequences.

\begin{lemma}\label{lem: additivity of rank for free}
Let $R$ be an integral domain, and let
\[
0\longrightarrow U\longrightarrow V\longrightarrow W\longrightarrow 0
\]
be a short exact sequence of finitely generated free $R$-modules. Then
\[
\mathrm{rk}(V)=\mathrm{rk}(U)+\mathrm{rk}(W).
\]
\end{lemma}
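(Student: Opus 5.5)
The natural approach is to reduce to linear algebra over the fraction field. Let $K=\operatorname{Frac}(R)$, which exists because $R$ is an integral domain. For a finitely generated free module $F\cong R^n$ we have $K\otimes_R F\cong K^n$, so $\dim_K(K\otimes_R F)=n$. This has two consequences. First, the rank of a free module is well defined. Second, we can compute ranks as dimensions of $K$-vector spaces. The proof therefore reduces to showing that the sequence
\[
0\longrightarrow K\otimes_R U\longrightarrow K\otimes_R V\longrightarrow K\otimes_R W\longrightarrow 0
\]
is exact, and then applying the rank--nullity theorem over the field $K$.

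There are two ways to get exactness after tensoring with $K$.

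\textbf{Route 1: localization.} $K$ is a flat $R$-module, being the localization $S^{-1}R$ with $S=R\setminus\{0\}$. Hence $K\otimes_R-$ preserves exact sequences.

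\textbf{Route 2: splitting.} This route avoids flatness entirely and is more elementary. Since $W$ is free, the surjection $V\to W$ splits. Concretely, lift a basis of $W$ to elements of $V$; these lifts define a section $s\colon W\to V$. The splitting gives $V\cong U\oplus W$. Tensor product commutes with finite direct sums, so
\[
K\otimes_R V\cong (K\otimes_R U)\oplus(K\otimes_R W).
\]
Taking $K$-dimensions yields $\mathrm{rk}(V)=\mathrm{rk}(U)+\mathrm{rk}(W)$ directly.

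I would present the splitting argument, since it is more self-contained. I would mention flatness only as a remark.

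The key steps, in order, are:
\begin{enumerate}[label=(\roman*)]
\item Construct the section $s$ from a basis of $W$.
\item Verify that $U\oplus W\to V$, $(u,w)\mapsto i(u)+s(w)$, is an isomorphism. Injectivity and surjectivity are a routine diagram chase using exactness.
\item Establish that the rank of a free $R$-module is well defined via $K\otimes_R R^n\cong K^n$, together with the invariance of dimension for vector spaces.
\item Conclude additivity of rank.
\end{enumerate}

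The main obstacle is step (iii). Over an arbitrary ring, $R^m\cong R^n$ need not force $m=n$, so the well-definedness of $\mathrm{rk}$ is exactly where the integral domain hypothesis enters. The plan is to handle it by base change to $K$: an isomorphism $R^m\cong R^n$ induces $K^m\cong K^n$, hence $m=n$. Note that this only requires $R$ to be nonzero commutative. One could equally pass to $R/\mathfrak m$ for a maximal ideal $\mathfrak m$, but the fraction field is the choice that fits the stated hypothesis.
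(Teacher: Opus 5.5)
Your proof is correct, but the route you chose to present differs from the paper's. The paper follows essentially your Route 1, done by hand. It extends $\alpha\colon V\to W$ to a $K$-linear map $\overline\alpha\colon K^{\oplus\mathrm{rk}(V)}\to K^{\oplus\mathrm{rk}(W)}$. It then checks by clearing denominators that $\operatorname{im}(\overline\alpha)=\overline W$ and $\ker(\overline\alpha)=\overline U$, so exactness of the base-changed sequence is verified directly rather than deduced from flatness. Finally it applies rank--nullity over $K$.

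Your splitting argument instead uses the freeness of $W$ to write $V\cong U\oplus W\cong R^{\oplus(\mathrm{rk}(U)+\mathrm{rk}(W))}$. Additivity then reduces entirely to the well-definedness of rank, and the fraction field is needed only for that. This has three advantages:
\begin{enumerate}[label=(\roman*)]
\item The proof is shorter, with no rank--nullity and no exactness check.
\item It makes explicit the invariance of rank. The paper uses this tacitly here (it writes $U\cong R^{\oplus\mathrm{rk}(U)}$) and only names it as the IBN property later, in the proof of the proposition computing $K_0(R\textsf{-Free}^{fg})$.
\item It shows the lemma holds over any nonzero commutative ring, not just a domain.
\end{enumerate}

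What the paper's approach buys in return is independence from splitting. Base change to $K$ preserves exactness of every short exact sequence. So the same argument gives additivity of rank for arbitrary finitely generated modules over a domain. There, sequences such as $0\to 2\mathbb Z\to\mathbb Z\to\mathbb Z/2\mathbb Z\to 0$ do not split, and your method does not apply.
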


\begin{proof}
Let $\alpha:V\to W$ denote the quotient map. Since the sequence is exact, we may identify
\[
U=\ker(\alpha)
\qquad\text{and}\qquad
W=\operatorname{im}(\alpha).
\]

Because $U$, $V$, and $W$ are free of finite rank, we may write
\[
U\cong R^{\oplus \mathrm{rk}(U)}, \qquad
V\cong R^{\oplus \mathrm{rk}(V)}, \qquad
W\cong R^{\oplus \mathrm{rk}(W)}.
\]

To reduce the problem to linear algebra, let $K$ denote the field of fractions of $R$. Extending scalars from $R$ to $K$, define
\[
\overline U=K^{\oplus \mathrm{rk}(U)}, \qquad
\overline V=K^{\oplus \mathrm{rk}(V)}, \qquad
\overline W=K^{\oplus \mathrm{rk}(W)}.
\]

The map $\alpha$ extends uniquely to a $K$-linear map
\[
\overline\alpha:\overline V\to \overline W.
\]

Since every element of $\overline W$ is a scalar multiple of an element of $W$, and since $\operatorname{im}(\alpha)=W$, it follows that
\[
\operatorname{im}(\overline\alpha)=\overline W.
\]
Similarly,
\[
\ker(\overline\alpha)=\overline U.
\]

We may therefore apply the rank--nullity theorem for vector spaces (Proposition VI.3.11) to obtain
\[
\dim(\overline V)
=
\dim(\overline U)
+
\dim(\overline W).
\]

Finally, by construction,
\[
\dim(\overline U)=\mathrm{rk}(U),
\qquad
\dim(\overline V)=\mathrm{rk}(V),
\qquad
\dim(\overline W)=\mathrm{rk}(W),
\]
and the result follows.
\end{proof}

We can now prove the proposition.

\begin{proof}[Proof of \cref{prop: K0 of R-Free}]
Define a function
\[
\delta:R\textsf{-Free}^{fg}\to \mathbb Z
\]
by
\[
\delta(V)=\mathrm{rk}(V).
\]

By the previous lemma, rank is additive on short exact sequences. Hence, by the universal property of the Grothendieck group, $\delta$ induces a homomorphism
\[
\widetilde\delta:
K_0(R\textsf{-Free}^{fg})
\longrightarrow
\mathbb Z.
\]

Since $\mathrm{rk}(R)=1$, the class $[R]$ maps to $1$, so $\widetilde\delta$ is surjective.

To prove injectivity, let
\[
x\in K_0(R\textsf{-Free}^{fg})
\]
lie in the kernel of $\widetilde\delta$. By Lemma 1.3, we may write
\[
x=[B]-[C]
\]
for some finitely generated free $R$-modules $B$ and $C$. Since $\widetilde\delta(x)=0$, we have
\[
\mathrm{rk}(B)=\mathrm{rk}(C).
\]

Now integral domains satisfy the invariant basis number (IBN) property, so free modules of the same finite rank are isomorphic. Thus
\[
B\cong C,
\]
which implies $[B]-[C]=0$ in $K_0(R\textsf{-Free}^{fg})$.
It follows that $\widetilde\delta$ is injective, and hence an isomorphism.
\end{proof}

The proposition shows that $K_0$ extracts precisely the information of rank from the category of finitely generated free modules. In this setting, rank completely determines the isomorphism class of an object, so the Grothendieck group loses essentially no information.

\begin{ex}\label{ex: countable rank}
Suppose instead that we consider the category
\[
R\textsf{-Free}^c
\]
of countably generated free $R$-modules. Then the corresponding Grothendieck group is trivial.
Indeed, if $A$ is any countable set, then
\[
R^{\oplus A}\oplus R^{\oplus \mathbb Z}
\cong
R^{\oplus \mathbb Z}.
\]
Passing to $K_0$, we obtain
\[
[R^{\oplus A}]
+
[R^{\oplus \mathbb Z}]
=
[R^{\oplus \mathbb Z}],
\]
and hence
\[
[R^{\oplus A}]=0.
\]

Since every object of $R\textsf{-Free}^c$ is isomorphic to some $R^{\oplus A}$, it follows that
\[
K_0(R\textsf{-Free}^c)=0.
\]
\end{ex}

This example illustrates an important subtlety: the Grothendieck construction is highly sensitive to the category under consideration.

We now enlarge the category under consideration. Instead of restricting ourselves to free modules, let us consider all finitely generated modules over a principal ideal domain.

Remarkably, the resulting Grothendieck group is still $\mathbb Z$.

\begin{prop}\label{prop: K0 for PID}
Let $R$ be a PID. Then
\[
K_0(R\textsf{\emph{-Mod}}^{fg})
\cong
\mathbb Z.
\]
\end{prop}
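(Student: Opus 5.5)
The plan is to use rank as the additive invariant and apply the universal property of $K_0$ from \cref{prop: uprop of K0}. Let $K$ be the field of fractions of $R$, and for a finitely generated $R$-module $M$ set
\[
\delta(M)=\dim_K(M\otimes_R K).
\]
This is clearly invariant under isomorphism. It is additive on short exact sequences because $K$ is a localization of $R$ and hence flat over $R$. Concretely, tensoring $0\to A\to B\to C\to 0$ with $K$ gives a short exact sequence of finite-dimensional $K$-vector spaces, and rank--nullity (Proposition VI.3.11) yields $\delta(B)=\delta(A)+\delta(C)$. If one prefers to avoid the word ``flat,'' one can check directly that for $a\in A$ with $a/s\mapsto 0$ in $B\otimes K$ we get $ta=0$ in $B$ for some $t\neq 0$, so $ta=0$ in $A$. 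By \cref{prop: uprop of K0} we therefore obtain a homomorphism $\widetilde\delta:K_0(R\textsf{-Mod}^{fg})\to\mathbb Z$. It is surjective since $\widetilde\delta([R])=1$.

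Injectivity comes from the structure theorem for finitely generated modules over a PID. Every finitely generated $M$ decomposes as
\[
M\cong R^{r}\oplus R/(a_1)\oplus\cdots\oplus R/(a_n)
\]
with each $a_i$ nonzero, and $\delta(M)=r$. By \cref{lem: direct sum}, $[M]=r[R]+\sum_i[R/(a_i)]$. For each nonzero $a$, the short exact sequence
\[
0\longrightarrow R\xrightarrow{\;\cdot a\;}R\longrightarrow R/(a)\longrightarrow 0
\]
is exact on the left because $R$ is a domain, and it gives $[R]=[R]+[R/(a)]$, so $[R/(a)]=0$. Hence $[M]=\delta(M)[R]$ for every $M$. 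It follows that $K_0$ is generated by $[R]$ and that $x\mapsto \widetilde\delta(x)[R]$ inverts $\widetilde\delta$. Equivalently, by \cref{lem: difference}, write $x=[B]-[C]$; if $\widetilde\delta(x)=0$ then $x=(\mathrm{rk}B-\mathrm{rk}C)[R]=0$.

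The main obstacle is the additivity of $\delta$ on arbitrary short exact sequences. Unlike \cref{lem: additivity of rank for free}, the modules here may have torsion, so we cannot identify them with $R^{\oplus n}$ and pass to $K^{\oplus n}$. Exactness of $-\otimes_R K$, i.e.\ the flatness of the fraction field, is the essential input. The torsion-killing computation and the appeal to the structure theorem are routine by comparison.
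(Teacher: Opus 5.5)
Your proof is correct and has the same skeleton as the paper's: rank plus the universal property (\cref{prop: uprop of K0}) gives $\widetilde\delta$, and $[R]\mapsto 1$ gives surjectivity. Injectivity comes from the structure theorem together with $[R/(a)]=0$, which follows from $0\to R\xrightarrow{\cdot a}R\to R/(a)\to 0$; the paper writes the kernel as the ideal $(a)\cong R$.

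The one genuine difference is how you prove additivity of rank. The paper (\cref{lem: additivity of rank general}) defines rank as the maximal size of a linearly independent subset. It proves additivity by lifting an independent set from $C$ and showing that any $k+\ell+1$ elements of $B$ are dependent. You instead define $\delta(M)=\dim_K(M\otimes_R K)$ and get additivity from exactness of localization plus rank--nullity.

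The paper's route stays elementary and never uses tensor products. Yours is shorter, and it makes precise the extension-of-scalars idea that the paper used only for free modules in \cref{lem: additivity of rank for free}. It also avoids the fiddly counting argument, which as written in the paper has slips: $m<\ell$ should be $m\le\ell$, and one gets at least $k+1$ (not $\ell+1$) elements $\beta_i'$ in $A$, which is what contradicts $\mathrm{rk}(A)=k$.
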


The proof again rests on the additivity of rank.

\begin{lemma}\label{lem: additivity of rank general}
Let $R$ be an integral domain, and let
\[
0\longrightarrow A\longrightarrow B\longrightarrow C\longrightarrow 0
\]
be a short exact sequence of finitely generated $R$-modules. Then
\[
\mathrm{rk}(B)=\mathrm{rk}(A)+\mathrm{rk}(C).
\]
\end{lemma}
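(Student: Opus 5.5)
The plan is to mirror the proof of \cref{lem: additivity of rank for free} by passing to the field of fractions. The difference is that $A$, $B$, $C$ need not be free, so the rank must first be pinned down. Let $K$ be the field of fractions of $R$. For a finitely generated $R$-module $M$, I will take
\[
\mathrm{rk}(M)=\dim_K(K\otimes_R M),
\]
the maximal number of $R$-linearly independent elements of $M$. When $M\cong R^{\oplus n}$, this agrees with the rank used earlier, since $K\otimes_R R^{\oplus n}\cong K^{\oplus n}$. The dimension is finite because $M$ is finitely generated, so $K\otimes_R M$ is spanned by the images of finitely many generators.

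The argument reduces to one fact: the functor $K\otimes_R -$ sends the short exact sequence
\[
0\longrightarrow A\longrightarrow B\longrightarrow C\longrightarrow 0
\]
to a short exact sequence of finite-dimensional $K$-vector spaces
\[
0\longrightarrow K\otimes_R A\longrightarrow K\otimes_R B\longrightarrow K\otimes_R C\longrightarrow 0.
\]
Once this is known, rank--nullity (Proposition VI.3.11) applied to the map $K\otimes_R B\to K\otimes_R C$ gives
\[
\dim(K\otimes_R B)=\dim(K\otimes_R A)+\dim(K\otimes_R C),
\]
which is exactly the claim.

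To avoid invoking flatness abstractly, I would identify $K\otimes_R M$ with the localization $S^{-1}M$, where $S=R\setminus\{0\}$. Its elements are fractions $m/s$, and $m/s=0$ if and only if $tm=0$ for some $t\in S$. Exactness of $S^{-1}$ is then checked by hand:
\begin{enumerate}
\item \emph{Surjectivity} at $S^{-1}C$ is immediate: lift the numerator of a fraction and keep the denominator.
\item \emph{Exactness in the middle}: if $b/s\mapsto 0$, then $t\beta(b)=0$ for some $t\in S$. So $tb\in\ker\beta=\operatorname{im}\alpha$, say $tb=\alpha(a)$, and then $b/s=\alpha(a)/(ts)$.
\item \emph{Injectivity} of $S^{-1}A\to S^{-1}B$: if $\alpha(a)/s=0$, then $t\alpha(a)=\alpha(ta)=0$ for some $t\in S$. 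Since $\alpha$ is injective, $ta=0$, so $a/s=0$ in $S^{-1}A$.
\end{enumerate}

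The main obstacle is the injectivity step, item 3. It is the only place where the argument for free modules genuinely changes, because torsion elements of $A$ become zero after localizing. The point is that they die in $A$ and in $B$ compatibly, and the computation above shows this. A secondary task is recording that $\dim_K(S^{-1}M)$ equals the maximal number of independent elements and equals $n$ for $R^{\oplus n}$, so that this lemma really extends \cref{lem: additivity of rank for free}. For a PID, where \cref{prop: K0 for PID} will use it, this rank is the free rank in the structure theorem, because torsion summands localize to $0$.
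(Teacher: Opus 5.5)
Your proof is correct, but it takes a different route from the paper's. The paper never leaves $R$. Reading $\mathrm{rk}(M)$ as the largest size of an $R$-linearly independent subset, it first lifts an independent set of $C=B/A$ and adjoins an independent set of $A$, which gives $\mathrm{rk}(B)\ge\mathrm{rk}(A)+\mathrm{rk}(C)$. It then shows that any $\mathrm{rk}(A)+\mathrm{rk}(C)+1$ elements of $B$ are dependent: it reduces them modulo a maximal independent subfamily of their images in $C$, which pushes the remaining elements into $A$. You instead take the fraction-field argument the paper uses only in the free case (\cref{lem: additivity of rank for free}) and apply it to arbitrary finitely generated modules: localize at $S=R\setminus\{0\}$, check exactness of $S^{-1}$ by hand, and apply rank--nullity. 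Your route gives a shorter, more conceptual proof. It makes transparent why torsion is invisible to rank, and it yields finiteness of the rank at no extra cost. It also sidesteps the one delicate point of the elementwise argument. Over a domain that is not a field, maximality of $\{[\beta_1],\dots,[\beta_m]\}$ only gives a relation $r_i[\beta_i]=\sum_j b_{i,j}[\beta_j]$ with $r_i\neq 0$, so one must carry the scalar $r_i$ and use that $R$ has no zero divisors. The paper's write-up omits this scalar; it should also read $m\le\ell$ and ``at least $\mathrm{rk}(A)+1$ elements of $A$.'' The cost of your route is that it proves the lemma for $\mathrm{rk}(M):=\dim_K S^{-1}M$. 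To match the paper's notion of rank, you should carry out the clearing-denominators check you defer. First, $m_1,\dots,m_r\in M$ are $R$-independent if and only if $m_1/1,\dots,m_r/1$ are $K$-independent. Second, every element of $S^{-1}M$ is a $K$-multiple of some $m/1$, so a $K$-basis can be chosen among such elements. Together these show the two numbers agree.
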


\begin{proof}
We identify $A$ with a submodule of $B$ and $C$ with $B/A$.

There exists a linearly independent set $\{\alpha_1,\dots, \alpha_k\}$ in $A$, where $k=\mathrm{rk}(A)$, and a linearly independent set $\{[\gamma_1],\dots,[\gamma_\ell]\}$ in $C$, where $\ell = \mathrm{rk}(C)$ (here $[\gamma_i]$ denotes the coset of $\gamma_i$).

We claim that $\{\alpha_1,\dots,\alpha_k, \gamma_1,\dots, \gamma_\ell\}$ is linearly independent set in $B$. Indeed, suppose $a_1,\dots, a_k,c_1,\dots, c_\ell\in R$ and
\[\sum_{i=1}^k a_i\alpha_i + \sum_{j=1}^\ell c_i\gamma_i = 0.\]
Projecting onto $C = B/A$, it follows that
\[\sum_{j=1}^\ell c_i [\gamma_i] = 0\quad \text{in $C$},\]
so by linear independence all the $c_i$ equal zero. Hence
\[\sum_{i=1}^k a_i \alpha_i = 0 \quad \text{in $A\subset B$},\]
so all the $a_i$ equal zero. Hence $\{\alpha_1,\dots,\alpha_k, \gamma_1,\dots, \gamma_\ell\}$ is linearly independent in $B$, proving that $\mathrm{rk} B \ge k + \ell$. Now suppose that $\{\beta_1,\dots, \beta_{k+\ell+1}\}$ is a subset of $B$. We prove that it is linearly dependent. 

First of all, consider the projection on $C$ of this set. It must have a maximal linearly independent set, which without loss of generality we can take to be $\{[\beta_1],\dots,[\beta_m]\}$. Since $\ell = \mathrm{rk}(C)$, it follows that $m<\ell$. By maximality of this set, it follows that for each $i>m$, there exists $b_{i,1}\dots, b_{i,m}\in R$ such that
\[[\beta_i] - \sum_{j=1}^m b_{i,j}[\beta_j] = 0 \quad \text{in $C$}.\]
Hence
\[\beta_i' := \beta_i - \sum_{j=1}^m b_{i,j}\beta_j \in A,\]
so we get a set $\{\beta_{m+1}',\dots, \beta_{k+\ell+1}'\}$ of at least $\ell+1$ elements of $A$. This means that some nontrivial linear combination of them must equal zero. This extends to a nontrivial linear combination of the $\beta_i$ (since $\beta_i'$ is the only one with a nonzero $\beta_i$ coefficient). It follows that $\mathrm{rk} B< k+\ell +1$, so  $\mathrm{rk} B= k + \ell$. We are done.
\end{proof}

We are now ready to compute the Grothendieck group.

\begin{proof}[Proof of Proposition 1.8]
As before, rank defines a function
\[
\mathrm{rk}:R\textsf{-Mod}^{fg}\to \mathbb Z
\]
which is additive on short exact sequences by the preceding lemma. Hence, the universal property of $K_0$ yields a homomorphism
\[
\widetilde\delta:
K_0(R\textsf{-Mod}^{fg})
\to
\mathbb Z.
\]

Since $[R]$ maps to $1$, the map is surjective.
To prove injectivity, let
\[
x=[B]-[C]
\]
lie in the kernel of $\widetilde\delta$. Then
\[
\mathrm{rk}(B)=\mathrm{rk}(C).
\]

By the structure theorem for finitely generated modules over a PID,
\[
B
\cong
R^{\oplus \mathrm{rk}(B)}
\oplus
\frac{R}{(b_1)}
\oplus\cdots\oplus
\frac{R}{(b_m)}
\]
for suitable nonzero elements $b_1,\dots,b_m\in R$.

Now each ideal $(b_i)$ is isomorphic to $R$ as an $R$-module, so from the short exact sequence
\[
0\longrightarrow (b_i)\longrightarrow R\longrightarrow R/(b_i)\longrightarrow 0
\]
we obtain
\[
[R/(b_i)]=[R]-[(b_i)]=0.
\]

Using additivity, it follows that
\[
[B]=[R^{\oplus \mathrm{rk}(B)}].
\]
Similarly,
\[
[C]=[R^{\oplus \mathrm{rk}(C)}].
\]

Since $\mathrm{rk}(B)=\mathrm{rk}(C)$, we conclude $[B]=[C]$, and hence $x=0$.
Thus $\widetilde\delta$ is an isomorphism.
\end{proof}

\begin{rem}\label{rem: PID is neccasary}
    The reason we cannot say that $K_0(R\textsf{-Mod}^f)\cong \mathbb Z$ for arbitrary integral domains is that even though we have additivity of rank, we do not have the direct sum decomposition into the torsion part and the free part, like we have in a PID.
\end{rem}

This example illustrates a fundamentally different phenomenon from the free-module case. For finitely generated free modules, rank completely determines the module up to isomorphism. In contrast, finitely generated modules over a PID possess rich torsion structure invisible to rank alone.

The Grothendieck group therefore performs a genuine simplification: it collapses all torsion information while retaining only the additive invariant given by rank. In this sense, $K_0$ extracts the ``size'' of a module while discarding its finer internal structure.

Finally, since
\[
\mathsf{Ab}^{fg}\cong \mathbb Z\textsf{-Mod}^{fg},
\]
we immediately obtain the following corollary.

\begin{cor}\label{cor: ABfg}
We have \[
K_0(\mathsf{Ab}^{fg})\cong \mathbb Z.
\]
\end{cor}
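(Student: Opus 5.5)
The plan is to deduce the corollary directly from \cref{prop: K0 for PID}, applied with $R=\mathbb Z$, together with the observation that $K_0$ is invariant under equivalence of abelian categories.

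First, $\mathbb Z$ is a PID. Hence \cref{prop: K0 for PID} gives
\[
K_0(\mathbb Z\textsf{-Mod}^{fg})\cong \mathbb Z,
\]
with the isomorphism induced by rank.

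Second, there is the standard identification of abelian groups with $\mathbb Z$-modules. Each abelian group carries a unique $\mathbb Z$-module structure, namely $n\cdot a = a+\cdots+a$ together with its signed analogue. Every group homomorphism is automatically $\mathbb Z$-linear. Finite generation as a group coincides with finite generation as a $\mathbb Z$-module. This yields an isomorphism (in fact an identity) of categories
\[
\mathsf{Ab}^{fg}\cong \mathbb Z\textsf{-Mod}^{fg}.
\]

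Third, I would note that any isomorphism, or more generally any equivalence, $F:\mathcal C\to\mathcal D$ of abelian categories induces an isomorphism $K_0(\mathcal C)\cong K_0(\mathcal D)$. The argument runs as follows. An equivalence is exact, so it preserves isomorphism classes and short exact sequences. By \cref{prop: uprop of K0}, the assignment $A\mapsto [F(A)]$ therefore descends to a homomorphism $K_0(\mathcal C)\to K_0(\mathcal D)$. A quasi-inverse $G$ gives a homomorphism in the other direction. The two composites send $[A]$ to $[GF(A)]=[A]$, and likewise on the other side. Since the classes $[A]$ generate, the uniqueness part of \cref{prop: uprop of K0} shows that both composites are the identity.

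Combining the three steps gives $K_0(\mathsf{Ab}^{fg})\cong\mathbb Z$. Explicitly, the class of a finitely generated abelian group is sent to its rank, $\dim_{\mathbb Q}(A\otimes\mathbb Q)$. None of the steps is a real obstacle. The only point deserving a sentence is the third step, which makes precise that ``$\cong$'' of categories transports short exact sequences. This is immediate here because the identification is literally the identity on underlying sets and maps, so exactness is unchanged.
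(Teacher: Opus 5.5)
Your proposal is correct and follows the paper's argument: the paper applies \cref{prop: K0 for PID} with $R=\mathbb Z$ through the identification $\mathsf{Ab}^{fg}\cong \mathbb Z\textsf{-Mod}^{fg}$. Your third step, which uses \cref{prop: uprop of K0} to check that an isomorphism or equivalence of abelian categories induces an isomorphism on $K_0$, spells out a point the paper leaves implicit.
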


\section{Jordan--H\"older Theory}

Up to this point, every Grothendieck group we have computed has turned out to be isomorphic to $\mathbb Z$. In each case, $K_0$ recovered a familiar numerical invariant such as dimension or rank. While these examples illustrate the mechanics of the construction, they do not yet reveal its full strength.

The situation becomes far more interesting when the underlying category contains genuinely nontrivial extension data. In such settings, Grothendieck groups encode how complicated objects are assembled from simpler ones. The key tool for understanding this phenomenon is the Jordan--H\"older theorem.

Let $R$ be a ring. Recall that a module is called \emph{simple} if it has no nontrivial proper submodules. Simple modules play the role of ``atomic'' objects in module theory: they cannot be decomposed any further.

\begin{defn}
A \emph{composition series} for an $R$-module $M$ is a chain of submodules
\[
M=M_0\supsetneq M_1\supsetneq \cdots \supsetneq M_m=0
\]
such that each quotient
\[
M_i/M_{i+1}
\]
is a simple $R$-module.
\end{defn}
The modules $M_i/M_{i+1}$ are called the \emph{composition factors} of the series, and the integer $m$ is called its \emph{length}.

Composition series should be viewed as analogues of prime factorizations: they decompose a complicated module into simple building blocks. A priori, however, it is not obvious that different composition series bear any relation to one another. The remarkable fact is that they do.

\begin{theorem}[Jordan--H\"older]
Any two composition series of a finite-length module have the same length and the same multiset of composition factors, up to isomorphism.
\end{theorem}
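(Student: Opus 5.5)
I will prove the theorem by strong induction on the minimal length $n(M)$ of a composition series of $M$; this is the classical argument. Say that $M$ has the \emph{Jordan--H\"older property} if any two composition series of $M$ have the same length and the same multiset of composition factors up to isomorphism. The claim is that every module admitting a composition series of length $n$ has this property, and I induct on $n$. The cases $n=0$ (where $M=0$) and $n=1$ (where $M$ is simple, so the only composition series is $M\supsetneq 0$) are immediate.

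For the inductive step, fix two composition series
\[
M=M_0\supsetneq M_1\supsetneq\cdots\supsetneq M_m=0,\qquad M=N_0\supsetneq N_1\supsetneq\cdots\supsetneq N_k=0,
\]
where the first has the minimal length $m=n(M)$. There are two cases.

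\textbf{Case $M_1=N_1$.} The truncated chains are composition series of $M_1$. Since $M_1$ has a composition series of length $m-1<m$, the induction hypothesis applies to it. Hence $m-1=k-1$ and the factor multisets agree. Adding the common factor $M/M_1$ finishes this case.

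\textbf{Case $M_1\neq N_1$.} Both $M_1$ and $N_1$ are maximal submodules, since $M/M_1$ and $M/N_1$ are simple. Therefore $M_1+N_1=M$. Put $L=M_1\cap N_1$. The second isomorphism theorem gives
\[
M_1/L\cong (M_1+N_1)/N_1=M/N_1,\qquad N_1/L\cong M/M_1,
\]
and both quotients are simple.

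I then need a composition series of $L$. The one step requiring care is showing that $L$ has finite length at all. I would handle this by intersecting the given series of $M_1$ with $L$: the chain $M_1\cap L\supseteq M_2\cap L\supseteq\cdots$ has successive quotients that embed in the simple modules $M_i/M_{i+1}$. So each quotient is either $0$ or simple, and deleting repetitions gives a composition series of $L$, say of length $r$.

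Now $M_1$ has two composition series: its given one, of length $m-1$, and $M_1\supsetneq L\supsetneq\cdots$, of length $r+1$. By induction these have the same length and factors, so $r+1=m-1$. Hence $N_1$ has a composition series $N_1\supsetneq L\supsetneq\cdots$ of length $m-1<m$, and the induction hypothesis applies to $N_1$ too. Comparing this series of $N_1$ with $N_1\supsetneq N_2\supsetneq\cdots$ gives $k-1=m-1$ and matching factors.

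Chaining these identifications, the factors of the $M$-series are
\[
\{M/M_1\}\cup\{M/N_1\}\cup\mathrm{JH}(L),
\]
and the factors of the $N$-series are
\[
\{M/N_1\}\cup\{M/M_1\}\cup\mathrm{JH}(L),
\]
using the two isomorphisms above. These multisets agree, which completes the induction.

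I expect the main obstacle to be organizing the induction correctly. Minimality of $m$ is what guarantees that both $M_1$ and $N_1$ have shorter composition series, and so fall under the inductive hypothesis. The finite-length argument for $L$ must also be made explicitly rather than assumed.
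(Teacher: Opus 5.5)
The paper gives no proof of this theorem. It quotes Jordan--H\"older as classical background, used only to make $\widetilde{\delta}$ well defined in the proof of Theorem~\ref{thm: uprop Jordan-Holder}. So there is no argument of the paper's to compare with, and your proof has to stand on its own. It does: this is the standard induction, and each step checks out.

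\begin{itemize}
\item The equality $M_1+N_1=M$ follows from maximality together with $M_1\neq N_1$.
\item The second-isomorphism identifications $M_1/L\cong M/N_1$ and $N_1/L\cong M/M_1$ are correct.
\item Intersecting the series of $M_1$ with $L$ does give a composition series of $L$. The kernel of $M_i\cap L\to M_i/M_{i+1}$ is $M_{i+1}\cap L$, so each subquotient embeds in a simple module.
\item You apply the inductive hypothesis first to $M_1$, to get $r+1=m-1$, and only then to $N_1$. That is exactly the right order.
\end{itemize}

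Two small remarks.
\begin{itemize}
\item What you actually show is that an arbitrary series is equivalent to one fixed series of minimal length. Any two series then agree by transitivity, and you should say so explicitly.
\item Minimality is not essential. If the induction is phrased as you first phrase it (``every module admitting a composition series of length $n$''), you can take that series as the first one and the same argument runs.
\end{itemize}

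The other usual route is Schreier refinement via the Zassenhaus lemma. That route gives the stronger statement that any two finite filtrations have equivalent refinements. Your induction is more elementary and proves exactly what the paper needs.

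Your argument uses only three ingredients:
\begin{itemize}
\item sums and intersections of subobjects;
\item the second isomorphism theorem;
\item the fact that a nonzero subobject of a simple object is the whole object.
\end{itemize}
It therefore goes through verbatim in any abelian category. That is precisely the extension the paper asserts without proof when it passes to finite-length abelian categories.
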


Thus, while a composition series itself is not unique, the collection of simple constituents appearing in it is uniquely determined. This suggests that the ``true'' additive information contained in a module is encoded by its simple composition factors.

Grothendieck groups formalize this intuition perfectly. Let
\[
R\textsf{-Mod}^f
\]
denote the category of finite-length $R$-modules.
The following theorem shows that every additive invariant of finite-length modules is completely determined by its values on simple modules.

\begin{theorem}\label{thm: uprop Jordan-Holder}
Let $G$ be an abelian group, and let $\delta$ be a function assigning an element of $G$ to every simple $R$-module. Then $\delta$ extends uniquely to a homomorphism
\[
\widetilde{\delta}:
K_0(R\textsf{\emph{-Mod}}^f)\to G.
\]
\end{theorem}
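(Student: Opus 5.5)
We will construct $\widetilde\delta$ from composition factors, check the defining relations of $K_0$, and then get uniqueness from the fact that every class in $K_0$ is a sum of classes of simple modules.

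\textbf{Step 1: define $\delta$ on all finite-length modules.} Let $M$ have finite length. Choose a composition series
\[
M=M_0\supsetneq M_1\supsetneq\cdots\supsetneq M_m=0
\]
and set
\[
\delta(M)=\sum_{i=0}^{m-1}\delta(M_i/M_{i+1}).
\]
Here $\delta$ is implicitly constant on isomorphism classes of simple modules. By the Jordan--H\"older theorem, the multiset of composition factors up to isomorphism does not depend on the chosen series. So $\delta(M)$ is well defined.

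If $M\cong M'$, an isomorphism carries a composition series of $M$ to one of $M'$ with isomorphic factors. Hence condition (1) of \cref{prop: uprop of K0} holds.

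For a simple module $S$, the series $S\supsetneq 0$ gives $\delta(S)$ back. So this really extends the given function.

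\textbf{Step 2: additivity on short exact sequences.} This is the main step. Take
\[
0\to A\xrightarrow{f} B\xrightarrow{g} C\to 0
\]
with $B$ of finite length; then $A$ and $C$ also have finite length. Choose composition series
\[
A=A_0\supsetneq\cdots\supsetneq A_k=0,\qquad C=C_0\supsetneq\cdots\supsetneq C_\ell=0.
\]
Splice them into a chain in $B$:
\[
B=g^{-1}(C_0)\supsetneq g^{-1}(C_1)\supsetneq\cdots\supsetneq g^{-1}(C_\ell)=f(A)=f(A_0)\supsetneq f(A_1)\supsetneq\cdots\supsetneq f(A_k)=0.
\]
We must check that the factors are simple and are the expected ones.
\begin{itemize}
\item By the third isomorphism theorem, $g^{-1}(C_j)/g^{-1}(C_{j+1})\cong C_j/C_{j+1}$, since $g$ is surjective with kernel $f(A)$ contained in both preimages.
\item Since $f$ is injective, $f(A_i)/f(A_{i+1})\cong A_i/A_{i+1}$.
\end{itemize}
So this is a composition series of $B$, and its factors are exactly those of $A$ together with those of $C$. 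Computing $\delta(B)$ with this series (legitimate by Step 1) gives
\[
\delta(B)=\delta(A)+\delta(C).
\]
Now \cref{prop: uprop of K0} yields a homomorphism $\widetilde\delta:K_0(R\textsf{-Mod}^f)\to G$ with $\widetilde\delta([S])=\delta(S)$ for every simple $S$.

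\textbf{Step 3: uniqueness.} We show the classes $[S]$ of simple modules generate $K_0(R\textsf{-Mod}^f)$. Induct on the length $m$ of a composition series of $M$. The short exact sequence
\[
0\to M_1\to M\to M/M_1\to 0
\]
gives $[M]=[M_1]+[M_0/M_1]$. Here $M_1$ has a composition series of length $m-1$, and $M_0/M_1$ is simple. By induction,
\[
[M]=\sum_i [M_i/M_{i+1}].
\]
Any two homomorphisms agreeing on all $[S]$ therefore agree everywhere, which proves uniqueness.

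The only real care needed is in Step 2: verifying that the spliced chain is a genuine composition series, with strict inclusions and simple factors, via the isomorphism theorems.
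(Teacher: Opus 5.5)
Your proof is correct and takes essentially the same route as the paper. You define $\widetilde\delta$ by summing $\delta$ over composition factors (well defined by Jordan--H\"older), prove additivity by splicing a composition series of $A$ with the lifted series of $C$, and get uniqueness because simple classes generate $K_0$; you are just more explicit than the paper about isomorphism invariance, strictness of the spliced chain, and the induction behind uniqueness.
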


\begin{proof}
We define $\widetilde{\delta}$ explicitly.
Let
\[
M=M_0\supsetneq M_1\supsetneq \cdots \supsetneq M_m=0
\]
be a composition series for $M$. Define
\[
\widetilde{\delta}([M])
=
\delta(M_0/M_1)
+\cdots+
\delta(M_{m-1}/M_m).
\]

We must show that this definition is independent of the choice of composition series and that it respects the defining relations of $K_0$.

For the first point, the Jordan--H\"older theorem implies that any two composition series of $M$ have the same multiset of composition factors. Since addition in $G$ is commutative, the resulting value of $\widetilde{\delta}([M])$ is independent of the chosen series.
Next, suppose
\[
0\longrightarrow A\longrightarrow B\longrightarrow C\longrightarrow 0
\]
is a short exact sequence.
Choose composition series
\[
A=A_0\supsetneq A_1\supsetneq \cdots \supsetneq A_m=0
\]
\[
C=C_0\supsetneq C_1\supsetneq \cdots \supsetneq C_n=0.
\]

By the correspondence theorem, submodules of $C$ correspond to submodules of $B$ containing $A$. Thus the composition series for $C$ lifts to a chain
\[
B=C_0'\supsetneq C_1'\supsetneq \cdots \supsetneq C_n'=A.
\]

Combining this with the composition series for $A$, we obtain a composition series for $B$:
\[
B=C_0'
\supsetneq C_1'
\supsetneq \cdots
\supsetneq C_n'
=
A_0
\supsetneq A_1
\supsetneq \cdots
\supsetneq A_m
=
0.
\]

Moreover, $C_i'/C_{i+1}'\cong C_i/C_{i+1}$, so the composition factors of $B$ are precisely those of $A$ together with those of $C$.
Therefore,
\[
\widetilde{\delta}([B])
=
\widetilde{\delta}([A])
+
\widetilde{\delta}([C]),
\]
showing that $\widetilde{\delta}$ respects the defining relations of $K_0$. Hence it induces a homomorphism
\[
K_0(R\textsf{-Mod}^f)\to G.
\]
Uniqueness is immediate, since every finite-length module decomposes into composition factors.
\end{proof}

The reader may recognize this statement as a universal property. Indeed, it immediately implies the following structural description of the Grothendieck group.

\begin{cor}
The Grothendieck group
\[
K_0(R\textsf{\emph{-Mod}}^f)
\]
is the free abelian group generated by the isomorphism classes of simple $R$-modules.
\end{cor}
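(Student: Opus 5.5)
Let $S$ be a set of representatives for the isomorphism classes of simple $R$-modules, and let $F=\bigoplus_{S\in\mathcal S}\mathbb Z\cdot e_S$ be the free abelian group on these classes. The plan is to build homomorphisms in both directions between $F$ and $K_0(R\textsf{-Mod}^f)$ and check that they are mutually inverse.

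\emph{From $F$ to $K_0$.} Define $\iota:F\to K_0(R\textsf{-Mod}^f)$ by $e_S\mapsto [S]$. This is well defined because a simple module has length one, so it lies in $R\textsf{-Mod}^f$. Since $F$ is free, no relations need to be checked.

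\emph{From $K_0$ to $F$.} Apply \cref{thm: uprop Jordan-Holder} with $G=F$ and $\delta(S)=e_S$, where $S$ is identified with its isomorphism class. This yields a homomorphism $\pi:K_0(R\textsf{-Mod}^f)\to F$ with $\pi([S])=e_S$. More generally, if $M$ has composition factors $S_1,\dots,S_m$, then $\pi([M])=\sum_i e_{S_i}$.

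\emph{The composition $\pi\circ\iota$.} This is the identity on generators, since $\pi(\iota(e_S))=\pi([S])=e_S$. Hence $\pi\circ\iota=\mathrm{id}_F$, which shows that $\iota$ is injective.

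\emph{Surjectivity of $\iota$.} This is the step needing actual argument: every $[M]$ must lie in the image of $\iota$. I would induct on the length of $M$ along a composition series $M=M_0\supsetneq\cdots\supsetneq M_m=0$. For each $i$ the sequence $0\to M_{i+1}\to M_i\to M_i/M_{i+1}\to 0$ is short exact, so the defining relations of $K_0$ give $[M_i]=[M_{i+1}]+[M_i/M_{i+1}]$. Telescoping yields
\[
[M]=\sum_{i=0}^{m-1}[M_i/M_{i+1}],
\]
which is a sum of classes of simple modules and therefore lies in $\operatorname{im}\iota$. Because the classes $[M]$ generate $K_0(R\textsf{-Mod}^f)$, the map $\iota$ is surjective.

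Combining injectivity and surjectivity, $\iota$ is an isomorphism, with inverse $\pi$, and so $K_0(R\textsf{-Mod}^f)$ is free on the isomorphism classes of simple modules. The point that needs the most care is the injectivity direction: the classes $[S]$ must be shown to be linearly independent in $K_0(R\textsf{-Mod}^f)$, and not merely to generate it. This is exactly what \cref{thm: uprop Jordan-Holder} supplies through the retraction $\pi$, and it ultimately rests on the Jordan--H\"older theorem.
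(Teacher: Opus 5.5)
Your proof is correct and is essentially the paper's argument made explicit. The paper just observes that \cref{thm: uprop Jordan-Holder} is the universal property of the free abelian group on the isomorphism classes of simple modules. Your retraction $\pi$ (the existence half of that theorem) and your telescoping surjectivity argument (which is exactly what underlies its uniqueness half) are the standard unpacking of that observation.
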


\begin{proof}
The preceding theorem is precisely the universal property characterizing a free abelian group on the set of simple modules.
\end{proof}

This result captures the philosophy of Grothendieck groups in a particularly transparent way. The category
\[
R\textsf{-Mod}^f
\]
may be extremely complicated: modules can possess intricate extension structures and many nonisomorphic modules may share the same composition factors. Yet the Grothendieck group forgets all extension data and remembers only the multiplicities of simple objects.

In other words, $K_0$ decategorifies the category of finite-length modules by collapsing every module to the formal sum of its simple constituents.

The theorem above is not specific to module categories. In fact, the same argument works in any finite-length abelian category.

\begin{defn}
An abelian category $\mathcal C$ is called \emph{finite-length} if every object admits a composition series of finite length.
\end{defn}

The Jordan--H\"older theorem extends to this setting, and the same proof yields the following general result.

\begin{theorem}
Let $\mathcal C$ be a finite-length abelian category. Then
\[
K_0(\mathcal C)
\]
is the free abelian group generated by the isomorphism classes of simple objects of $\mathcal C$.
\end{theorem}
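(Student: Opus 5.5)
We prove this by building mutually inverse homomorphisms between $K_0(\mathcal C)$ and the free abelian group $F$ on the set $S$ of isomorphism classes of simple objects of $\mathcal C$, following the module-theoretic argument of \cref{thm: uprop Jordan-Holder} in the abstract setting. We assume, as the text indicates, the Jordan--H\"older theorem for finite-length abelian categories: any two composition series of an object have the same length and the same multiset of factors up to isomorphism. Here a composition series of $M$ is a chain of subobjects $M=M_0\supsetneq\cdots\supsetneq M_m=0$ whose successive quotients are simple.

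First we construct $\phi:K_0(\mathcal C)\to F$. For an object $M$, choose a composition series and set $\phi_0(M)=\sum_i [M_i/M_{i+1}]\in F$. By Jordan--H\"older this does not depend on the series. It is also isomorphism invariant, since an isomorphism $M\cong M'$ carries a composition series of $M$ to one of $M'$ with isomorphic factors.

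Next we check additivity on a short exact sequence $0\to A\to B\xrightarrow{p} C\to 0$. This is the step needing care, because the correspondence theorem must be replaced by its categorical analogue. Given a composition series $(C_j)$ of $C$, take preimages $C_j'=p^{-1}(C_j)$, defined as the pullback of $C_j\hookrightarrow C$ along $p$. We must show three things:
\begin{enumerate}[label=$(\roman*)$]
    \item each $C_j'$ is a subobject of $B$ containing $A$, with $C_n'=A$ and $C_0'=B$;
    \item $C_j'/A\cong C_j$, which holds because $p$ restricted to $C_j'$ is an epimorphism onto $C_j$ (pullbacks of epis are epi in an abelian category) with kernel $A$;
    \item $C_j'/C_{j+1}'\cong C_j/C_{j+1}$, which follows from the third isomorphism theorem, itself a consequence of the snake lemma or the nine lemma applied to $A\subset C_{j+1}'\subset C_j'$.
\end{enumerate}
Strictness of the inclusions follows from (iii), since the quotients are simple and hence nonzero. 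Concatenating this chain with a composition series of $A$ gives a composition series of $B$ whose factors are those of $A$ together with those of $C$. Hence $\phi_0(B)=\phi_0(A)+\phi_0(C)$, and \cref{prop: uprop of K0} yields the homomorphism $\phi$.

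Conversely, the universal property of free abelian groups gives $\psi:F\to K_0(\mathcal C)$ sending the class of a simple $S$ to $[S]$. We then check the two composites.

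For $\phi\psi$: a simple object has the composition series $S\supsetneq 0$, so $\phi([S])=[S]$ and $\phi\psi=\id$.

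For $\psi\phi$: since $K_0(\mathcal C)$ is generated by the classes $[M]$, it suffices to show $[M]=\sum_i[M_i/M_{i+1}]$ in $K_0(\mathcal C)$. We argue by induction on the length of a composition series, using the short exact sequences $0\to M_{1}\to M\to M/M_1\to 0$ and applying the induction hypothesis to $M_1$, which has a composition series of shorter length. Hence $\psi\phi=\id$, and $\phi$ is an isomorphism.

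We expect the main obstacle to be the categorical correspondence and isomorphism theorems in (i)--(iii). The rest is formal. If element-free reasoning proves awkward, we would invoke the Freyd--Mitchell embedding theorem, applied to the small abelian subcategory generated by the objects involved, which reduces (i)--(iii) to the module case already treated in \cref{thm: uprop Jordan-Holder}.
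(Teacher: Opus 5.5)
Your proposal is correct and follows the paper's route. The composition-factor map is well defined by Jordan--H\"older, it is additive because a composition series of $C$ lifts along $B\to C$ and concatenates with one of $A$, and freeness follows. Your explicit inverse $\psi$ with the induction $[M]=\sum_i[M_i/M_{i+1}]$, and your pullback form of the correspondence theorem, simply spell out what the paper compresses into ``uniqueness is immediate'' and ``the same proof yields.''
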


Thus, in any finite-length category, the Grothendieck group completely reduces the category to its simple building blocks.

We conclude with an example illustrating how nontrivial Grothendieck groups can arise even in classical algebra.

\begin{ex}\label{ex: Abf}
Let
\[
\mathsf{Ab}^f
\]
denote the category of finite abelian groups.
Since every strictly descending chain of subgroups decreases the order of the group, every finite abelian group has finite length. Thus $\mathsf{Ab}^f$ is a finite-length category.
The simple objects of $\mathsf{Ab}^f$ are precisely the cyclic groups $\mathbb Z/p\mathbb Z$ for primes $p$. Consequently,
\[
K_0(\mathsf{Ab}^f)
\]
is the free abelian group generated by the classes $[\mathbb Z/p\mathbb Z]$.

This already contrasts sharply with our earlier examples, where $K_0\cong \mathbb Z$.

The Grothendieck group also naturally encodes the order of a finite abelian group. Define
\[
\delta(\mathbb Z/p\mathbb Z)=p
\]
as an element of the multiplicative group $\mathbb Q^\times$. By the universal property above, this extends uniquely to a homomorphism
\[
\widetilde{\delta}:
K_0(\mathsf{Ab}^f)\to \mathbb Q^\times.
\]

If $A=A_0\supsetneq A_1\supsetneq \cdots \supsetneq A_n=0$ is a composition series for $A$, then
\[
\widetilde{\delta}([A])
=
\prod_{i=0}^{n-1}|A_i/A_{i+1}|.
\]

Since the order of a finite group is multiplicative in short exact sequences,
\[
|B|=|A|\cdot |B/A|,
\]
it follows inductively that
\[
\widetilde{\delta}([A])=|A|.
\]

Thus the ordinary notion of cardinality factors naturally through the Grothendieck group.
\end{ex}

\medskip
This example illustrates the broader philosophy beautifully: Grothendieck groups transform complicated algebraic structures into linear combinations of simple building blocks, while preserving fundamental additive information.

\section{The Euler Characteristic}

One of the remarkable features of Grothendieck groups is that they naturally produce alternating-sum invariants. The most classical and important example of such an invariant is the \emph{Euler characteristic}.

Historically, the Euler characteristic arose in topology long before the development of homological algebra or $K$-theory. In its earliest form, it was defined for polyhedra by the formula
\[
\chi = V-E+F,
\]
where $V$, $E$, and $F$ denote the numbers of vertices, edges, and faces, respectively.

Euler's astonishing discovery was that this quantity does not depend on the particular shape of the polyhedron, but only on its underlying topology. In particular, every convex polyhedron satisfies
\[
V-E+F=2.
\]

For example, a cube has
\[
V=8,\qquad E=12,\qquad F=6,
\]
so $8-12+6=2$.
The same computation works for every convex polyhedron, no matter how complicated its geometry may be.

More generally, the Euler characteristic detects global topological structure. A torus, for instance, has Euler characteristic $0$, reflecting the presence of a ``hole'' in the surface. In this sense, the Euler characteristic is one of the earliest examples of a topological invariant.

The natural setting for this construction is the theory of \emph{CW-complexes}. Roughly speaking, a finite CW-complex is built by gluing together cells (topological spheres) of various dimensions:
\[
0\text{-cells},\ 1\text{-cells},\ 2\text{-cells},\ \dots
\]
A polyhedron, for example, is assembled from vertices (0-cells), edges (1-cells), and polygonal faces (2-cells). 
If $X$ is a finite CW-complex and $k_i$ denotes the number of $i$-dimensional cells, then the Euler characteristic of $X$ is defined by
\[
\chi(X)=k_0-k_1+k_2-k_3+\cdots.
\]

At first sight this formula appears purely combinatorial. However, one of the great insights of algebraic topology is that it can be reinterpreted homologically. From this perspective, the Euler characteristic becomes an instance of a far more general categorical construction.

Indeed, Grothendieck groups allow us to define Euler characteristics in \emph{any} abelian category.

\begin{defn}
Let
\[
V_\bullet:
0\longrightarrow
V_N
\xrightarrow{\alpha_N}
V_{N-1}
\xrightarrow{\alpha_{N-1}}
\cdots
\xrightarrow{\alpha_1}
V_0
\longrightarrow 0
\]
be a bounded complex in an abelian category $\mathcal C$. The \emph{Euler characteristic} of $V_\bullet$ is the element
\[
\chi(V_\bullet)
=
\sum_i (-1)^i [V_i]
\in
K_0(\mathcal C).
\]
\end{defn}

This definition may initially seem unrelated to topology. The connection emerges once we learn how to encode a CW-complex algebraically.

Let $X$ be a finite CW-complex. Denote by $\sigma_n(X)$, the set of $n$-dimensional cells of $X$. We then define $C_n(X)$ to be the free abelian group generated by these cells.
For example, if $X$ is a polyhedron, then:
\begin{itemize}
    \item $C_0(X)$ is generated by the vertices,
    \item $C_1(X)$ is generated by the edges,
    \item $C_2(X)$ is generated by the faces.
\end{itemize}

Thus, $\mathrm{rk}(C_n(X)) = |\sigma_n(X)| = k_n$.
Since we are working in the category of finitely generated abelian groups, Corollary 1.9 implies that
\[
K_0(\mathsf{Ab}^{fg})\cong \mathbb Z,
\]
with the class of a group identified with its rank. Consequently,
\[
[C_n(X)] = k_n,
\]
so the classical Euler characteristic becomes
\[
\chi(X)=\sum_n (-1)^n [C_n(X)].
\]

To turn the groups $C_n(X)$ into a complex, we introduce the \emph{boundary maps}
\[
\partial_n:C_n(X)\to C_{n-1}(X).
\]

Intuitively, $\partial_n$ sends a cell to its boundary. For example:
\begin{itemize}
    \item the boundary of an edge is the difference of its two endpoints,
    \item the boundary of a polygonal face is the sum of its edges,
    \item the boundary of a solid three-dimensional cell is its surrounding surface.
\end{itemize}

These maps assemble into a chain complex
\[
C_\bullet(X):
0\longrightarrow
C_N(X)
\xrightarrow{\partial_N}
C_{N-1}(X)
\xrightarrow{\partial_{N-1}}
\cdots
\xrightarrow{\partial_1}
C_0(X)
\longrightarrow 0.
\]

The crucial fact is that
\[
\partial_n\circ \partial_{n+1}=0.
\]

Geometrically, this expresses the intuitive principle that ``the boundary of a boundary is empty.'' For instance, the boundary of a solid ball is a sphere, and the sphere itself has no boundary.

Thus $C_\bullet(X)$ is genuinely a complex, and its Euler characteristic is precisely the classical Euler characteristic of the space:
\[
\chi(X)=\chi(C_\bullet(X)).
\]

The power of the categorical viewpoint comes from the fact that Euler characteristics depend only on the \emph{homology} of the complex.

\begin{prop}\label{prop: Eulerchar}
Let
\[
V_\bullet:
0\to V_N\to \cdots \to V_0\to 0
\]
be a bounded complex in an abelian category. Then
\[
\chi(V_\bullet)
=
\sum_{i=0}^N (-1)^i [H_i(V_\bullet)].
\]

In particular, if $V_\bullet$ is exact, then $\chi(V_\bullet)=0$.
\end{prop}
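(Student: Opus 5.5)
We will prove the formula by breaking the complex into short exact sequences built from cycles, boundaries and homology, and then summing the relations that $K_0(\mathcal C)$ imposes on each one.

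For each $i$, set
\[
Z_i=\ker(\alpha_i), \qquad B_i=\operatorname{im}(\alpha_{i+1}),
\]
with the conventions $\alpha_0=0$ and $\alpha_{N+1}=0$. Thus $Z_0=V_0$ and $B_N=0$. Since $\alpha_i\circ\alpha_{i+1}=0$, we have $B_i\subseteq Z_i$, and by definition $H_i(V_\bullet)=Z_i/B_i$.

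This gives two families of short exact sequences in $\mathcal C$:
\[
0\longrightarrow B_i\longrightarrow Z_i\longrightarrow H_i(V_\bullet)\longrightarrow 0,
\]
\[
0\longrightarrow Z_i\longrightarrow V_i\xrightarrow{\ \alpha_i\ } B_{i-1}\longrightarrow 0 .
\]
The second sequence uses the first isomorphism theorem in the abelian category, $V_i/\ker\alpha_i\cong\operatorname{im}\alpha_i$, with the conventions $B_{-1}=0$ and $\alpha_0=0$.

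By the defining relations of $K_0(\mathcal C)$, these sequences yield, for every $i$,
\[
[Z_i]=[B_i]+[H_i(V_\bullet)], \qquad [V_i]=[Z_i]+[B_{i-1}],
\]
and therefore
\[
[V_i]=[B_i]+[H_i(V_\bullet)]+[B_{i-1}].
\]

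Next multiply by $(-1)^i$ and sum over $0\le i\le N$. The boundary terms telescope: each $[B_j]$ with $0\le j\le N-1$ appears once with sign $(-1)^j$ (from index $i=j$) and once with sign $(-1)^{j+1}$ (from index $i=j+1$), so it cancels. The two extreme terms $[B_{-1}]$ and $[B_N]$ vanish because those objects are zero. What remains is
\[
\chi(V_\bullet)=\sum_{i=0}^N(-1)^i[H_i(V_\bullet)].
\]

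For the final clause, if $V_\bullet$ is exact then every $H_i(V_\bullet)=0$. Hence every term on the right is $[0]=0$; note that $[0]=[0]+[0]$ by \cref{lem: direct sum}. It follows that $\chi(V_\bullet)=0$.

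The only delicate point is the bookkeeping at the ends of the complex. One must fix the conventions for $B_{-1}$, $B_N$ and $Z_0$ so that the sequences at $i=0$ and $i=N$ are genuinely short exact, and then check the sign cancellation in the telescoping sum. Everything else is a formal consequence of the relations defining $K_0$.
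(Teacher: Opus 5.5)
Your proof is correct. It uses the same two short exact sequences as the paper's proof, $0\to\ker\alpha_i\to V_i\to\operatorname{im}\alpha_i\to 0$ and $0\to\operatorname{im}\alpha_{i+1}\to\ker\alpha_i\to H_i(V_\bullet)\to 0$, but you organize them differently.

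The paper verifies the case $N=1$ by hand and then inducts on $N$. It truncates $V_\bullet$ to $V_\bullet'$ by deleting $V_N$, and notes that only the top homology groups change. It then reuses the $N=1$ computation to get $[H_{N-1}(V_\bullet)]-[H_N(V_\bullet)]=[H_{N-1}(V_\bullet')]-[V_N]$, and finishes with the inductive hypothesis together with $\chi(V_\bullet)=\chi(V_\bullet')+(-1)^N[V_N]$.

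You instead apply both sequences in every degree at once and cancel the boundary classes $[B_j]$ in a single telescoping sum. This removes the need for induction and for tracking how homology changes under truncation. Your endpoint conventions $Z_0=V_0$ and $B_{-1}=B_N=0$ replace the paper's separate base case.

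Your version is shorter and shows directly why the formula holds: each boundary object is counted in degrees $j$ and $j+1$ with opposite signs. The paper's version instead isolates one local computation at the top of the complex and reuses it at each step, at the cost of more index bookkeeping.
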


\begin{proof}
    There is nothing to show for $N = 0$. In the case $N=1$, we have
    \[\chi(V_\bullet) = [V_0]-[V_1],\]
    and
    \begin{align*}
        [H_0(V_{\bullet})] - [H_1(V_\bullet)] = [V_0/\mathrm{im}(\alpha_1)] - [\ker(\alpha_1)].
    \end{align*}
    There are short exact sequences
    \[0 \longrightarrow \mathrm{im}(\alpha_1) \longrightarrow V_0 \longrightarrow V_0/\mathrm{im}(\alpha_1) \longrightarrow 0\]
    and
    \[0 \longrightarrow \mathrm{ker}(\alpha_1) \longrightarrow V \longrightarrow \mathrm{im}(\alpha_1) \longrightarrow 0,\]
    giving the relations
    \[[V_0/\mathrm{im}(\alpha_1)]  = [V_0]-[\mathrm{im}(\alpha_1)]\qquad \text{and}\qquad [\mathrm{im}(\alpha_1)] + [\ker(\alpha_1)] = [V_1].\] 
    Combined, these imply that 
    \[[V_0/\mathrm{im}(\alpha_1)] - [\ker(\alpha_1)] = [V_0]-[V_1],\]
    proving the claim for $N=1$.

    For general $N$, we use induction. Given $V_\bullet$ of length $N$, we may assume that the result is true for shorter complexes. 

    We naturally obtain the truncation complex
    \[V_\bullet ' : \qquad 0 \longrightarrow V_{N-1}\xrightarrow{\ \alpha_{N-1}\ } \cdots \xrightarrow{\ \, \alpha_1 \, \ }V_0 \longrightarrow 0.\]
    Then
    \begin{align*}
        \chi(V_\bullet) = \chi(V_\bullet') + (-1)^N [V_N].\tag{$\dag$}
    \end{align*}
    To compute the homology of $V_\bullet$ in terms of the homology of $V_\bullet'$, observe that
    \[H_i(V_\bullet) = H_i(V_\bullet') \quad \text{for $0\le i\le N-2$}\]
    and
    \[H_{N-1}(V_\bullet') = \ker(\alpha_{N-1}),\quad H_{N-1}(V_\bullet) = \frac{\ker(\alpha_{N-1})}{\mathrm{im}(\alpha_N)},\quad H_N(V_\bullet) = \ker(\alpha_N).\]
    It follows for the same reason as the $N=1$ case that
    \[[H_{N-1}(V_\bullet)] -[H_N(V_\bullet)] = [H_{N-1}(V_\bullet')]-[V_N].\]
    By induction, we have
    \begin{align*}
        \chi(V_\bullet') &= \sum_{i} (-1)^i [H_i(V_\bullet')]\\ &= (-1)^{N-1}[H_{N-1}(V_\bullet)] + (-1)^{N}[H_{N}(V_\bullet)] -(-1)^N[V_N]+ \sum_{i\le N-2}(-1)^i[H_i(V_\bullet)].
    \end{align*}
    Rearranging and applying $(\dag)$, we conclude.
\end{proof}

\begin{rem}\label{rem: uprop of Eulerchar} In the context of \cref{prop: K0 of R-Free}, if we define
\[\chi_G(V_\bullet) = \sum_i (-1)^i \delta(V_i),\]
then by linearity the unique homomorphism $K_0(\mathcal{C}) \to G$ maps $\chi(V_\bullet)$ to $\chi_G(V_\bullet)$. 

This shows how the Euler characteristic is a ``universal Euler charateristic.''
\end{rem}

\medskip
Going back to the topological discussion, this proposition has profound consequences.

For a CW-complex $X$, the homology groups
\[
H_n(X):=H_n(C_\bullet(X))
\]
measure the global topology of the space. Their ranks
\[
b_n=\mathrm{rk}(H_n(X))
\]
are called the \emph{Betti numbers} of $X$.
The proposition therefore implies the celebrated formula
\[
\chi(X)
=
b_0-b_1+b_2-b_3+\cdots.
\]

This expresses the Euler characteristic entirely in terms of homology.
Intuitively:
\begin{itemize}
    \item $b_0$ counts connected components,
    \item $b_1$ counts one-dimensional holes or loops,
    \item $b_2$ counts two-dimensional cavities,
    \item and higher Betti numbers measure higher-dimensional holes.
\end{itemize}

For example:
\begin{itemize}
    \item a sphere satisfies $b_0=1$, $b_1=0$, $b_2=1$, so
    \[
    \chi(S^2)=1-0+1=2;
    \]
    \item a torus satisfies $b_0=1$, $b_1=2$, $b_2=1$, so
    \[
    \chi(T^2)=1-2+1=0.
    \]
\end{itemize}

Thus the Euler characteristic, originally defined as a combinatorial alternating sum, is ultimately governed by homological algebra.

From the perspective of Grothendieck groups, this phenomenon becomes almost inevitable: exact pieces cancel in $K_0$, leaving behind only homological information. In this way, the Euler characteristic provides one of the clearest illustrations of the philosophy underlying Grothendieck's construction.

For a more detailed treatment of CW-complexes and homology theory, see \cite{hatcher2002algebraic}.

\section{Representation Rings}

One of the recurring themes of this paper has been that Grothendieck groups become especially meaningful when the underlying category carries additional structure beyond direct sum. Representation theory provides one of the most elegant examples of this phenomenon.

Recall that if $G$ is a finite group, then a representation of $G$ over $\mathbb C$ is a homomorphism
\[
\rho:G\to \mathrm{GL}(V),
\]
where $V$ is a finite-dimensional complex vector space. The category of such representations, denoted
\[
\mathrm{Rep}_{\mathbb C}(G),
\]
is extraordinarily rich: representations encode symmetries of algebraic, geometric, and combinatorial objects, while simultaneously translating group-theoretic questions into problems in linear algebra.

Unlike many of the categories considered previously, $\mathrm{Rep}_{\mathbb C}(G)$ possesses \emph{two} natural operations. In addition to direct sum, representations may also be tensor multiplied. If $U$ and $V$ are representations of $G$, then the actions on
\[
U\oplus V
\qquad\text{and}\qquad
U\otimes V
\]
are defined by
\[
g(u\oplus v)=gu\oplus gv,
\]
and
\[
g(u\otimes v)=gu\otimes gv,
\]
respectively.

The direct sum operation makes the set of isomorphism classes of representations into a commutative monoid. As before, we may formally adjoin inverses.

\begin{defn}
The \emph{representation ring} $R(G)$ is the Grothendieck group of the monoid of finite-dimensional complex representations of $G$ under direct sum. Equivalently,
\[
R(G)
\]
is the abelian group generated by symbols $[V]$, one for each representation $V$, subject to the relations
\[
[A\oplus B]=[A]+[B].
\]
\end{defn}

The tensor product enriches this construction considerably. Since tensor product distributes over direct sum,
\[
(U\oplus V)\otimes W
\cong
(U\otimes W)\oplus (V\otimes W),
\]
the operation
\[
[V]\cdot [W]=[V\otimes W]
\]
extends to a multiplication operation on $R(G)$. The trivial one-dimensional representation serves as the multiplicative identity. Consequently, $R(G)$ is not merely an abelian group, but a commutative ring.

At first glance, however, the additive structure of $R(G)$ may appear somewhat disappointing. Maschke's theorem implies that every finite-dimensional complex representation of a finite group decomposes as a direct sum of irreducible representations. Thus, if
\[
V_1,\dots,V_r
\]
are the irreducible representations of $G$, every representation has the form
\[
V_1^{\oplus n_1}\oplus \cdots \oplus V_r^{\oplus n_r}.
\]

It follows that the monoid of representations is isomorphic to
\[
\mathbb N^r,
\]
and hence
\[
R(G)\cong \mathbb Z^r
\]
as abelian groups.

From this perspective, the additive structure of $R(G)$ simply records the multiplicities of irreducible representations. The truly interesting information lies in the multiplicative structure: how tensor products of irreducible representations decompose into irreducibles.

To understand this structure, we introduce one of the central ideas of representation theory: characters.
For a representation
\[
\rho:G\to GL_n(\mathbb C),
\]
the associated \emph{character} is the function
\[
\chi_\rho:G\to \mathbb C
\]
defined by
\[
\chi_\rho(g)=\operatorname{tr}(\rho(g)).
\]

At first sight, the character appears to be a remarkably crude invariant: it records only the traces of the representing matrices. Nevertheless, characters encode an astonishing amount of information about a representation.

\begin{prop}\label{prop: characters}
Let $\chi$ be the character of an $n$-dimensional representation of $G$. Then:
\begin{enumerate}[label=$(\arabic*)$]
    \item $\chi(1)=n$,
    \item $\chi(g^{-1})=\overline{\chi(g)}$ for all $g\in G$,
    \item $\chi(st)=\chi(ts)$ for all $s,t\in G$.
\end{enumerate}
\end{prop}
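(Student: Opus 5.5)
The plan is to prove the three properties directly from elementary linear algebra applied to the matrices $\rho(g)$. Throughout, $\rho:G\to \mathrm{GL}_n(\mathbb C)$ is a representation with character $\chi=\chi_\rho$, and $G$ is finite, as in the definition of $\mathrm{Rep}_{\mathbb C}(G)$.

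For $(1)$, I will use that $\rho$ is a group homomorphism, so $\rho(1)=I_n$. Hence $\chi(1)=\operatorname{tr}(I_n)=n$.

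For $(3)$, I will invoke the standard identity $\operatorname{tr}(AB)=\operatorname{tr}(BA)$ for square matrices. To see it, write $\operatorname{tr}(AB)=\sum_{i,j}A_{ij}B_{ji}$ and note that this expression is symmetric in $A$ and $B$. Then
\[
\chi(st)=\operatorname{tr}(\rho(s)\rho(t))=\operatorname{tr}(\rho(t)\rho(s))=\chi(ts).
\]
A consequence worth recording is that $\chi$ is constant on conjugacy classes, since $\chi(hgh^{-1})=\chi(gh^{-1}h)=\chi(g)$.

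Part $(2)$ is the only step that uses finiteness of $G$, and I expect it to be the main point.
\begin{enumerate}
\item Let $m$ be the order of $g$. Then $\rho(g)^m=\rho(g^m)=I_n$, so $\rho(g)$ is annihilated by $x^m-1$.
\item This polynomial has $m$ distinct roots over $\mathbb C$. The minimal polynomial of $\rho(g)$ divides it, so it splits with distinct roots, and therefore $\rho(g)$ is diagonalizable.
\item Its eigenvalues $\lambda_1,\dots,\lambda_n$ are $m$-th roots of unity. In particular $|\lambda_i|=1$, so $\lambda_i^{-1}=\overline{\lambda_i}$.
\item In a basis that diagonalizes $\rho(g)$, the matrix $\rho(g^{-1})=\rho(g)^{-1}$ is diagonal with entries $\lambda_i^{-1}$.
\end{enumerate}
Therefore
\[
\chi(g^{-1})=\sum_i\lambda_i^{-1}=\sum_i\overline{\lambda_i}=\overline{\chi(g)}.
\]

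If one prefers to avoid the diagonalizability argument, there is an alternative route. Average a Hermitian inner product over $G$ to make $\rho(g)$ unitary. Then
\[
\rho(g^{-1})=\rho(g)^{-1}=\rho(g)^*,
\]
and taking traces gives the conjugate. Either approach suffices, and I would present the eigenvalue argument since it needs nothing beyond the finite order of $g$.
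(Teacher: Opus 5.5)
Your proof is correct and is the standard argument. The paper gives no proof of its own: it only notes that $(3)$ is the cyclic invariance $\operatorname{tr}(AB)=\operatorname{tr}(BA)$ and refers to Chapter 2 of Serre, where $(1)$ follows from $\rho(1)=I_n$ and $(2)$ from the eigenvalues of $\rho(g)$ being roots of unity, exactly as you argue. (Diagonalizability is not actually needed in $(2)$: by triangularizing, the eigenvalues of $\rho(g)^{-1}$, counted with multiplicity, are the $\lambda_i^{-1}$ in any case. Including it does no harm.)
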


The third property reflects the cyclic invariance of trace:
\[
\operatorname{tr}(AB)=\operatorname{tr}(BA).
\]
Functions satisfying
\[
f(st)=f(ts)
\]
for all $s,t\in G$ are called \emph{class functions}, since they are constant on conjugacy classes.

\medskip
{\bf Note:} For proofs and more detailed exposition of the above and the following results, see Chapter 2 of \cite{serre1977representations}.

\medskip
Characters interact beautifully with the additive and multiplicative operations on representations. If $V_1$ and $V_2$ have characters $\chi_1$ and $\chi_2$, then
\[
\chi_{V_1\oplus V_2}
=
\chi_1+\chi_2,
\]
while
\[
\chi_{V_1\otimes V_2}
=
\chi_1\chi_2.
\]

Thus direct sum and tensor product translate into ordinary addition and multiplication of functions. In this way, characters ``linearize'' representation theory.

The remarkable fact is that characters lose essentially no information.

\begin{theorem}
Two finite-dimensional complex representations of $G$ are isomorphic if and only if they have the same character.
\end{theorem}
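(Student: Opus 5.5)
The forward implication is immediate: if $\varphi:V\to W$ is a $G$-equivariant isomorphism, then $\rho_W(g)=\varphi\rho_V(g)\varphi^{-1}$ for every $g\in G$. Trace is invariant under conjugation, so $\chi_V=\chi_W$. All the work is in the converse. The plan there is to show that the character determines the multiplicities $n_1,\dots,n_r$ in the Maschke decomposition $V\cong V_1^{\oplus n_1}\oplus\cdots\oplus V_r^{\oplus n_r}$. Two representations with the same multiplicities are isomorphic, so this suffices.

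To recover the multiplicities, I would introduce the Hermitian inner product on class functions,
\[
\langle \phi,\psi\rangle=\frac{1}{|G|}\sum_{g\in G}\phi(g)\overline{\psi(g)}.
\]
The key step is the orthonormality of irreducible characters: $\langle \chi_i,\chi_j\rangle=\delta_{ij}$, where $\chi_i$ is the character of $V_i$. Granting this, additivity of characters under direct sum gives $\chi_V=\sum_i n_i\chi_i$, so that
\[
n_i=\langle \chi_V,\chi_i\rangle .
\]
Hence $\chi_V=\chi_W$ forces equal multiplicities, and therefore $V\cong W$. In the language of this paper, the character map $R(G)\to\{\text{class functions}\}$ is injective on the classes of actual representations.

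The main obstacle is proving orthonormality. I would do this by averaging. For representations $U,W$, the space $\mathrm{Hom}(U,W)$ is itself a representation via $(g\cdot f)=\rho_W(g)f\rho_U(g)^{-1}$. Its character is $\overline{\chi_U}\chi_W$, using property (2) of \cref{prop: characters} to identify the character of $U^*$ with $\overline{\chi_U}$. The averaging operator $P=\frac1{|G|}\sum_g g$ is a projection onto the $G$-fixed vectors, which are exactly $\mathrm{Hom}_G(U,W)$. Taking traces gives
\[
\dim \mathrm{Hom}_G(U,W)=\frac{1}{|G|}\sum_g \overline{\chi_U(g)}\chi_W(g)=\langle \chi_W,\chi_U\rangle .
\]
Schur's lemma then says $\dim\mathrm{Hom}_G(V_i,V_j)=\delta_{ij}$ over $\mathbb C$: nonzero maps between irreducibles are isomorphisms, and the endomorphisms of an irreducible are scalars because $\mathbb C$ is algebraically closed and an eigenspace is a subrepresentation. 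This yields orthonormality.

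The points that need care are that $P$ is idempotent with image exactly the invariants, so that $\operatorname{tr}P=\dim$ of the invariants, and the identification of the character of $\mathrm{Hom}(U,W)\cong U^*\otimes W$. For the latter I would use the multiplicativity $\chi_{V_1\otimes V_2}=\chi_1\chi_2$ quoted above together with $\chi_{U^*}(g)=\chi_U(g^{-1})=\overline{\chi_U(g)}$. Alternatively, the case $U=V_i$, $W=V_j$ can be cited from Chapter~2 of \cite{serre1977representations}, as the paper indicates.
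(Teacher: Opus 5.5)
Your proof is correct and is essentially the standard argument the paper defers to (it gives no proof of its own, citing Chapter 2 of Serre): you show orthonormality of irreducible characters via Schur's lemma and averaging, deduce $n_i=\langle \chi_V,\chi_i\rangle$, and conclude that equal characters force equal multiplicities in the Maschke decomposition. The only difference is cosmetic: you get orthonormality from the trace of the averaging projector onto $\mathrm{Hom}_G(U,W)$, whereas Serre's main text uses a matrix-coefficient computation.
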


Consequently, characters completely classify representations up to isomorphism.

The irreducible representations play a particularly distinguished role. If $G$ has $r$ conjugacy classes, then $G$ possesses exactly $r$ irreducible representations over $\mathbb C$. Let
\[
\chi_1,\dots,\chi_r
\]
denote the corresponding irreducible characters.

The following theorem is one of the cornerstones of character theory.

\begin{theorem}\label{thm: class functions}
Every class function
\[
f:G\to \mathbb C
\]
can be expressed uniquely as a complex linear combination of the irreducible characters.
\end{theorem}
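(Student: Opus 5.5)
The plan is the standard orthogonality argument from Chapter 2 of \cite{serre1977representations}. We equip the space $\mathcal H$ of class functions on $G$ with the Hermitian inner product
\[
\langle f_1,f_2\rangle=\frac{1}{|G|}\sum_{g\in G} f_1(g)\overline{f_2(g)}.
\]
We then show two things: the irreducible characters $\chi_1,\dots,\chi_r$ form an orthonormal family in $\mathcal H$, and they span $\mathcal H$. Uniqueness of the expression follows from orthonormality, which gives linear independence. Existence follows from spanning. Alternatively, since $\dim\mathcal H$ equals the number of conjugacy classes, which is $r$, linear independence alone would suffice. However, the equality ``number of irreducibles $=$ number of classes'' is essentially equivalent to what we are proving, so I would not rely on it and would prove spanning directly.

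\textbf{Orthonormality.} Given irreducibles $V_i,V_j$ and any linear map $h:V_j\to V_i$, we average it to
\[
h^0=\frac{1}{|G|}\sum_g \rho_i(g)\,h\,\rho_j(g)^{-1},
\]
which is $G$-equivariant. By Schur's lemma, $h^0=0$ if $i\neq j$, and $h^0=\frac{\operatorname{tr}h}{\dim V_i}\,\mathrm{id}$ if $i=j$. Writing these identities in matrix coefficients, taking $h$ to be elementary matrices, and summing over the diagonal gives $\langle\chi_i,\chi_j\rangle=\delta_{ij}$. Here part (2) of \cref{prop: characters} is used to rewrite $\chi_j(g^{-1})$ as $\overline{\chi_j(g)}$.

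\textbf{Spanning.} This is the main obstacle. It suffices to show that a class function $f$ orthogonal to every $\overline{\chi_i}$ vanishes; the $\overline{\chi_i}$ are again the irreducible characters, namely those of the duals. For any representation $\rho:G\to\mathrm{GL}(V)$, set
\[
\rho_f=\sum_g f(g)\rho(g).
\]
Because $f$ is a class function, $\rho_f$ commutes with every $\rho(h)$. If $V=V_i$ is irreducible, Schur's lemma gives $\rho_f=\lambda\,\mathrm{id}$, and taking traces yields
\[
\lambda=\frac{|G|}{\dim V_i}\langle f,\overline{\chi_i}\rangle=0.
\]
By Maschke's theorem, $\rho_f=0$ for every representation. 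Now we apply this to the regular representation on $\mathbb C[G]$ and evaluate on the basis vector $e_1$:
\[
0=\rho_f(e_1)=\sum_g f(g)e_g.
\]
Hence $f\equiv 0$.

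The delicate points are checking that $\rho_f$ is equivariant and keeping track of the complex conjugations. Everything else is bookkeeping with Schur's lemma and Maschke's theorem, both of which I would take as known from the cited reference.
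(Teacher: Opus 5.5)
Your proposal is correct and follows the paper's route. The paper gives no argument of its own but defers to Chapter 2 of \cite{serre1977representations}, and your proof is the one given there: orthonormality from Schur's lemma via averaged maps, then spanning by showing that $\rho_f=\sum_g f(g)\rho(g)$ vanishes on every irreducible, hence on the regular representation, where it computes $\sum_g f(g)e_g$.
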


In other words, the irreducible characters form a basis for the vector space of class functions on $G$.
This immediately clarifies the structure of the representation ring.

\begin{cor}
The representation ring $R(G)$ is naturally isomorphic to the ring of class functions.
\end{cor}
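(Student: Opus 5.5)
The natural approach is to define the character map on $R(G)$ and check that it is a ring homomorphism which becomes an isomorphism after extending scalars. Some care is needed about what "isomorphic" means. Additively $R(G)\cong\mathbb Z^r$, while the class functions form an $r$-dimensional complex vector space. So the literal claim cannot be a bijection onto \emph{all} class functions: the constant function $1/2$, for instance, is not a character of any virtual representation. The plan is therefore to prove that the character map
\[
\chi: R(G)\longrightarrow \{\text{class functions } G\to\mathbb C\},\qquad [V]\longmapsto \chi_V,
\]
is an injective ring homomorphism. Its image will be the lattice of virtual characters, i.e.\ the $\mathbb Z$-span of $\chi_1,\dots,\chi_r$. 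Its $\mathbb C$-linear extension $R(G)\otimes_{\mathbb Z}\mathbb C\to\{\text{class functions}\}$ will then be a ring isomorphism, and this is the sense in which I would read ``naturally isomorphic''.

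\textbf{Step 1 (well-definedness and additivity).} The assignment $V\mapsto\chi_V$ is constant on isomorphism classes, since trace is conjugation invariant. It also satisfies $\chi_{A\oplus B}=\chi_A+\chi_B$. By the universal property of the group completion of a commutative monoid, it therefore extends uniquely to a group homomorphism $R(G)\to\{\text{class functions}\}$. Its values are class functions by \cref{prop: characters}(3).

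\textbf{Step 2 (ring structure).} Using $\chi_{V\otimes W}=\chi_V\chi_W$, the map is multiplicative on classes of genuine representations. It is multiplicative on all of $R(G)$ by bilinearity, since every element is a difference $[B]-[C]$, exactly as in \cref{lem: difference} (here via direct sums of representations). The trivial representation has constant character $1$, so the unit is preserved.

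\textbf{Step 3 (injectivity and image).} If $\chi([B]-[C])=0$, then $\chi_B=\chi_C$. By the theorem that characters determine representations, $B\cong C$, so $[B]-[C]=0$ and the map is injective. Under the additive identification $R(G)\cong\mathbb Z^r$ with basis $[V_1],\dots,[V_r]$, the image is the $\mathbb Z$-span of $\chi_1,\dots,\chi_r$. By \cref{thm: class functions} these characters form a $\mathbb C$-basis of the space of class functions. Hence the extension $R(G)\otimes\mathbb C\to\{\text{class functions}\}$ sends a basis to a basis and is an isomorphism of rings. Naturality can be noted as compatibility with restriction along group homomorphisms $H\to G$, since characters are computed pointwise.

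The main obstacle is not technical but one of formulation: the statement must be made precise so that it is actually true. All the heavy lifting has already been done by the cited results, namely characters classifying representations and irreducible characters forming a basis.
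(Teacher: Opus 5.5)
Your proposal is correct and follows essentially the same route as the paper: the character map is a ring homomorphism out of $R(G)$, its image is the $\mathbb Z$-span of $\chi_1,\dots,\chi_r$, and then the theorem that irreducible characters form a basis of the class functions is invoked. The one small difference on the way is that you get injectivity from the theorem that characters determine representations, while the paper uses freeness of $R(G)$ on the $[V_i]$ together with uniqueness of character expansions.

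Your reformulation is also the right one. The paper's final sentence asserts that the $\mathbb Z$-span of the irreducible characters is the whole space of class functions, which is false for exactly the reason you give. What the argument actually proves is your statement: $R(G)$ maps isomorphically onto the lattice of virtual characters, and $R(G)\otimes_{\mathbb Z}\mathbb C$ is isomorphic as a ring to the ring of class functions.
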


\begin{proof}
Characters are additive under direct sum and multiplicative under tensor product, so taking characters defines a semiring homomorphism from representations of $G$ to class functions on $G$.

By the previous theorem, every character decomposes uniquely as a sum of irreducible characters, and by \cref{prop: characters}, every linear combination of irreducible characters is a class function. 

Now every element of $R(G)$ is a linear combination of the $[V_i]$, so the map above extends to a ring homomorphism from $R(G)$ to the $\mathbb Z$-span of the irreducible characters $\chi_1,\dots, \chi_n$.

It is evident that this map is an isomorphism (of rings), since $R(G)$ is the free abelian group generated by the simple objects $[V_i]$. By \cref{thm: class functions}, the $\mathbb Z$-span of the irreducible characters is just the space of class functions, which completes the proof.
\end{proof}

Representation rings provide one of the clearest illustrations of the philosophy of decategorification. The complicated tensor category $\mathrm{Rep}_{\mathbb C}(G)$ is replaced by a commutative ring encoding how irreducible representations combine under tensor product. Although much categorical structure is lost in passing to $R(G)$, the resulting ring still retains surprisingly deep information about the representation theory of the group.

More broadly, representation rings foreshadow many later developments in $K$-theory. The passage from a tensor category to its Grothendieck ring reappears throughout modern mathematics, from algebraic geometry and topology to quantum groups and categorification theory. In this sense, $R(G)$ is not merely an isolated construction, but one of the earliest and most concrete manifestations of a pervasive idea.

\section{Grothendieck Group of a Ring}
Let $R$ be a \textit{possibly noncommutative} ring, and consider the category $\mathrm{Proj}(R)$ of finitely generated \textit{projective} $R$-modules.

The Grothendieck group $K_0(R)$ is generated by isomorphism classes $[P]$, for $P\in \mathrm{Proj}(R)$, subject to the relations
\[[P\oplus Q]=[P]+[Q].\]
Since the set of $[P]$ forms a monoid under direct sums, this is actually just an example of the Grothendieck group of a monoid introduced at the beginning of the chapter.

Note that this definition is the same as the Grothendieck Group $K_0(\mathrm{Proj}(R))$ introduced in \S 1: even though the addition operation here only needs to factor through \textit{split} exact sequences, any exact sequence of projective modules automatically splits. 

If $R$ is commutative, we can upgrade $K_0(R)$ to have a ring structure via the tensor product. Indeed, since $\otimes$ is commutative, distributes over $\oplus$, and $1 = [R]$ acts as the multiplicative identity, the operation $\times$ defined by \[[A]\times [B]=[A\otimes B]\] gives a valid multiplicative structure on $K_0(R)$.   

We can slightly improve Lemma 1.2.
\begin{lemma}
    Every element of $K_0(R)$ can be written as $[A]-[R^n]$, for $A,R^n\in \mathrm{Proj}(R)$.
\end{lemma}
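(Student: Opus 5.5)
Write an arbitrary element as $[P]-[Q]$ with $P,Q\in\mathrm{Proj}(R)$, then add a complement of $Q$ to both terms so that the subtracted module becomes free.

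First, by the argument of \cref{lem: difference} (which uses only \cref{lem: direct sum}, i.e.\ the relation $[P\oplus Q]=[P]+[Q]$, valid in the present monoid-completion definition of $K_0(R)$), an arbitrary element $x\in K_0(R)$ can be written as
\[
x=[P]-[Q]
\]
with $P,Q$ finitely generated projective. We collect all positive terms into one direct sum and all negative terms into another. Finite direct sums of finitely generated projectives are again finitely generated projective, so $P$ and $Q$ lie in $\mathrm{Proj}(R)$.

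Second, we use the standard characterization of finitely generated projectives: $Q$ is a direct summand of a finitely generated free module. Concretely, choose a surjection $R^n\to Q$ using $n$ generators of $Q$. Since $Q$ is projective, this surjection splits, so $R^n\cong Q\oplus Q'$ for some module $Q'$. As a direct summand of $R^n$, the module $Q'$ is finitely generated (it is a quotient of $R^n$) and projective (a summand of a free module). Hence $Q'\in\mathrm{Proj}(R)$.

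Third, we add and subtract $[Q']$:
\[
x=[P]+[Q']-[Q]-[Q']=[P\oplus Q']-[Q\oplus Q']=[P\oplus Q']-[R^n].
\]
Taking $A=P\oplus Q'$, which is finitely generated projective, gives the claim.

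None of these steps is a real obstacle; the only substantive input is the splitting $R^n\cong Q\oplus Q'$, which depends on $Q$ being projective. It is also the step where one must check that $Q'$ lies in $\mathrm{Proj}(R)$, so that $[Q']$ is a legitimate element of $K_0(R)$. Noncommutativity of $R$ plays no role, since we never use tensor products.
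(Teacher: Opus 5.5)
Your proof is correct and follows essentially the same route as the paper's: write the element as $[B]-[C]$, complete $C$ to a free module $R^n\cong C\oplus K$, and absorb $[K]$ into the positive term. The one difference is that you justify the splitting directly and check that the complement lies in $\mathrm{Proj}(R)$, where the paper simply cites an external proposition for that step.
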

\begin{proof}
    By Lemma 1.2, every element can be written as $[B]-[C]$. By Proposition VIII.6.4 (or more precisely its generalization to noncommutative rings), it follows that we may write $R^n = C\oplus K$ for some $n$, and another projective $R$-module $K$. Hence,
    \[[B]-[C] = [B]-([R^n]-[K]) = [B\oplus K]-[R^n],\]
    which is the desired form.
\end{proof}

If $R$ is a field, or more generally a PID, then every projective $R$-module is free. Hence $\mathrm{Proj}(R) \cong R\mathsf{-Free}^{fg}$ and by Proposition 1.5 we have $K_0(R) \cong \mathbb Z$.

This $\mathbb Z$-structure usually appears in the Grothendieck group $K_0(R)$, coming from the free modules.

\begin{lemma}
    The monoid map $\mathbb N\to \mathrm{Proj}(R)$ sending $n$ to $R^n$ induces a group homomorphism $\mathbb Z\to K_0(R)$. This map is injective if and only if $R$ satisfies the IBN property.
\end{lemma}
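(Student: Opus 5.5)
The plan has two parts: first produce the map $\mathbb Z\to K_0(R)$ from the universal property of the group completion, then identify its kernel with a failure of IBN.

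\emph{Existence of the map.} The assignment $n\mapsto [R^n]$ is a monoid homomorphism $\mathbb N\to K_0(R)$. Indeed, $R^{m+n}\cong R^m\oplus R^n$, and Lemma 1.1 (equivalently, the defining relation $[P\oplus Q]=[P]+[Q]$) gives $[R^{m+n}]=[R^m]+[R^n]$. Since $\mathbb Z$ is the group completion of $\mathbb N$, the universal property of $G(M)$ from the start of \S 1 extends this uniquely to a group homomorphism $\phi:\mathbb Z\to K_0(R)$ with $\phi(n)=[R^n]$ for $n\ge 0$ and $\phi(-n)=-[R^n]$.

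\emph{Reduction to stable isomorphism.} The key tool is the standard description of equality in the group completion of a commutative monoid: $[P]=[Q]$ in $K_0(R)$ if and only if $P\oplus T\cong Q\oplus T$ for some $T\in\mathrm{Proj}(R)$. I would justify this quickly. Realize $G(M)$ as pairs $(a,b)$ modulo $(a,b)\sim(c,d)$ iff $a+d+t=b+c+t$ for some $t$; this quotient is a group satisfying the same universal property, so it is isomorphic to $G(M)$. By the previous lemma, or directly by Proposition VIII.6.4, we may enlarge $T$ to a free module: choose $K$ with $T\oplus K\cong R^k$ and add $K$ to both sides. So $[P]=[Q]$ if and only if $P\oplus R^k\cong Q\oplus R^k$ for some $k$.

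\emph{Both directions.} Suppose $R$ has IBN and $\phi(m-n)=0$ with $m\ge n$. Then $[R^m]=[R^n]$, so $R^{m+k}\cong R^{n+k}$ for some $k$, and IBN forces $m+k=n+k$, i.e.\ $m=n$; hence $\phi$ is injective. Conversely, suppose IBN fails, so $R^m\cong R^n$ for some $m\ne n$. Then $\phi(m-n)=[R^m]-[R^n]=0$ with $m-n\ne 0$, so $\phi$ is not injective. (For the zero ring, IBN fails and $K_0=0$, which is consistent.)

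The main obstacle is the middle step. The presentation of $G(M)$ by generators and relations given in the paper does not directly say when two classes coincide, so one must establish the pairs model and its equality criterion $a+t=b+t$. Everything else is formal.
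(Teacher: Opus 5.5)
Your proof is correct and follows the same route as the paper: the map comes from the universal property of group completion, and injectivity is reduced to IBN. Your middle step, that $[P]=[Q]$ in $K_0(R)$ means $P\oplus T\cong Q\oplus T$ with $T$ enlargeable to a free module, is what justifies the paper's unargued claim that $\mathbb Z\to K_0(R)$ is injective iff $\mathbb N\to\mathrm{Proj}(R)$ is; that claim needs such an argument, since group completion does not preserve injectivity for general monoid maps.
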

\begin{proof}
    The first statement follows since $\mathbb Z$ is the group completion of $\mathbb N$ and $K_0(R)$ is the group completion of $\mathrm{Proj}(R)$. 

    For the second point, observe that the group homomorphism is injective if and only if the monoid map $\mathbb N\to \mathrm{Proj}(R)$ is. Injectivity of this map means that if $m\ne n$, then $R^n\not\cong R^m$. This is same as the IBN property.
\end{proof}

We would like to get rid of this structure, because ``rank'' is not really telling us anything interesting. We can do this by simply taking the cokernel of the map $\mathbb Z\to K_0(R)$. And in the case where $R$ is ``sufficiently nice'' (e.g. an integral domain), this indeed is a good construction.

However, when $R$ is not an integral domain, the rank of a projective $R$-module is no longer an integer; it is a continuous function from $\mathrm{Spec}(R) \to \mathbb Z$. 

For the remainder of the section, suppose $R$ is a commutative ring.

We let $H_0(R) = [\mathrm{Spec}(R),\mathbb Z]$ to be the space of all such continuous maps. If $R$ is an integral domain, then $\mathrm{Spec}(R)$ is connected, so $H_0(R)\cong \mathbb Z$. Then we have a rank map $\mathrm{Proj}(R) \to H_0(R)$. By the properties
\[\mathrm{rk}(P\oplus Q) = \mathrm{rk}(P) + \mathrm{rk}(Q) \quad \text{and}\quad \mathrm{rk}(P\otimes Q) = \mathrm{rk}(P)\cdot \mathrm{rk}(Q),\]
it follows that rank defines a semiring map. 

It turns out that we can identify $H_0(R)$ with a subgroup of $K_0(R)$: for every $f\in [\mathrm{Spec}(R), \mathbb N]$ we can associate a projective module $R^f$ with rank $f$. This means that $H_0(R)$ is in fact a direct summand of $K_0(R)$, so we can write
\[K_0(R) \cong H_0(R)\oplus \widetilde{K}_0(R),\]
for an ideal $\widetilde{K}_0(R)$ of $K_0(R)$.
\begin{defn}
    $\widetilde{K}_0(R)$ is called the \textit{reduced Grothendieck group} of $R$.
\end{defn}
This group shows up in many places.

\begin{ex}
    For an odd prime $p$, let $R$ be the quotient ring \[\frac{\mathbb Z[x]}{(1+x+\cdots + x^{p-1})}.\] The reduced Grothendieck group, $\widetilde{K}_0(R) =K(R)/\mathbb Z$, is a finite group, and it is related to Fermat's Last Theorem. Krummer proved that Fermat's equation \[x^p + y^p = z^p\] has no nontrivial integer solutions, if the group $\widetilde{K}_0(R)$ has no $p$-torsion.
\end{ex}

\medskip
But most importantly, these Grothendieck groups are the beginning of the study of topological $K$-theory.

\section{An Introduction to $K$-Theory}

One of the central themes of modern mathematics is the attempt to classify geometric or algebraic objects up to suitable notions of equivalence. For example, one may wish to classify vector bundles on a topological space, modules over a ring, or coherent sheaves on an algebraic variety. Such classification problems are usually far too complicated to solve directly. The philosophy of $K$-theory is to replace these objects by algebraic invariants that retain essential structural information while being easier to study.

The origins of $K$-theory lie in the work of Grothendieck, who introduced Grothendieck groups in his proof of the Grothendieck--Riemann--Roch theorem. Since then, $K$-theory has developed into a vast subject with deep connections to algebra, topology, geometry, number theory, and mathematical physics.

In this section we discuss \emph{topological $K$-theory}, which builds on the discussion in \S 4. Our presentation loosely follows \cite{karoubi2006k}. Before introducing $K$-theory itself, we first need some basic ideas from algebraic topology, especially homotopy theory.

\subsection{A Primer on Homotopy Theory}

Homotopy theory studies topological spaces up to continuous deformation. Roughly speaking, two spaces are considered equivalent if one can be continuously deformed into the other without tearing or gluing.
The basic notion is that of a \emph{homotopy} between maps. Let $X$ and $Y$ be topological spaces, and let
\[
f,g:X\to Y
\]
be continuous maps. We say that $f$ and $g$ are \emph{homotopic} if there exists a continuous map
\[
F:X\times [0,1]\to Y
\]
such that
\[
F(x,0)=f(x), \qquad F(x,1)=g(x)
\]
for all $x\in X$.

Intuitively, the parameter $t\in [0,1]$ plays the role of ``time,'' and the map $F$ describes a continuous deformation of $f$ into $g$.

This notion leads naturally to a corresponding notion for spaces themselves. We say that topological spaces $X$ and $Y$ are \emph{homotopy equivalent} if there exist continuous maps
\[
f:X\to Y,
\qquad
g:Y\to X
\]
such that
\[
g\circ f \simeq \id_X,
\qquad
f\circ g \simeq \id_Y,
\]
where $\simeq$ denotes homotopy. Thus $X$ and $Y$ have the same ``shape'' from the viewpoint of homotopy theory.

For example, a circle with a line segment attached to it is homotopy equivalent to a circle: the extra segment can simply be contracted to a point. On the other hand, a circle is not homotopy equivalent to a sphere, since the circle contains a one-dimensional ``hole'' that the sphere does not.

It is not difficult to show that both notions of homotopy define equivalence relations.

\medskip

In algebraic topology, one often works with \emph{based spaces}. A based space is a topological space together with a distinguished point $x_0\in X$, called the \emph{base point}. A map
\[
f:(X,x_0)\to (Y,y_0)
\]
is called \emph{base-point preserving} if
\[
f(x_0)=y_0.
\]

When defining homotopies between such maps, we additionally require that the base point remain fixed throughout the deformation:
\[
F(x_0,t)=y_0
\qquad
\text{for all } t\in [0,1].
\]

The most important examples arise from maps out of spheres. Recall that the $p$-dimensional sphere is defined by
\[
S^p
=
\left\{
(x_1,\dots,x_{p+1})\in \mathbb R^{p+1}
:
x_1^2+\cdots+x_{p+1}^2=1
\right\}.
\]

For small values of $p$, these are familiar objects:

\begin{itemize}
    \item $S^0=\{-1,1\}$ consists of two isolated points;
    \item $S^1$ is the circle;
    \item $S^2$ is the ordinary sphere in $\mathbb R^3$;
    \item $S^3$ is a three-dimensional sphere sitting inside $\mathbb R^4$.
\end{itemize}

\medskip

We now extract algebraic invariants from these ideas. Let $X$ be a based topological space with base point $x_0$. The $p$-th \emph{homotopy group} of $X$, denoted
\[
\pi_p(X,x_0),
\]
is the set of homotopy classes of base-point preserving maps
\[
S^p\to X.
\]

When $p=0$, a map $S^0\to X$ simply picks out a point of $X$. Passing to homotopy classes identifies points lying in the same path-connected component, so
\[
\pi_0(X)
\]
is the set of path-connected components of $X$.

The first genuinely interesting case is $p=1$. Then we are considering loops in $X$ based at $x_0$:
\[
\gamma:[0,1]\to X,
\qquad
\gamma(0)=\gamma(1)=x_0.
\]

Given two such loops, we may traverse one after the other, producing a new loop. This operation defines a group structure on
\[
\pi_1(X,x_0),
\]
called the \emph{fundamental group} of $X$.
For example:

\begin{itemize}
    \item $\pi_1(S^2)=0$, since every loop on a sphere can be contracted to a point;
    \item $\pi_1(S^1)\cong \mathbb Z$, since loops around the circle are classified by their winding number.
\end{itemize}

More generally, one can define composition laws on all higher homotopy groups
\[
\pi_2(X),\pi_3(X),\dots.
\]
A remarkable fact is that for $p\ge2$, the groups $\pi_p(X)$ are always abelian.

\medskip

Like the homology groups introduced in \S 4, the homotopy groups measure the ``holes'' of a space. However, they are much subtler invariants. For instance,
\[
H_1(X)\cong \pi_1(X)^{ab},
\]
the abelianization of the fundamental group. Thus homology forgets the noncommutative structure of loops and therefore loses information.

Homotopy groups are among the most important invariants in topology. One of their most famous appearances is in the \emph{Poincar\'e conjecture}. The conjecture asserts that a closed simply connected $3$-manifold is homeomorphic to the $3$-sphere $S^3$. In other words, certain homotopy-theoretic data completely determine the topology of the manifold.

\medskip

Our main interest, however, is a remarkable connection between homotopy theory and $K$-theory known as \emph{Bott periodicity}. This theorem, discovered by Bott, reveals an unexpected periodic pattern in the homotopy groups of classical Lie groups.

Consider the general linear groups
\[
GL_n(\mathbb C),
\]
consisting of invertible complex $n\times n$ matrices. Since $GL_n(\mathbb C)$ is an open subset of the vector space $M_n(\mathbb C)$, it naturally carries the structure of a smooth manifold. Consequently, we may study its homotopy groups
\[
\pi_p(GL_n(\mathbb C)).
\]

There are natural inclusions
\[
GL_n(\mathbb C)\hookrightarrow GL_{n+1}(\mathbb C),
\]
obtained by sending a matrix $A$ to the block matrix
\[
\begin{pmatrix}
A & 0\\
0 & 1
\end{pmatrix}.
\]

A fundamental fact is that these inclusions eventually stabilize the homotopy groups:
\[
\pi_p(GL_n(\mathbb C))
\cong
\pi_p(GL_{n+1}(\mathbb C))
\]
for sufficiently large $n$. We therefore write
\[
\pi_p(\mathrm{GL}(\mathbb C))
\]
for this stable value.

Bott's remarkable discovery was that these stable groups are periodic.

\begin{theorem}[Bott Periodicity]\label{thm: bott}
The stabilized homotopy groups satisfy
\[
\pi_p(\mathrm{GL}(\mathbb C))
\cong
\pi_{p+2}(\mathrm{GL}(\mathbb C)).
\]
\end{theorem}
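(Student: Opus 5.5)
Since the statement concerns only the stable value $\pi_p(\mathrm{GL}(\mathbb C))$, I would first pass to a compact model. Gram--Schmidt (the polar decomposition $A = U\cdot P$ with $P$ positive definite) deformation retracts $GL_n(\mathbb C)$ onto the unitary group $U(n)$, compatibly with the block inclusions. It therefore suffices to prove $\pi_p(U)\cong\pi_{p+2}(U)$, where $U=\varinjlim U(n)$. Since a map from a compact sphere lands in some finite stage, $\pi_p(U)=\varinjlim \pi_p(U(n))$. Stabilization itself I would take from the fibrations $U(n)\to U(n+1)\to S^{2n+1}$. The long exact sequence of homotopy groups, together with $\pi_k(S^{2n+1})=0$ for $k<2n+1$, shows that $\pi_p(U(n))\to\pi_p(U(n+1))$ is an isomorphism for $p<2n$. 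This justifies the definition of the stable group used in \cref{thm: bott}.

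The core step is to produce a homotopy equivalence $\Omega^2 U\simeq U$, or more precisely $\Omega U\simeq \mathbb Z\times BU$ together with $\Omega(\mathbb Z\times BU)\simeq U$. Granting this, the result follows from the adjunction $\pi_{p+2}(X)\cong\pi_p(\Omega^2 X)$. The second equivalence is the easy one. $BU=\varinjlim \mathrm{Gr}_n(\mathbb C^{2n})$, and the fibration $U(n)\to EU(n)\to BU(n)$ with contractible total space gives $\Omega BU(n)\simeq U(n)$; passing to the limit gives $\Omega BU\simeq U$. The first equivalence is genuinely Bott's theorem. I would follow the Morse-theoretic route, or Atiyah's route. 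In Atiyah's version one considers loops in $U(2n)$ from $I$ to $-I$. The minimal geodesics of this space are exactly $e^{\pi i t P}$ with $P$ a projection of rank $n$, so they form the Grassmannian $\mathrm{Gr}_n(\mathbb C^{2n})$. One then shows that every non-minimal geodesic has Morse index at least $2n+2$, which tends to $\infty$. Morse theory then implies that the inclusion of the minimal geodesics into the loop space is $(2n+1)$-connected. Letting $n\to\infty$ yields $\Omega_0 U\simeq BU$, and including all components gives $\Omega U\simeq\mathbb Z\times BU$.

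An alternative that fits the paper's $K$-theoretic theme would be to prove the product theorem $K(X\times S^2)\cong K(X)\otimes K(S^2)$ via clutching functions, as in Atiyah--Bott. The argument runs in three stages. First, approximate a clutching function by Laurent polynomials. Second, linearize it via the standard trick of enlarging the bundle. Third, split it using projections. This gives $\widetilde K(\Sigma^2 X)\cong\widetilde K(X)$. Combined with the identification $\pi_{p}(U)\cong\widetilde K(S^{p+1})$ (complex bundles on $S^{p+1}$ are classified by clutching maps $S^p\to U(n)$), this again yields the periodicity.

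The main obstacle is the index computation in the Morse-theoretic approach, equivalently the linearization step in the clutching approach. Everything else is formal homotopy theory. For the index computation I would first diagonalize the geodesic as $e^{\pi i t A}$ with $A$ Hermitian having eigenvalues that are odd integers. Next I would use the formula for conjugate points in a compact Lie group with bi-invariant metric: the index contributions come from pairs of eigenvalues $\lambda_j>\lambda_k$ and equal $\lambda_j-\lambda_k-2$ counted with complex dimension. A nonminimal geodesic must have some eigenvalue $\le -3$ or $\ge 3$. Finally I would bound the total index from below by $2n+2$ via this formula.
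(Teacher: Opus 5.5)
The paper gives no proof of this theorem. It is quoted as Bott's result and used only as input, via \cref{thm: homotopy and K-theory}, to deduce $\widetilde K_0(C_{\mathbb C}(S^p))\cong\widetilde K_0(C_{\mathbb C}(S^{p+2}))$, so there is no argument to match yours against. Your overall architecture is the standard one and is sound: the polar retraction onto $U(n)$, stabilization from $U(n)\to U(n+1)\to S^{2n+1}$, the formal equivalence $\Omega BU\simeq U$, and the real content placed in $\Omega_0U\simeq BU$. For the statement as posed you do not need space-level equivalences or compatibility of maps as $n\to\infty$. The finite-$n$ connectivity results already give $\pi_{p+1}(BU)\cong\pi_{p+2}(U)$ and $\pi_{p+1}(BU)\cong\pi_p(U)$ degree by degree.

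The Morse-theoretic step, however, is wrong as written.
\begin{itemize}
\item In $U(2n)$ the geodesics from $I$ to $-I$ are $e^{\pi i tA}$ with $A$ Hermitian with odd integer eigenvalues. The minimal ones are $A=2P-I$ for projections $P$ of \emph{every} rank. Your $e^{\pi i tP}$ ends at $I-2P$, not at $-I$.
\item The path space has components distinguished by $\operatorname{tr}A$, and the bound ``index $\ge 2n+2$'' is false outside the component $\operatorname{tr}A=0$. For example, eigenvalues $(3,1,\dots,1,-1)$ give a non-minimal geodesic of index $2$ in the component whose only minimal geodesic is $A=I$.
\item So knowing only that some eigenvalue is $\ge 3$ or $\le -3$ cannot give the bound.
\end{itemize}
The repair is to work in $SU(2n)$; this is Bott's argument as presented by Milnor, while the Atiyah--Bott proof is your clutching alternative. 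Then use $\sum_j\lambda_j=0$ in the count, with the real index $\sum_{\lambda_j>\lambda_k}(\lambda_j-\lambda_k-2)$. Suppose $\lambda_{\max}\ge 3$; the case $\lambda_{\min}\le -3$ follows by $A\mapsto -A$.
\begin{itemize}
\item If at least $n+1$ eigenvalues are negative, the pairs $(\lambda_{\max},\lambda_k)$ with $\lambda_k<0$ already give at least $2n+2$.
\item Otherwise there are $p\ge n$ positive eigenvalues. The zero sum then forces some $\lambda_{\min}\le -3$, and the pairs $(\lambda_j,\lambda_{\min})$ with $\lambda_j>0$ give at least $2(p-1)+4\ge 2n+2$.
\end{itemize}
Finally, $U(m)\cong SU(m)\times S^1$ as spaces gives $\Omega_0U\simeq\Omega SU$.

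Your clutching-function route is correct in outline and closest in spirit to the paper, but it runs the paper's logic backwards. You would first prove the $K$-theoretic periodicity via the product theorem, which gives $\widetilde K(\Sigma^2X)\cong\widetilde K(X)$ for all compact $X$. You would then recover Bott periodicity through the identification in \cref{thm: homotopy and K-theory}, plus Swan's theorem to pass between projective $C_{\mathbb C}(X)$-modules and vector bundles. This is not circular, since the product theorem is proved independently, and it needs only linear algebra and point-set topology. The Morse route instead requires infinite-dimensional Morse theory and curvature computations on Lie groups, but in return gives a space-level statement with explicit connectivity ranges.
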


This theorem is one of the cornerstones of modern topology, and it lies at the heart of topological $K$-theory.

\subsection{Bott Periodicity and $K$-Theory}

We now explain how Bott periodicity is related to Grothendieck groups.

Recall from the previous section that to a ring $R$ one associates its Grothendieck group $K_0(R)$, constructed from finitely generated projective $R$-modules. The philosophy of topological $K$-theory is that vector bundles over a space $X$ should be studied through the Grothendieck group of an associated ring of functions on $X$. Let
\[
C_{\mathbb C}(S^p)
\]
denote the ring of complex-valued continuous functions on the sphere $S^p$. Since this ring is an integral domain, we have a decomposition
\[
K_0(C_{\mathbb C}(S^p))
\cong
\mathbb Z\oplus \widetilde K_0(C_{\mathbb C}(S^p)),
\]
where $\widetilde K_0$ denotes the reduced Grothendieck group.

The striking connection with homotopy theory is the following theorem.

\begin{theorem}\label{thm: homotopy and K-theory}
For every $p\ge1$,
\[
\pi_{p-1}(\mathrm{GL}(\mathbb C))
\cong
\widetilde K_0(C_{\mathbb C}(S^p)).
\]
\end{theorem}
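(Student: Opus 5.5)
The plan is to pass from modules to vector bundles and then use a clutching construction. First I will invoke the Serre--Swan theorem: for a compact Hausdorff space $X$, taking continuous sections identifies complex vector bundles over $X$ with finitely generated projective $C_{\mathbb C}(X)$-modules. This identification respects direct sums (Whitney sum corresponds to $\oplus$). Hence $K_0(C_{\mathbb C}(S^p))$ is the group completion of the monoid $\mathrm{Vect}_{\mathbb C}(S^p)$ of isomorphism classes of bundles. For $p\ge1$ the sphere is connected, so rank is a single integer. The reduced group $\widetilde K_0$ is then the kernel of the rank map, i.e. classes $[E]-[\underline{\mathbb C}^n]$ with $\mathrm{rk}\,E=n$.

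Next I will classify bundles on $S^p$ by clutching. Write $S^p=D^p_+\cup D^p_-$, with the two hemispheres meeting along the equator $S^{p-1}$. Any bundle of rank $n$ over a contractible disk is trivial, since homotopic maps pull back isomorphic bundles. So a rank-$n$ bundle $E$ is obtained by gluing two trivial bundles $D^p_\pm\times\mathbb C^n$ along a clutching map $f:S^{p-1}\to GL_n(\mathbb C)$. I will show that $E_f\cong E_g$ exactly when $f$ and $g$ are homotopic. Here the base points may be dropped because $GL_n(\mathbb C)$ is path connected: $\pi_0$ is trivial, so for $p=1$ one gets trivial bundles, and free and based homotopy classes agree after stabilization, since $\pi_1(\mathrm{GL})$ acts trivially on the stable groups. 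The outcome is a bijection
\[
\mathrm{Vect}^n_{\mathbb C}(S^p)\cong [S^{p-1},GL_n(\mathbb C)].
\]

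Then I will stabilize. Adding a trivial line bundle corresponds to replacing $f$ by $\begin{pmatrix} f&0\\0&1\end{pmatrix}$, which is exactly the inclusion $GL_n\hookrightarrow GL_{n+1}$ used to define $\pi_{p-1}(\mathrm{GL}(\mathbb C))$. Every element of $\widetilde K_0$ is of the form $[E]-[\underline{\mathbb C}^n]$, and two such elements agree iff $E\oplus\underline{\mathbb C}^m\cong E'\oplus\underline{\mathbb C}^m$ for some $m$. This yields a bijection
\[
\widetilde K_0(C_{\mathbb C}(S^p))\cong\varinjlim_n[S^{p-1},GL_n(\mathbb C)]=\pi_{p-1}(\mathrm{GL}(\mathbb C)).
\]
It remains to check additivity. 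Direct sum of bundles corresponds to the block sum $f\oplus g$, and the standard rotation homotopy $\begin{pmatrix} f&0\\0&g\end{pmatrix}\simeq\begin{pmatrix} fg&0\\0&1\end{pmatrix}$ shows that block sum agrees with pointwise multiplication. Pointwise multiplication in turn agrees with the homotopy-group operation by the Eckmann--Hilton argument, using that $GL$ is a topological group.

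The main obstacle is the clutching classification in the second paragraph. It needs two facts: the homotopy invariance of pullbacks, which gives triviality over disks and independence of the gluing from the choice of homotopy class, and the argument that any two trivializations over a disk differ by a nullhomotopic map $D^p\to GL_n$. I would prove homotopy invariance first, using compactness of $X\times[0,1]$ and a partition-of-unity argument, and then deduce the rest formally. Serre--Swan can be quoted as a black box.
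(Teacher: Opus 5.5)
The paper gives no proof of this theorem. It is quoted (the section says it loosely follows Karoubi) and used only to recast Bott periodicity. The closest the paper comes is the later, equally unproved, identification $K_n(A)\cong\pi_{n-1}(\mathrm{GL}(A))$ for Banach algebras. The statement is its case $A=\mathbb C$, because the unitization of $C_0(\mathbb R^p)$ is $C_{\mathbb C}(S^p)$ and the map to $K_0(\mathbb C)=\mathbb Z$ is evaluation at the point at infinity, i.e.\ the rank.

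So there is no paper argument to match. Your argument is the standard elementary proof and is correct: Serre--Swan, clutching over the two hemispheres, stabilization, then block sum $\simeq$ pointwise product $\simeq$ loop sum. It also supplies the right justification for the splitting $K_0\cong\mathbb Z\oplus\widetilde K_0$. The paper claims $C_{\mathbb C}(S^p)$ is an integral domain, which is false, since bump functions with disjoint supports multiply to $0$. What is actually needed is your observation that $S^p$ is connected for $p\ge 1$. Hence $C_{\mathbb C}(S^p)$ has no nontrivial idempotents and the rank is a single integer.

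There are two small points to tighten.

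First, your base-point discussion is more roundabout than necessary. For any path-connected topological group $G$, the assignment $f\mapsto f(x_0)^{-1}f$ identifies free classes $[S^{p-1},G]$ with $\pi_{p-1}(G)$ already before stabilizing. Applying the same formula to free homotopies, $F_t\mapsto F_t(x_0)^{-1}F_t$, shows the identification is well defined. So you do not need the triviality of the $\pi_1$-action on the stable groups.

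Second, the equality criterion in $\widetilde K_0$ should allow different trivial summands. The correct statement is $[E]-[\underline{\mathbb C}^n]=[E']-[\underline{\mathbb C}^{n'}]$ if and only if $E\oplus\underline{\mathbb C}^a\cong E'\oplus\underline{\mathbb C}^b$ for some $a,b$, necessarily with $n+a=n'+b$. This relies on every finitely generated projective module being a summand of a free one. That is automatic on the module side of Serre--Swan, and is equivalent to every bundle over a compact space having a complement. It is exactly what makes $\widetilde K_0(C_{\mathbb C}(S^p))$ the colimit of $\mathrm{Vect}^n_{\mathbb C}(S^p)$ under $E\mapsto E\oplus\underline{\mathbb C}$, which your stabilization step uses.
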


Thus the stable homotopy groups of the infinite general linear group can be interpreted as Grothendieck groups of rings of functions on spheres.

From this perspective, Bott periodicity becomes a statement about $K$-theory.

\begin{cor}
For every $p$,
\[
\widetilde K_0(C_{\mathbb C}(S^p))
\cong
\widetilde K_0(C_{\mathbb C}(S^{p+2})).
\]
\end{cor}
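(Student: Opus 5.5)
The corollary is a formal consequence of \cref{thm: homotopy and K-theory} combined with Bott periodicity (\cref{thm: bott}), so I would simply chain the two isomorphisms. Fix $p\ge 1$. Applying \cref{thm: homotopy and K-theory} to $S^p$ gives
\[
\widetilde K_0(C_{\mathbb C}(S^p))\cong \pi_{p-1}(\mathrm{GL}(\mathbb C)).
\]
Applying \cref{thm: bott} with index $p-1$ gives
\[
\pi_{p-1}(\mathrm{GL}(\mathbb C))\cong \pi_{p+1}(\mathrm{GL}(\mathbb C)).
\]
Applying \cref{thm: homotopy and K-theory} again, now to $S^{p+2}$ (legitimate since $p+2\ge 1$), gives
\[
\pi_{p+1}(\mathrm{GL}(\mathbb C))\cong \widetilde K_0(C_{\mathbb C}(S^{p+2})).
\]
Composing the three isomorphisms proves the claim for all $p\ge 1$.

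The only point needing care is the range of $p$. The phrase ``for every $p$'' must be read as $p\ge 1$, because \cref{thm: homotopy and K-theory} is stated only in that range. If one also wants $p=0$, I would handle it directly. $S^0$ consists of two points, so $C_{\mathbb C}(S^0)\cong\mathbb C\times\mathbb C$. Every finitely generated projective module over this ring splits as a pair of vector spaces, so $K_0\cong\mathbb Z^2$, and this is exactly $H_0=[\mathrm{Spec},\mathbb Z]$. Hence $\widetilde K_0(C_{\mathbb C}(S^0))=0$.

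On the other side, $\widetilde K_0(C_{\mathbb C}(S^2))\cong\pi_1(\mathrm{GL}(\mathbb C))\cong\mathbb Z$. So the case $p=0$ actually fails with the reduced group as defined in the paper. This is why I would state the corollary explicitly for $p\ge1$, where the above chain works. For $p=0$ the corresponding true statement would require the convention $\widetilde K_0(C_{\mathbb C}(S^0))$ computed relative to a base point, which gives $\mathbb Z$.

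I expect no real obstacle beyond this bookkeeping of indices and the domain of validity. The substantive content, namely Bott periodicity and the identification of $\widetilde K_0$ of functions on spheres with stable homotopy of $\mathrm{GL}$ via clutching functions, is assumed from the two earlier theorems.
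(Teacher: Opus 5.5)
Your proof is correct and follows the paper's route. The paper gives no separate proof; it just says that, through the identification $\pi_{p-1}(\mathrm{GL}(\mathbb C))\cong\widetilde K_0(C_{\mathbb C}(S^p))$, Bott periodicity becomes this statement, and your chain of three isomorphisms is exactly that argument. Your range caveat is also right: the identification is only stated for $p\ge 1$. With the paper's definition of $\widetilde K_0$ (quotienting by $H_0=[\mathrm{Spec},\mathbb Z]\cong\mathbb Z^2$ for $\mathbb C\times\mathbb C$), the case $p=0$ gives $0\not\cong\mathbb Z$, so ``for every $p$'' has to be read as $p\ge 1$.
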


In other words, the $K$-theory of spheres is periodic with period $2$.

There are analogous statements over $\mathbb R$, where the corresponding periodicity has period $8$ rather than $2$. This phenomenon, known as \emph{real Bott periodicity}, plays a fundamental role throughout topology and operator algebras.

\subsection{Higher $K$-Theory}
Throughout this paper, we have denoted Grothendieck groups by $K_0$. This notation is not accidental: it suggests the existence of further groups
\[
K_1,\ K_2,\ K_3,\ \dots,
\]
and indeed these groups form the subject of \emph{higher $K$-theory}.

The passage from $K_0$ to higher $K$-groups is one of the great conceptual expansions of Grothendieck's original idea. What began as a method for linearizing categories eventually developed into a deep theory connecting topology, algebra, number theory, operator algebras, and arithmetic geometry.

From a modern perspective, higher $K$-theory measures phenomena invisible to $K_0$. While $K_0$ records additive information about objects, the groups $K_1,K_2,\dots$ detect increasingly subtle geometric and homotopical structure.

There are many approaches to higher $K$-theory. Quillen's definition uses homotopy theory and classifying spaces; Waldhausen's construction uses categories of complexes; algebraic geometers study higher $K$-groups of schemes; and topologists interpret them through Bott periodicity and generalized cohomology theories. 

Rather than attempting a complete development, we conclude with a brief glimpse of the topological viewpoint, continuing the discussion of Bott periodicity from the previous section.


The first step is to extend the definition of Grothendieck groups to rings without identity.

Let $A$ be a $k$-algebra, where $k$ is a commutative ring with identity. Even if $A$ itself has no multiplicative identity, we can formally adjoin one by defining
\[
A^+=A\times k
\]
with multiplication
\[
(a,\lambda)(a',\lambda')
=
(aa'+\lambda a'+\lambda' a,\lambda\lambda').
\]

The element $(0,1)$ serves as the identity of $A^+$.
We then define
\[
K_0(A)
\]
to be the kernel of the natural map
\[
K_0(A^+)\to K_0(k).
\]

Intuitively, this construction isolates the information contributed by $A$ itself, removing the artificially adjoined identity component.

A remarkable fact is that this definition is canonical.

\begin{theorem}
The definition of $K_0(A)$ above is independent of the choice of the ground ring $k$.
\end{theorem}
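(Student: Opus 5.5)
The plan is to compare an arbitrary ground ring $k$ with the universal choice $k=\mathbb Z$. Every $k$-algebra $A$ is in particular a $\mathbb Z$-algebra, so we have two unitalizations, $A^+_{\mathbb Z}=A\times\mathbb Z$ and $A^+_k=A\times k$. There is a canonical unital ring map
\[
\varphi:A^+_{\mathbb Z}\to A^+_k,\qquad (a,n)\mapsto (a,n\cdot 1_k),
\]
compatible with the augmentations $A^+_{\mathbb Z}\to\mathbb Z$ and $A^+_k\to k$ and with the structure map $\mathbb Z\to k$. Since $K_0$ of rings is functorial via extension of scalars $P\mapsto S\otimes_R P$, and this preserves finitely generated projectives and direct sums, we obtain a commutative square. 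The top row is $K_0(A^+_{\mathbb Z})\to K_0(\mathbb Z)$, the bottom row is $K_0(A^+_k)\to K_0(k)$, and the vertical maps are $\varphi_*$ and $K_0(\mathbb Z)\to K_0(k)$. This square induces a homomorphism on kernels, $\ker_{\mathbb Z}\to\ker_k$, and the goal is to show it is an isomorphism. Independence of $k$ then follows, since any two choices are both identified with the $\mathbb Z$-version.

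Both unital rings are split extensions of their augmentation ring by the same ideal $A$: in each, $A$ sits as a two-sided ideal with quotient $\mathbb Z$ or $k$, and the section $\mathbb Z\to A^+_{\mathbb Z}$, resp.\ $k\to A^+_k$, splits the quotient. The key step is an \emph{excision} description of the kernel that depends only on the ideal $A$ and not on the ambient unital ring. I would describe $\ker(K_0(R)\to K_0(R/I))$ for a split surjection $R\to R/I$ in terms of idempotent matrices. Using the lemma that every element of $K_0$ can be written as $[P]-[R^n]$, and realizing $P$ as the image of an idempotent $e\in M_N(R)$, a class in the kernel is represented by a pair of idempotents $e,f\in M_N(R)$ with $e\equiv f \pmod{M_N(I)}$. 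Using the splitting, one can arrange that $f=\bar e$ is the image of $e$ under reduction followed by the section. The resulting class $[e]-[\bar e]$ depends only on idempotents of $M_N(R)$ whose difference from an idempotent with entries in the base ring lies in $M_N(A)$, modulo conjugation by invertible matrices, stabilization, and addition.

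Then I would check that $\varphi$ induces a bijection on these data. Idempotents of $M_N(A^+_{\mathbb Z})$ congruent to a scalar idempotent of $M_N(\mathbb Z)$ map to those of $M_N(A^+_k)$ congruent to a scalar idempotent of $M_N(k)$. The step I expect to be the \textbf{main obstacle} is surjectivity and injectivity here, because the scalar parts genuinely differ: $M_N(k)$ has many more idempotents than $M_N(\mathbb Z)$. My first attempt would be to use stabilization and conjugation to replace the reference idempotent $\bar e$ by a standard diagonal idempotent $1_r\oplus 0$, after adding a complement. Concretely, replace $e$ by $e\oplus(1-\bar e)$, conjugated by the invertible scalar matrix realizing $\bar e\oplus(1-\bar e)\cong 1_N\oplus 0$. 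After this, every class is represented by $e$ with $e\equiv 1_r\oplus 0 \pmod{M_N(A)}$, which has integer scalar part. Such $e$ visibly lie in the image of $M_N(A^+_{\mathbb Z})$, which gives surjectivity.

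For injectivity, I would show that an equivalence in the $k$-setting, given by conjugation by some $g\in GL(A^+_k)$, can be replaced by one with $g\equiv 1 \pmod{M(A)}$. This uses the standard trick of multiplying by the inverse of the scalar part, together with the Whitehead-type lemma that $\mathrm{diag}(u,u^{-1})$ is a product of elementary matrices. These elementary matrices lift to $A^+_{\mathbb Z}$. This is the most delicate point, and I would budget most of the effort there.
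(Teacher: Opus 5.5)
The paper states this theorem without proof (it defers to Karoubi), so your outline has to stand on its own. Its architecture is the standard one and is sound:
the reduction to $k=\mathbb Z$ via $\varphi$;
the normal form $[e]-[1_r\oplus 0]$ with $e\equiv 1_r\oplus 0 \pmod{M(A)}$, obtained from the projection $x\mapsto x-s_*\varepsilon_*x$, the identity $[\bar e]-[e]=[1-e]-[1-\bar e]$, and conjugation by the self-inverse scalar matrix $\left(\begin{smallmatrix}\bar e & 1-\bar e\\ 1-\bar e & \bar e\end{smallmatrix}\right)$;
and surjectivity.
In the surjectivity step, ``visibly'' should be replaced by a check, since $\varphi$ need not be injective (e.g.\ if $\operatorname{char}k>0$). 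The lift $(1_r\oplus 0)+a$, with the same $a\in M_N(A)$, is idempotent because the $A$-component of a product is given by the same formula over $\mathbb Z$ as over $k$.

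The genuine gap is injectivity, which you rightly call the crux but do not carry out, and the tool you propose misfires. If the $u$ in your Whitehead step is the scalar part $\bar g\in GL(k)$, the elementary factors have entries in $k$. They do not lift to $A^+_{\mathbb Z}$ unless $\mathbb Z\to k$ is onto; take $k=\mathbb Q$ and $1+\tfrac12 E_{12}$. If instead $u\equiv 1\pmod{M(A)}$, then $u$ already lifts directly and Whitehead is superfluous.

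What you need is the observation that $A^+_{\mathbb Z}\cong A^+_k\times_k\mathbb Z$ is a pullback: a matrix over $A^+_{\mathbb Z}$ is determined by its image under $\varphi$ together with its integer scalar part. With this, injectivity is short.

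Take $y=[e]-[p]\in\ker_{\mathbb Z}$ in normal form, with $\varepsilon_{\mathbb Z}(e)=p=1_r\oplus 0$ and $\varphi_*(y)=0$. Then there are $m$ and $g\in GL_M(A^+_k)$ with $g\,\varphi(e')\,g^{-1}=p'$, where $e'=e\oplus 1_m\oplus 0$ and $p'=p\oplus 1_m\oplus 0=\varepsilon_{\mathbb Z}(e')$.

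Applying $\varepsilon_k$ gives $\bar g\,p'\,\bar g^{-1}=p'$, so $s_k(\bar g)$ commutes with the \emph{scalar} idempotent $p'$. Hence $h=s_k(\bar g)^{-1}g$ still satisfies $h\,\varphi(e')\,h^{-1}=p'$, and $h=1+\alpha$, $h^{-1}=1+\beta$ with $\alpha,\beta\in M_M(A)$. By the pullback property, $1+\alpha$ and $1+\beta$ define mutually inverse matrices over $A^+_{\mathbb Z}$ that conjugate $e'$ to $p'$. So $[e]=[p]$ in $K_0(A^+_{\mathbb Z})$.

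The point your sketch leaves implicit is that dividing $g$ by its scalar part is legitimate exactly because the target idempotent is scalar and equals $\varepsilon(e')$. For two non-scalar idempotents it would not go through directly, which is presumably why you reached for Whitehead.

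Equivalently, Milnor's Mayer--Vietoris sequence for this pullback square gives the isomorphism of kernels at once, since $K_1(A^+_k)\to K_1(k)$ is split surjective and so the boundary map vanishes.

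Finally, your ``excision description of the kernel depending only on $A$'' is a restatement of the theorem, not an input to it. It is the comparison argument that proves it.
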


One important consequence is functoriality: every ring homomorphism
\[
A\to B
\]
induces a homomorphism
\[
K_0(A)\to K_0(B),
\]
even if the rings are nonunital or the map does not preserve identities.

Moreover, this generalized Grothendieck group satisfies a weak form of exactness.

\begin{theorem}\label{thm: exactness of K0}
An exact sequence
\[
0\longrightarrow A\longrightarrow B\longrightarrow C\longrightarrow 0
\]
of rings induces an exact sequence
\[
K_0(A)\longrightarrow K_0(B)\longrightarrow K_0(C).
\]
\end{theorem}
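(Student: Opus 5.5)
The plan is to work with idempotent matrices rather than abstract projective modules. The exact sequence is read as: $A$ is a two-sided ideal of $B$ and $C=B/A$. I will use the standard description of $K_0$ of a unital ring $R$: classes of idempotents $e\in M_n(R)$, with $[e]=[\operatorname{im} e]$, where idempotents are identified under conjugation and stabilization $e\sim e\oplus 0$. Every finitely generated projective module is $\operatorname{im}(e)$ for some idempotent $e$, which is the matrix form of Proposition VIII.6.4 (every projective is a summand of a free module). In this language $K_0(A)$ is the set of differences $[e]-[f]$ with $e,f$ idempotent matrices over $A^+$ and $\bar e=\bar f$ over $k$. One may even normalize so that $\bar e$ and $\bar f$ equal a standard $\mathrm{diag}(1,\dots,1,0,\dots,0)$.

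First I will check that the composite $K_0(A)\to K_0(B)\to K_0(C)$ is zero. The composite ring map $A^+\to B^+\to C^+$ factors through $k\to C^+$, because $A$ maps to zero in $C$. An element $x\in K_0(A)$ has image $0$ in $K_0(k)$ by definition, so its image in $K_0(C^+)$ is the image of $0\in K_0(k)$, which is $0$.

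Next comes the reverse inclusion, $\ker\subseteq\operatorname{im}$. Take $x=[e]-[f]\in K_0(B)$, normalized as above, whose image in $K_0(C)$ vanishes. Passing to $C^+$ and using the standard fact that equality of classes in $K_0$ of a unital ring means stable conjugacy, I get, after adding $0$'s and a common $1_m$, an invertible matrix $\bar g\in GL_N(C^+)$ with $\bar g\,\bar e\,\bar g^{-1}=\bar f$.

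The main obstacle is lifting $\bar g$ to an invertible matrix over $B^+$, since units need not lift along surjections. I will use the Whitehead trick: $\mathrm{diag}(\bar g,\bar g^{-1})$ is a product of elementary matrices, elementary matrices lift to elementary matrices, so $\mathrm{diag}(\bar g,\bar g^{-1})$ lifts to some $h\in GL_{2N}(B^+)$. I replace $e$ by $e\oplus 0_N$ and $f$ by $f\oplus 0_N$ and set $e'=heh^{-1}$. Then $[e']=[e]$ in $K_0(B^+)$, and $e'\equiv f \pmod{M_{2N}(A)}$. Consequently $e'$ and $f$ are idempotent matrices over the subring $A^+ + M(A)$, or more precisely over $A^+$ after subtracting the common scalar part, with equal reductions modulo $A$.

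With this, $[e']-[f]$ defines an element of $K_0(A)$ whose image in $K_0(B)$ is $x$. To make this precise I will note that the pullback ring $B^+\times_{C^+}k\cong A^+$ contains both matrices. The fiddly points are the bookkeeping of normalizations, and the independence-of-$k$ theorem, which I use to work with $A^+$ built from the same $k$ as $B^+$.
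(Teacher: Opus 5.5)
The paper states this theorem without proof (the section defers to Karoubi), so your argument has to be judged on its own. The first two steps are sound. The composite $A^+\to B^+\to C^+$ factors through $k$, so the composite on $K_0$ vanishes. The Whitehead trick correctly produces $h\in GL_{2N}(B^+)$ with $e'=h(e\oplus 0_N)h^{-1}\equiv f\oplus 0_N$ modulo $M_{2N}(A)$.

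\textbf{The gap is the final step.} The pullback $B^+\times_{C^+}k$ is the set of $b\in B^+$ whose image in $C^+$ lies in $k$, i.e.\ the copy $A+k\cdot 1$ of $A^+$ inside $B^+$. A matrix lies over it only if its reduction modulo $A$ is a matrix over $k$. Your $e'$ and $f\oplus 0_N$ both reduce to $\bar f\oplus 0_N\in M_{2N}(C^+)$, which is an arbitrary idempotent over $C^+$. Your normalization only controls reductions along $B^+\to k$, not along $B^+\to C^+$. So neither matrix lies over $A^+$, and $[e']-[f\oplus 0_N]$ is not yet an element of $K_0(A^+)$ at all. ``Subtracting the common scalar part'' does not rescue this: there is no common scalar part, and subtraction destroys idempotency. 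A pair of idempotents over $B^+$ that agree modulo $A$ naturally lives over the double $B^+\times_{C^+}B^+$, not over $A^+$. Relating such pairs to $K_0(A)$, as defined via $A^+$, is exactly the excision content of the theorem, and your proposal asserts it rather than proving it.

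The step can be repaired with one more conjugation. Write $f$ for $f\oplus 0_N$, of size $M$, and set $p=e'\oplus(1-f)$ and $q=f\oplus(1-f)$. Then $[p]-[q]=[e']-[f]=x$ in $K_0(B^+)$. Take $u=\begin{pmatrix} f & 1-f\\ 1-f & f\end{pmatrix}$; it satisfies $u^2=1$ and $uqu^{-1}=\mathrm{diag}(1_M,0_M)$. Since $p\equiv q$ modulo the two-sided ideal $M_{2M}(A)$, we get $upu^{-1}\equiv \mathrm{diag}(1_M,0_M)$. Hence $upu^{-1}$ is an idempotent with entries in $A+k\cdot 1\cong A^+$, and its reduction to $k$ is $\mathrm{diag}(1_M,0_M)$. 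So $[upu^{-1}]-[\mathrm{diag}(1_M,0_M)]$ lies in $K_0(A)$, and its image in $K_0(B^+)$ is $[p]-[q]=x$. With this inserted in place of your last paragraph, the argument is complete.
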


At this point, a natural question arises. In homological algebra, exact sequences rarely appear in isolation; they are usually part of a longer exact sequence extending infinitely in both directions. Is there a way to continue the sequence above further to the left?

Higher $K$-theory provides precisely such an extension.


To define higher $K$-groups topologically, we need a setting in which ideas from topology and analysis can interact algebraically. This leads naturally to Banach algebras.

\begin{defn}
A \emph{Banach algebra} over $k=\mathbb R$ or $\mathbb C$ is a $k$-algebra $A$ equipped with a norm $\|\cdot\|$ such that:
\begin{enumerate}[label=$(\arabic*)$]
    \item $A$ is complete with respect to the metric
    \[
    d(a,b)=\|a-b\|;
    \]
    \item multiplication is continuous in the sense that
    \[
    \|ab\|\le \|a\|\,\|b\|
    \]
    for all $a,b\in A$.
\end{enumerate}
\end{defn}

One important example comes from topology. Let $X$ be a locally compact space, and let $C_0(X)$ denote the algebra of continuous functions
\[
f:X\to \mathbb C
\]
that vanish at infinity. Equipped with the supremum norm
\[
\|f\|=\sup_{x\in X}|f(x)|,
\]
this becomes a Banach algebra.
More generally, if $A$ is a Banach algebra and $X$ is a locally compact space, we may form the algebra
\[
A(X)
\]
of continuous functions
\[
f:X\to A
\]
vanishing at infinity. This construction allows topology to enter directly into $K$-theory.


We are now ready to define higher $K$-groups.

\begin{defn}
Let $A$ be a Banach algebra. For $n\ge 0$, define
\[
K_n(A):=K_0(A(\mathbb R^n)).
\]
These groups are called the \emph{higher $K$-groups} of $A$.
\end{defn}

When $n=0$, this recovers the original Grothendieck group:
\[
K_0(A)=K_0(A(\mathbb R^0)).
\]

Thus higher $K$-groups arise from repeatedly adjoining geometric directions. The appearance of Euclidean space in the definition reflects the fundamentally topological nature of the theory.

These groups extend Theorem \ref{thm: exactness of K0} into a full long exact sequence.

\begin{theorem}
The higher $K$-groups satisfy the following properties.
\begin{enumerate}[label=$(\arabic*)$]
    \item If
    \[
    0\longrightarrow A\longrightarrow B\longrightarrow C\longrightarrow 0
    \]
    is an exact sequence of Banach algebras, then there is a long exact sequence
    \[
    \cdots
    \to
    K_{n+1}(C)
    \to
    K_n(A)
    \to
    K_n(B)
    \to
    K_n(C)
    \to
    K_{n-1}(A)
    \to
    \cdots.
    \]

    \item The inclusion
    \[
    A\hookrightarrow A([0,1])
    \]
    induces an isomorphism
    \[
    K_n(A)\cong K_n(A([0,1])).
    \]
\end{enumerate}
\end{theorem}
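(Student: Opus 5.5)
The plan is to establish (2) first, since it is the more elementary statement, and then build (1) out of the three-term exactness of \cref{thm: exactness of K0} by a standard ``suspension and path space'' argument.

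For (2), I will use the fact that $[0,1]$ is contractible. Evaluation at $0$ gives a homomorphism $\mathrm{ev}_0 : A([0,1]) \to A$ with $\mathrm{ev}_0 \circ \iota = \id_A$, where $\iota$ is the inclusion by constant functions. The homotopy $h_s(f)(t) = f(st)$ defines a continuous path of homomorphisms $A([0,1])\to A([0,1])$ joining $\iota\circ\mathrm{ev}_0$ (at $s=0$) to the identity (at $s=1$).

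The key input is then homotopy invariance of $K_0$ for Banach algebras. Close idempotents in $M_n(A^+)$ are similar, so a continuous path of idempotents defines a single class in $K_0$. Applying this to $h_s$ after passing to $(\cdot)\otimes C_0(\mathbb R^n)$ shows that $\iota$ and $\mathrm{ev}_0$ induce mutually inverse maps on $K_n$. This is the one place where analysis enters: one shows $\|e-f\|<1/\|2e-1\|$ forces $e\sim f$ via the explicit invertible $ef+(1-e)(1-f)$.

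For (1), I will first note that $A\mapsto A(\mathbb R^n)$ preserves exact sequences of Banach algebras, because $C_0(\mathbb R^n)$ is nuclear and the sequence splits as Banach spaces locally. So \cref{thm: exactness of K0} gives exactness $K_n(A)\to K_n(B)\to K_n(C)$ in every degree. To extend leftward, I will construct a connecting map $\partial : K_{n+1}(C)\to K_n(A)$ in two steps.

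First form the mapping path algebra $E_f=\{(b,\gamma)\in B\oplus C([0,1)) : \gamma(0)=f(b)\}$. The algebra $C([0,1))$ is contractible, so by (2) it has vanishing $K$-theory.

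Next use the kernel of $E_f\to B$, which is $C((0,1))=C(\mathbb R)$, together with the inclusion $A\to E_f$ given by $a\mapsto(a,0)$. I will show that this inclusion induces an isomorphism on $K_n$. Its cokernel is $C_0([0,1))$, so this follows from three-term exactness combined with contractibility.

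With this in hand, $\partial$ is defined as the composite
\[
K_{n+1}(C)=K_n(C(\mathbb R))\to K_n(E_f)\cong K_n(A).
\]
Exactness at each of the three spots then reduces, by diagram chasing, to three-term exactness applied to the sequences
\[
0\to C(\mathbb R)\to E_f\to B\to 0
\]
and
\[
0\to B(\mathbb R)\to B([0,1))\to \ldots
\]
Throughout, the identification $K_0(A)\cong K_0(A(\mathbb R^0))$ fixes the indexing.

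I expect the main obstacle to be the isomorphism $K_n(A)\cong K_n(E_f)$, the excision-type step. The three-term sequence alone only gives half-exactness, and upgrading to an isomorphism requires knowing that the kernel $C([0,1))$-part is contractible and that the relevant squares commute up to sign. I would attempt it by an explicit homotopy contracting the $[0,1)$ coordinate.
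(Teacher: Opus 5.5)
The paper states this theorem without proof (it is quoted from Karoubi), so your outline has to stand on its own. Part (2) and the mapping-path strategy for (1) are sound. The step you flag, $K_n(A)\cong K_n(E_f)$, is a genuine gap, and neither argument you offer closes it.

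From $0\to A\to E_f\to C([0,1))\to 0$, three-term exactness and $K_n(C([0,1)))=0$ give only \emph{surjectivity} of $K_n(A)\to K_n(E_f)$. Injectivity would need exactness of $K_{n+1}(C([0,1)))\to K_n(A)\to K_n(E_f)$. That is an instance of the long exact sequence you are proving, so ``three-term exactness combined with contractibility'' is circular.

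An explicit homotopy contracting the $[0,1)$-coordinate also fails. Any deformation inside $E_f$ must preserve $\gamma(0)=f(b)$, so $b$ would have to be dragged along a lift of the path $\gamma$ into $B$. The only lifts available are continuous sections of $f$, which are not multiplicative and so give no homotopy of homomorphisms. There is no evident homotopy inverse $E_f\to A$; what you need is a genuine excision statement.

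The missing idea is that an extension with contractible \emph{ideal} gives injectivity, just as one with contractible quotient gives surjectivity. Put
\[
D=\{g\in B([0,1]) : g(1)\in A\},\qquad \rho(g)=(g(0),\,f\circ g)\in E_f .
\]
Three facts then do the work:
\begin{enumerate}[label=$(\roman*)$]
    \item The homotopy $g\mapsto g(t+s(1-t))$ shows that the constant-path map $j:A\to D$ is a $K_n$-isomorphism.
    \item The composite $\rho\circ j$ is exactly your inclusion $a\mapsto(a,0)$.
    \item $\ker\rho=\{g\in A([0,1]) : g(0)=0\}=A((0,1])$, which is contractible.
\end{enumerate}
Three-term exactness for $0\to A((0,1])\to D\to E_f\to 0$ then makes $\rho_*$ injective, hence $K_n(A)\to K_n(E_f)$ is injective. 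Together with your surjectivity, it is an isomorphism.

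Two smaller repairs are also needed.
\begin{enumerate}[label=$(\alph*)$]
    \item Surjectivity of $\rho$, and exactness of $0\to A(\mathbb R^n)\to B(\mathbb R^n)\to C(\mathbb R^n)\to 0$, come from the Bartle--Graves theorem (a continuous, norm-bounded section of $f$). They do not come from nuclearity, and Banach-space quotients need not split (e.g.\ $c_0\subset\ell^\infty$).
    \item Exactness at $K_{n+1}(C)$ is not mere three-term exactness. You need the same excision lemma a second time, for $0\to C(\mathbb R)\to E_f\to B\to 0$. Combine it with $0\to B(\mathbb R)\to E_p\to E_f\to 0$, where $E_p$ is the mapping-path algebra of $E_f\to B$. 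Finally, use a homotopy that slides $B(\mathbb R)$ across the gluing point through $f$, so that $B(\mathbb R)\to E_p$ agrees with $f$ followed by $C(\mathbb R)\to E_p$ up to a reparametrization automorphism.
\end{enumerate}
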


The first statement shows that higher $K$-groups behave analogously to derived functors in homological algebra. The second says that $K$-theory is invariant under homotopy, emphasizing once again that the theory is deeply topological.


The connection with topology becomes even more striking through Bott periodicity.

Recall that in the previous section we encountered periodicity phenomena in the homotopy groups of classical groups. Higher $K$-theory turns out to encode precisely this structure.

In fact, for $n\ge 1$, we can identify
\begin{equation}\label{eq: higher ktheory and homotopy}
K_n(A) \cong \pi_{n-1}(\mathrm{GL}(A)), 
\end{equation}
where $\mathrm{GL}(A)$ denotes the stabilized homotopy group $GL_r(A)$ as $r\to \infty$.

Thus higher $K$-groups are fundamentally homotopical invariants.

This leads directly to the following theorem.

\begin{theorem}
Let $A$ be a complex Banach algebra. Then
\[
K_n(A)\cong K_{n+2}(A).
\]

If $A$ is a real Banach algebra, then
\[
K_n(A)\cong K_{n+8}(A).
\]
\end{theorem}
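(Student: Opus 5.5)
The plan is to reduce the statement to Bott periodicity for stabilized homotopy groups, using the identification \eqref{eq: higher ktheory and homotopy}. For $n\ge 1$ we have $K_n(A)\cong \pi_{n-1}(\mathrm{GL}(A))$ and $K_{n+2}(A)\cong\pi_{n+1}(\mathrm{GL}(A))$. It therefore suffices to prove the Banach-algebra analogue of \cref{thm: bott}:
\[
\pi_{p}(\mathrm{GL}(A))\cong \pi_{p+2}(\mathrm{GL}(A)),
\]
with period $8$ in the real case. The case $n=0$ follows from the case $n=1$ together with the long exact sequence, applied to the suspension sequence
\[
0\to A(\mathbb R)\to A([0,1))\to A\to 0.
\]
The middle term is contractible, so by homotopy invariance (part (2) of the previous theorem) its $K$-groups vanish. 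This gives $K_{n+1}(A)\cong K_n(A(\mathbb R))$, which matches the definition $K_{n+1}(A)=K_0(A(\mathbb R^{n+1}))$. The same mechanism identifies $K_0(A(\mathbb R^2))$ with $K_2(A)$, so the $n=0$ case is the statement $K_0(A)\cong K_0(A(\mathbb R^2))$.

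The core step is to construct the Bott map $\beta\colon K_0(A)\to K_0(A(\mathbb R^2))$ and show it is an isomorphism. I would define $\beta$ as the exterior product with the Bott class $b\in K_0(C_0(\mathbb R^2))\cong \widetilde K_0(C_{\mathbb C}(S^2))$. The class $b$ is $[H]-[1]$, where $H$ is the Hopf line bundle, or equivalently the class of the rank-one projection $p(z)$ on $S^2=\mathbb C\cup\{\infty\}$. Since $A(\mathbb R^2)\cong A\hat\otimes C_0(\mathbb R^2)$, sending an idempotent $e\in M_r(A^+)$ to $e\otimes p - e\otimes 1$ gives a well-defined homomorphism. \cref{thm: homotopy and K-theory} with $p=2$ gives $\widetilde K_0(C_{\mathbb C}(S^2))\cong\pi_1(\mathrm{GL}(\mathbb C))\cong\mathbb Z$, generated by $b$. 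This pins down the scalar case.

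To invert $\beta$, I would follow Atiyah's rotation trick, or the Toeplitz extension proof. Consider the Toeplitz extension
\[
0\to \mathcal K\hat\otimes A\to \mathcal T\hat\otimes A\to A(S^1)\to 0.
\]
Using stability $K_*(\mathcal K\hat\otimes A)\cong K_*(A)$ and the vanishing of the reduced $K$-theory of the Toeplitz part, the boundary map $K_1(A(\mathbb R))\to K_0(A)$ is an isomorphism. One then checks that it is inverse to $\beta$, up to sign, by computing on the generator $b$ and using naturality in $A$. This computation, the index of the Toeplitz operator with symbol $z$ being $-1$, is where I expect the \textbf{main obstacle}. Every other step is formal from the long exact sequence and homotopy invariance, but this one needs genuine analysis: Fredholm index and the contractibility of the Toeplitz algebra's $K$-theory.

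For real Banach algebras I would run the same strategy with $\mathbb R^8$ in place of $\mathbb R^2$. Here the Bott element lives in $K_0(C_0(\mathbb R^8,\mathbb R))\cong\mathbb Z$ and comes from the Clifford module structure of $\mathbb R^8$ given by the octonions. Alternatively, I would accept real Bott periodicity $\pi_p(\mathrm{GL}(\mathbb R))\cong\pi_{p+8}(\mathrm{GL}(\mathbb R))$ and transfer it through \eqref{eq: higher ktheory and homotopy}, exactly as in the complex case.
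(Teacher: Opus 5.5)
\textbf{The real case is not proved.} Neither of your two options gives $K_n(A)\cong K_{n+8}(A)$ for a general real Banach algebra $A$.

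The ``transfer'' option fails because \eqref{eq: higher ktheory and homotopy} identifies $K_n(A)$ with $\pi_{n-1}(\mathrm{GL}(A))$ for the \emph{same} $A$. Periodicity of $\pi_*(\mathrm{GL}(\mathbb R))$ therefore yields only $K_n(\mathbb R)\cong K_{n+8}(\mathbb R)$ for $n\ge 1$, and says nothing about $\mathrm{GL}(A)$. It is also not what you did in the complex case. There $\beta$ is defined for every $A$ and inverted by an $A$-dependent boundary map, and the scalar computation only fixes the normalization. The paper itself only rewrites the theorem via \eqref{eq: higher ktheory and homotopy} and notes that it has the shape of \cref{thm: bott}, which concerns $\mathrm{GL}(\mathbb C)$ alone. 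So the paper supplies no argument for general $A$ either.

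The octonionic option defines a Bott map $K_0(A)\to K_0(A(\mathbb R^8))$ but gives no inverse, and ``the same strategy'' has nothing to run. The Toeplitz extension inverts a shift by two, and over $\mathbb R$ no map $K_0(A)\to K_0(A(\mathbb R^2))$ can be an isomorphism for all $A$: $K_0(\mathbb R)=\mathbb Z$, while $K_0(C_0(\mathbb R^2,\mathbb R))\cong\pi_1(\mathrm{GL}(\mathbb R))\cong\mathbb Z/2$. What is missing is a genuinely eight-dimensional inverse, which needs Clifford-graded machinery. One option is Karoubi's groups $K^{p,q}$ and his fundamental theorem, together with the fact that the Clifford algebra of $\mathbb R^8$ is $M_{16}(\mathbb R)$. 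Another is a Dirac element for $\mathbb R^8$ combined with Atiyah's rotation trick.

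\textbf{Two repairs in the complex case.} First, your claim that $n=0$ follows from $n=1$ via the suspension sequence runs the wrong way. The isomorphism $K_{n+1}(A)\cong K_n(A(\mathbb R))$ turns the case $n$ for $A(\mathbb R)$ into the case $n+1$ for $A$, never into the case $n-1$. Moreover, $K_0(A)\cong K_2(A)\cong\pi_1(\mathrm{GL}(A))$ is exactly the part of periodicity not contained in $\pi_p\cong\pi_{p+2}$ for $p\ge0$. This is harmless only because your core step, once proved for every $A$, gives all $n$ at once when applied to $A(\mathbb R^n)$. So the detour through \eqref{eq: higher ktheory and homotopy} should simply be dropped.

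What remains is essentially Cuntz's Toeplitz proof, which is an actual argument where the paper offers only a reformulation. Its weight, however, lies in showing $K_*(\mathcal T_0\hat\otimes A)=0$ for all $A$, where $\mathcal T_0=\ker(\mathcal T\to C(S^1)\to\mathbb C)$. The index of $T_z$, which you flag as the main obstacle, is classical.

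Second, for a general Banach algebra the extension as written needs adjusting. With the projective tensor product the quotient is $C(S^1)\hat\otimes A$, which in general is only a dense subalgebra of $A(S^1)$; likewise $A(\mathbb R^2)$ differs in general from $A\hat\otimes C_0(\mathbb R^2)$. This is fine for C*-algebras. For Banach algebras you must either pass through smooth or Wiener-type subalgebras and a density theorem, or argue directly with Laurent-polynomial clutching functions in $A(S^1)$ in the style of Atiyah--Bott.
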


Due to \eqref{eq: higher ktheory and homotopy}, this result can be re-formulated as
\[
\pi_{n-1}(\mathrm{GL}(A))
\cong
\pi_{n+1}(\mathrm{GL}(A))
\]
in the complex case. Observe that this matches the form of Bott periodicity stated in \cref{thm: bott}. Thus, one of the deepest results in homotopy theory becomes encoded in the structure of higher $K$-groups.

\begin{rem}
    This theorem lets us formally define $K_n(A)$ for arbitrary negative integers. For example, in the complex case, we can define $K_n(A)$ as $K_{n+2r}(A)$ for $r$ sufficiently large. In particular, an exact sequence
    \[0 \longrightarrow A \longrightarrow B\longrightarrow C\longrightarrow 0\]
    of Banach algebras like above gives us an exact \textit{hexagon}:
    \[\begin{tikzcd}
 K_0(A) \arrow[r]& K_0(B) \arrow[r]  & K_0(C)\arrow[d]\\
 K_1(A) \arrow[u]& K_1(B)   \arrow[l] & K_1(C). \arrow[l]
    \end{tikzcd}\]
    Since every $K$-group is isomorphic to either $K_0$ or $K_1$, this answers the question of extending the original exact sequence in \cref{thm: exactness of K0}, in both the left and right directions.
\end{rem}


\medskip
The development of higher $K$-theory dramatically broadened Grothendieck's original construction. The Grothendieck group $K_0$ records additive relations among objects, but the higher groups capture hidden homotopical and geometric structure invisible at the level of classes alone.

Today, higher $K$-theory appears throughout modern mathematics:
\begin{itemize}
    \item in topology through generalized cohomology theories,
    \item in algebraic geometry through vector bundles and motivic cohomology,
    \item in number theory through regulators and special values of $L$-functions,
    \item in operator algebras through index theory and noncommutative geometry.
\end{itemize}

What began as a simple process of forming formal differences of objects ultimately led to one of the central organizing ideas of modern mathematics.

\section{$\lambda$-Rings and Adams Operations}

The Grothendieck group $K_0(\mathcal C)$ of an abelian category is designed to capture the additive structure coming from short exact sequences. As we saw in \S 4 and \S 5, many Grothendieck groups naturally carry additional structure. In favorable situations, a tensor product operation on the category induces a multiplication on the Grothendieck group, turning it into a ring. For example, the Grothendieck group $K_0(R)$ of finitely generated projective $R$-modules becomes a ring via the tensor product.

However, tensor products are only one part of the rich algebraic structure present in linear algebra and representation theory. There are many other natural operations on vector spaces and modules, most notably the \emph{exterior powers}
\[
\Lambda^k(V)
\]
and the \emph{symmetric powers}
\[
\Sym^k(V).
\]

These constructions satisfy remarkable algebraic identities. For instance, exterior powers behave with respect to direct sums according to the formula
\begin{equation}\label{eq: exterior}
    \Lambda^k(V\oplus W)
\cong
\bigoplus_{i=0}^k
\Lambda^i(V)\otimes \Lambda^{k-i}(W).
\end{equation}
This identity resembles the binomial theorem and suggests that exterior powers should be regarded as higher-order operations compatible with addition.

The theory of $\lambda$-rings formalizes this idea. Roughly speaking, a $\lambda$-ring is a ring equipped with operations that behave like exterior powers. These structures arise naturally in Grothendieck rings, representation rings, and topological $K$-theory. One of their most important consequences is the construction of the \emph{Adams operations}, which play a central role in algebraic topology and homotopy theory.

For the following discussion, we follow \cite{weibel2013kbook}. We begin with the definition.
\begin{defn}
A commutative ring $K$, together with a family of set maps
\[
\lambda^k:K\to K
\qquad (k\ge0),
\]
is called a \emph{$\lambda$-ring} if the following conditions hold:
\begin{enumerate}[label=$(\arabic*)$]
    \item $\lambda^0(x)=1$ and $\lambda^1(x)=x$ for all $x\in K$;
    
    \item For every $x,y\in K$ and every $k\ge0$,
    \[
    \lambda^k(x+y)
    =
    \sum_{i=0}^k
    \lambda^i(x)\lambda^{k-i}(y).
    \]
\end{enumerate}
\end{defn}
Observe that the relation (2) precisely matches the formula \eqref{eq: exterior}.

Before passing to Grothendieck groups, it is convenient to formulate the same idea at the level of semirings.

\begin{defn}
A commutative semiring $M$, together with maps
\[
\lambda^k:M\to M,
\]
is called a \emph{$\lambda$-semiring} if
\[
\lambda^0(x)=1,
\qquad
\lambda^1(x)=x,
\]
and
\[
\lambda^k(x+y)
=
\sum_{i=0}^k
\lambda^i(x)\lambda^{k-i}(y)
\]
for all $x,y\in M$.
\end{defn}

The semiring $\mathrm{Proj}(R)$ of finitely generated projective $R$-modules is therefore a $\lambda$-semiring, with the operations given by
\[
\lambda^k(P)=\Lambda^k(P).
\]


It is often convenient to package the operations $\lambda^k$ into a single generating function.

\begin{lemma}\label{lem: genfunc}
A commutative ring $K$ equipped with operations $\lambda^k:K\to K$ is a $\lambda$-ring if and only if the map
\[
\lambda_t:K\to 1+tK[[t]]
\]
defined by
\[
\lambda_t(x)
=
\sum_{k\ge0}\lambda^k(x)t^k
\]
is a homomorphism from the additive group of $K$ to the multiplicative group of $1+tK[[t]]$.
\end{lemma}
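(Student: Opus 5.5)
The plan is to compare the two conditions directly: write out the homomorphism condition coefficient by coefficient and observe that it is exactly axiom (2). Before that, we need to know that $\lambda_t$ lands in the right place. Condition (1) of the definition gives $\lambda^0(x)=1$, so $\lambda_t(x)$ has constant term $1$ and lies in $1+tK[[t]]$. This set is a multiplicative group, because any power series with constant term $1$ is invertible in $K[[t]]$; one can solve for the coefficients of the inverse recursively. So the phrase ``homomorphism from $(K,+)$ to $(1+tK[[t]],\cdot)$'' makes sense.

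Next we compute the product of two such series using the Cauchy product formula:
\[
\lambda_t(x)\lambda_t(y)=\sum_{k\ge0}\Bigl(\sum_{i=0}^k\lambda^i(x)\lambda^{k-i}(y)\Bigr)t^k .
\]
Two formal power series are equal exactly when all their coefficients agree. Hence the identity $\lambda_t(x+y)=\lambda_t(x)\lambda_t(y)$ holds for all $x,y$ if and only if, for every $k\ge0$,
\[
\lambda^k(x+y)=\sum_{i=0}^k\lambda^i(x)\lambda^{k-i}(y).
\]
This is verbatim condition (2).

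\emph{Forward direction.} If $K$ is a $\lambda$-ring, then condition (1) places $\lambda_t$ in $1+tK[[t]]$, and condition (2) gives multiplicativity. So $\lambda_t$ is a homomorphism.

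\emph{Converse.} Suppose $\lambda_t$ is a homomorphism into $1+tK[[t]]$. Comparing coefficients recovers (2), and landing in $1+tK[[t]]$ gives $\lambda^0(x)=1$. The one genuine obstacle is $\lambda^1(x)=x$: the homomorphism property alone only shows that $\lambda^1$ is additive, not that it is the identity. I would handle this in one of two ways. Either read the lemma, as is standard in Weibel, as presupposing $\lambda^0=1$ and $\lambda^1=\id$ as part of the data of ``operations,'' so that only the additivity axiom is being repackaged. Or add $\lambda^1(x)=x$ explicitly to the hypothesis, noting in a remark that without it the map $\lambda_t\equiv 1$ (all $\lambda^k=0$ for $k\ge1$) is a counterexample. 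I would adopt the first reading and state it explicitly at the start of the proof.
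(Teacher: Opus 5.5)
Your proof is correct and is essentially the paper's argument: expand $\lambda_t(x)\lambda_t(y)$ as a Cauchy product and match the coefficient of $t^k$ against axiom (2). Your caveat is also right. The paper's proof only repackages axiom (2) and silently treats $\lambda^0(x)=1$, $\lambda^1(x)=x$ as given. Your example $\lambda_t\equiv 1$ shows that, without that reading, the converse fails as literally stated.
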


\begin{proof}
The defining identity for a $\lambda$-ring is equivalent to
\[
\lambda_t(x+y)
=
\lambda_t(x)\lambda_t(y).
\]
Indeed,
\begin{align*}
\lambda_t(x)\lambda_t(y)
&=
\left(
\sum_{k\ge0}\lambda^k(x)t^k
\right)
\left(
\sum_{\ell\ge0}\lambda^\ell(y)t^\ell
\right)\\
&=
\sum_{k\ge0}
\left(
\sum_{i=0}^k
\lambda^i(x)\lambda^{k-i}(y)
\right)t^k.
\end{align*}
Comparing coefficients of $t^k$ yields the result.
\end{proof}

The generating function $\lambda_t(x)$ behaves much like an exponential map:
\[
\lambda_t(x+y)=\lambda_t(x)\lambda_t(y).
\]
This viewpoint is extremely useful in the theory of symmetric functions and algebraic topology.

We now explain why Grothendieck rings naturally carry $\lambda$-ring structures.

\begin{prop}
The Grothendieck ring $K_0(R)$ is a $\lambda$-ring, with operations induced by exterior powers:
\[
\lambda^k([P])=[\Lambda^k(P)].
\]
\end{prop}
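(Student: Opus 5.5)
The plan is to build the structure first on the semiring $\mathrm{Proj}(R)$ and then transfer it to the group completion $K_0(R)$ via Lemma~\ref{lem: genfunc}. Under the paper's definition, a $\lambda$-ring only requires $\lambda^0=1$, $\lambda^1=\id$, and the sum formula, so nothing about products or composites of $\lambda$-operations has to be checked.

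\textbf{Step 1: the semiring level.} For a finitely generated projective $P$, the module $\Lambda^k(P)$ is again finitely generated projective. Indeed, if $P\oplus Q\cong R^n$, then by \eqref{eq: exterior} $\Lambda^k(P)$ is a direct summand of $\Lambda^k(R^n)\cong R^{\binom nk}$. The operation is well defined on isomorphism classes because $\Lambda^k$ is a functor. We have $\Lambda^0(P)=R$, which is the unit $[R]$, and $\Lambda^1(P)=P$. The isomorphism \eqref{eq: exterior}, taken for projective modules over a commutative ring, gives the sum formula. Hence $\mathrm{Proj}(R)$ is a $\lambda$-semiring, as already remarked in the text.

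\textbf{Step 2: the generating function.} Define $\lambda_t[P]=\sum_k[\Lambda^k P]t^k\in 1+tK_0(R)[[t]]$. By Step 1 and the coefficient comparison in the proof of Lemma~\ref{lem: genfunc}, which works verbatim in the semiring setting, $\lambda_t$ is a monoid homomorphism from $(\mathrm{Proj}(R),\oplus)$ to the multiplicative group $1+tK_0(R)[[t]]$. This target is an abelian group, since every power series with constant term $1$ is invertible. So by the universal property of group completion, $\lambda_t$ extends uniquely to a group homomorphism $K_0(R)\to 1+tK_0(R)[[t]]$. Concretely, by Lemma~1.2 we can write $x=[P]-[Q]$, and then $\lambda_t(x)=\lambda_t[P]\lambda_t[Q]^{-1}$. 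We define $\lambda^k(x)$ to be the coefficient of $t^k$.

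\textbf{Step 3: the axioms.} Lemma~\ref{lem: genfunc} now gives the sum formula for all $x,y\in K_0(R)$. The normalizations $\lambda^0=1$ and $\lambda^1=\id$ follow because the $t^0$ and $t^1$ coefficients of $\lambda_t$ are additive. For $t^0$ this is clear, and for $t^1$ note that the coefficient of $t$ in a product $(1+a_1t+\cdots)(1+b_1t+\cdots)$ is $a_1+b_1$, while $\lambda^1$ agrees with the identity on the generators $[P]$. The coefficients of $t^k$ for $k\ge 2$ are not additive, which is why the extension has to go through $\lambda_t$ rather than through the individual $\lambda^k$.

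The main obstacle is Step~1, specifically justifying \eqref{eq: exterior} and the projectivity of $\Lambda^k P$ for projective modules over a general commutative ring $R$. I would first verify the isomorphism for free modules using bases: wedge monomials in a basis of $V\oplus W$ split according to how many basis vectors come from $V$. I would then pass to summands by naturality of the map $\bigoplus_i\Lambda^iV\otimes\Lambda^{k-i}W\to\Lambda^k(V\oplus W)$, $v\otimes w\mapsto v\wedge w$. This map is defined for arbitrary modules, and a base-change or localization argument shows it is an isomorphism. Everything after that is formal.
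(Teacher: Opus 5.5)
Your proposal is correct and follows the paper's own route. Both arguments make $\mathrm{Proj}(R)$ a $\lambda$-semiring, extend the additive-to-multiplicative map $\lambda_t$ to $K_0(R)$ through the universal property of group completion, and conclude with Lemma~\ref{lem: genfunc}. You also fill in points the paper leaves implicit: that $\Lambda^k P$ is again finitely generated projective, that \eqref{eq: exterior} holds for projective modules, and the separate check that $\lambda^1=\id$ on all of $K_0(R)$, which Lemma~\ref{lem: genfunc} as stated does not cover.
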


\begin{proof}
The semiring $\mathrm{Proj}(R)$ is a $\lambda$-semiring by $(*)$. Hence the map
\[
\lambda_t:\mathrm{Proj}(R)\to 1+tK_0(R)[[t]]
\]
defined by
\[
\lambda_t(P)
=
\sum_{k\ge0}
[\Lambda^k(P)]t^k
\]
is a monoid homomorphism.
Since $K_0(R)$ is the group completion of $\mathrm{Proj}(R)$, the universal property of group completion implies that $\lambda_t$ extends uniquely to a map
\[
\lambda_t:K_0(R)\to 1+tK_0(R)[[t]]
\]
which is additive-to-multiplicative. By Lemma \ref{lem: genfunc}, this endows $K_0(R)$ with a $\lambda$-ring structure.
\end{proof}

An entirely analogous argument gives the following result.

\begin{prop}
Let $G$ be a finite group. Then the representation ring
\[
R(G)
\]
is a $\lambda$-ring, where
\[
\lambda^k([V])=[\Lambda^k(V)]
\]
for finite-dimensional complex representations $V$ of $G$.
\end{prop}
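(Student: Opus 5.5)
The argument copies the one just given for $K_0(R)$, with the category $\mathrm{Rep}_{\mathbb C}(G)$ in place of $\mathrm{Proj}(R)$. The first step is to check that the set $M$ of isomorphism classes of finite-dimensional complex representations is a $\lambda$-semiring under $\oplus$, $\otimes$ and $\lambda^k(V)=\Lambda^k(V)$.

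Here $G$ acts on $\Lambda^k(V)$ by $g(v_1\wedge\cdots\wedge v_k)=gv_1\wedge\cdots\wedge gv_k$. This action is well defined because the diagonal action on $V^{\otimes k}$ preserves the subspace spanned by tensors with a repeated entry. Moreover, an isomorphism $V\cong V'$ of representations induces $\Lambda^k V\cong \Lambda^k V'$, so $\lambda^k$ is well defined on isomorphism classes. Clearly $\Lambda^0(V)$ is the trivial representation, which is the unit of $R(G)$, and $\Lambda^1(V)=V$.

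The key identity is \eqref{eq: exterior} for representations. I would write down the linear isomorphism
\[
\bigoplus_{i=0}^k \Lambda^i(V)\otimes\Lambda^{k-i}(W)\to \Lambda^k(V\oplus W),\qquad (v_1\wedge\cdots\wedge v_i)\otimes(w_1\wedge\cdots\wedge w_{k-i})\mapsto v_1\wedge\cdots\wedge v_i\wedge w_1\wedge\cdots\wedge w_{k-i}.
\]
This map is bijective by the usual basis argument: choose bases of $V$ and $W$ and compare the standard bases of wedge monomials on both sides. It is $G$-equivariant because $g$ acts diagonally on every factor, so both sides transform identically. The identity therefore holds in $M$, and $M$ is a $\lambda$-semiring.

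Next, by \cref{lem: genfunc} (its computation works verbatim for semirings), the map $\lambda_t:M\to 1+tR(G)[[t]]$, $V\mapsto\sum_k[\Lambda^k V]t^k$, sends $\oplus$ to multiplication of power series. It is thus a monoid homomorphism into the multiplicative group $1+tR(G)[[t]]$. This target is an abelian group, since every power series with constant term $1$ is invertible. Since $R(G)$ is by definition the group completion of $M$, the universal property of the group completion extends $\lambda_t$ uniquely to a homomorphism $R(G)\to 1+tR(G)[[t]]$ from the additive group to the multiplicative group. By \cref{lem: genfunc} again, the coefficients $\lambda^k$ of this extension give $R(G)$ a $\lambda$-ring structure that agrees with $[\Lambda^k V]$ on actual representations.

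The main obstacle is the equivariance and bijectivity of the exterior-power decomposition, since everything else is formal. I expect it to be routine once the map above is written down. Alternatively, one could take the identity for vector spaces, used implicitly for $\mathrm{Proj}(R)$ with $R=\mathbb C$, and observe that the natural isomorphism is functorial in $V$ and $W$, hence commutes with the action of each $g$.
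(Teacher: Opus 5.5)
Your proposal is correct and takes essentially the paper's approach. The paper only says the argument is ``entirely analogous'' to the one for $K_0(R)$: a $\lambda$-semiring of representations, $\lambda_t$ a monoid homomorphism, extension via the universal property of group completion, then \cref{lem: genfunc}. You additionally supply the one non-formal ingredient the paper leaves implicit, the $G$-equivariance of the decomposition \eqref{eq: exterior}, and you also note that axiom $(1)$ on virtual classes is immediate because the coefficient of $t$ in a product of series in $1+tR(G)[[t]]$ is additive.
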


\medskip

We now arrive at one of the most important constructions associated to a $\lambda$-ring: the \emph{Adams operations}. These are certain canonical operations
\[
\psi^k:K\to K
\]
built from the $\lambda$-operations. They behave much better multiplicatively than the operations $\lambda^k$, and they are among the most powerful tools in topological $K$-theory.

Before defining them, we introduce the \emph{augmentation map}
\[
\varepsilon:K\to \mathbb Z,
\]
which plays the role of a dimension or rank function.

For $K_0(R)$, the augmentation map is the rank map
\[
\varepsilon:K_0(R)\to [\mathrm{Spec}(R),\mathbb Z],
\]
while for the representation ring $R(G)$ it is simply the dimension map
\[
\varepsilon:R(G)\to \mathbb Z.
\]

The Adams operations are then defined recursively.

\begin{defn}
The \emph{Adams operations}
\[
\psi^k:K\to K
\]
are defined recursively by
\[
\psi^0(x)=\varepsilon(x),
\qquad
\psi^1(x)=x,
\]
and
\[
\psi^k(x)
=
\lambda^1(x)\psi^{k-1}(x)
-
\lambda^2(x)\psi^{k-2}(x)
+\cdots
+(-1)^{k-1}k\lambda^k(x).
\]
Equivalently,
\[
\psi^k
=
\lambda^1\psi^{k-1}
-
\lambda^2\psi^{k-2}
+\cdots
+
(-1)^{k-1}k\lambda^k.
\]
\end{defn}

These formulas have the same form as     the classical Newton identities relating elementary symmetric polynomials and power-sum symmetric polynomials. In this analogy, $\lambda^k$
plays the role of the elementary symmetric functions, while $\psi^k$ corresponds to the power sums.

The Adams operations are important because, unlike the operations $\lambda^k$, they are ring homomorphisms.

\begin{theorem}
For $K_0(R)$ and $R(G)$, the Adams operations
\[
\psi^k:K\to K
\]
are ring homomorphisms. Moreover,
\[
\psi^j\circ \psi^k
=
\psi^{jk}.
\]
\end{theorem}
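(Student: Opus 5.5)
The approach is to show that in both $K_0(R)$ and $R(G)$ every element is a formal difference of classes of \emph{line elements}, meaning elements $\ell$ with $\lambda_t(\ell)=1+\ell t$. On such elements the Adams operations are simply $\psi^k(\ell)=\ell^k$. Both identities then become formal consequences of additivity.

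\textbf{Step 1: a generating-function form of the definition.} Rewrite the recursive definition of $\psi^k$ as the Newton-type identity
\[
\psi_{-t}(x) \;=\; -\,t\,\frac{d}{dt}\log \lambda_t(x),
\qquad
\psi_{t}(x)=\sum_{k\ge1}\psi^k(x)t^k .
\]
One checks this by multiplying out $\lambda_t(x)\cdot\sum_k(-1)^{k}\psi^k(x)t^k$ and comparing coefficients of $t^k$ with the recursion. By Lemma~\ref{lem: genfunc}, $\lambda_t$ carries sums to products. The logarithmic derivative carries products to sums, so each $\psi^k$ is additive.

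For a line element, $\lambda_t(\ell)=1+\ell t$. Then $-t\frac{d}{dt}\log(1+\ell t)=\sum_k(-1)^k\ell^k t^k$, so $\psi^k(\ell)=\ell^k$. The augmentation is used only to set $\psi^0$ and plays no role in the identities for $k\ge1$.

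\textbf{Step 2: the splitting principle.} This is the main obstacle.

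For $R(G)$ the reduction is cleanest after applying characters. We show $\chi_{\psi^k V}(g)=\chi_V(g^k)$, and characters are faithful by the character-ring corollary. To do this, restrict $V$ to the cyclic group $\langle g\rangle$ and diagonalize $g$ with eigenvalues $\mu_1,\dots,\mu_n$. Then
\[
\chi_{\Lambda^k V}(g)=e_k(\mu_1,\dots,\mu_n),
\]
and the Newton identities give $\chi_{\psi^kV}(g)=\sum_i\mu_i^k=\chi_V(g^k)$. From this formula:
\begin{itemize}
\item multiplicativity holds because $\chi_{V\otimes W}(g^k)=\chi_V(g^k)\chi_W(g^k)$;
\item composition holds because $\chi_V((g^k)^j)=\chi_V(g^{jk})$.
\end{itemize}

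For $K_0(R)$ I would first try to reduce to the case where $P$ is a sum of rank-one projectives. The plan is to pass to a faithfully flat extension $R\to R'$ that splits $P$, either by a flag-bundle construction or by working locally, since $P$ is locally free. One then needs injectivity of $K_0(R)\to K_0(R')$, and I am unsure this holds in general.

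The fallback is purely formal. The identities $\psi^k(xy)=\psi^k(x)\psi^k(y)$ and $\psi^j\psi^k=\psi^{jk}$ are universal polynomial identities in the $\lambda^i(x)$ and $\lambda^i(y)$ once we know how $\lambda^i(xy)$ and $\lambda^i(\lambda^j x)$ are expressed. That is, they follow once $K_0(R)$ is a \emph{special} $\lambda$-ring. To verify the special axioms, evaluate on the generic case: take the representation ring of $GL_n\times GL_m$, or symmetric polynomials in formal roots $x_1,\dots,x_n$, where $\Lambda^k(P\otimes Q)$ and $\Lambda^i(\Lambda^jP)$ are given by universal $\mathrm{GL}$-equivariant constructions. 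Then apply those functorial isomorphisms to $P$ and $Q$.

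\textbf{Step 3: conclusion.} Once reduced to sums of line elements $x=\sum\ell_i$ and $y=\sum m_j$, additivity from Step 1 gives
\[
\psi^k(xy)=\sum_{i,j}(\ell_im_j)^k=\psi^k(x)\psi^k(y),
\]
using that a product of line elements is a line element, since $\Lambda^2(L\otimes M)=0$. Similarly,
\[
\psi^j\psi^k(x)=\sum_i(\ell_i^k)^j=\psi^{jk}(x).
\]
Additivity in both variables then extends these identities from effective classes to all of $K$.
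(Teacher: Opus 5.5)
Your $R(G)$ argument is complete. The identity $\psi_{-t}(x)=-t\,\frac{d}{dt}\log\lambda_t(x)$ does encode the paper's recursion and gives additivity of each $\psi^k$. Then $\chi_{\psi^kV}(g)=\chi_V(g^k)$, together with injectivity of the character map on $R(G)$, yields both multiplicativity and $\psi^j\psi^k=\psi^{jk}$; the case $k=0$, where $\psi^0=\dim$, is immediate. The paper gives no proof of this theorem. It quotes Weibel, where both cases follow from $K_0(R)$ and $R(G)$ being \emph{special} $\lambda$-rings. Your fallback has the same skeleton, but for $K_0(R)$ it leaves unproved exactly the input that carries all the weight: specialness, or equivalently a splitting principle.

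Neither of your two routes supplies it. The local or faithfully flat route fails outright, not merely possibly. A Zariski cover $R\to\prod_i R_{f_i}$ on which $P$ becomes free is faithfully flat, yet it sends $[P]-[R^n]$ to $0$, and this class is nonzero whenever $P$ is not stably free (e.g.\ a nonprincipal ideal in a Dedekind domain). So a useful splitting must produce nontrivial line bundles. That is what the flag bundle $\mathrm{Fl}(P)\to\operatorname{Spec}R$ does, but it is not affine: you must pass to $K_0$ of schemes, and injectivity of $K_0(R)\to K_0(\mathrm{Fl}(P))$ is the projective bundle theorem. That theorem is the missing ingredient.

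The fallback does not close the gap either. Verifying the special identities in $R(\mathrm{GL}_n\times\mathrm{GL}_m)$ over $\mathbb C$, i.e.\ in symmetric polynomials, does not produce natural isomorphisms that can be ``applied to $P$ and $Q$'' over an arbitrary commutative ring. Already the splitting $\Lambda^2(V\otimes W)\cong(\Lambda^2V\otimes\Sym^2W)\oplus(\Sym^2V\otimes\Lambda^2W)$ depends on $2$ being invertible. In characteristic $2$ the natural comparison map kills the nonzero element $(v_1\otimes w_1)\wedge(v_2\otimes w_2)+(v_1\otimes w_2)\wedge(v_2\otimes w_1)$. In general only natural filtrations survive (Cauchy/Koszul-type), and for $\Lambda^i(\Lambda^jP)$ such filtrations are much harder to come by. To make the universal argument honest you would need two things:
\begin{enumerate}
\item faithfulness of characters on the Grothendieck group of $\mathbb Z$-free $\mathrm{GL}_{n,\mathbb Z}$-modules (a theorem of Serre);
\item an exact twisting functor $M\mapsto\mathrm{Isom}(R^n,P)\times^{\mathrm{GL}_n}M$ into $\mathrm{Proj}(R)$, after cutting $\operatorname{Spec}R$ into pieces where $P$ has constant rank.
\end{enumerate}
As written, the $K_0(R)$ half of the theorem is asserted rather than proved.
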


In fact, this theorem holds for a large class of sufficiently well-behaved $\lambda$-rings.

\medskip

The Adams operations have had profound applications in algebraic topology. Most famously, they played a decisive role in the solution of the Hopf invariant one problem and in the study of vector fields on spheres. The key idea is that the Adams operations detect subtle information in topological $K$-theory that ordinary cohomology cannot see.

In particular, by studying the action of the Adams operations on the $K$-theory of spheres, one obtains strong constraints on the topology of manifolds and vector bundles. This illustrates one of the central themes of modern mathematics: algebraic operations on Grothendieck groups can encode deep geometric and topological information.

\section{Conclusion}

The theory of Grothendieck groups begins with a modest observation: many mathematical objects admit a natural operation of addition, even when their full classification is hopelessly complicated. Direct sums of modules, vector bundles, sheaves, or representations endow collections of isomorphism classes with the structure of a commutative monoid, and Grothendieck's construction provides a universal method for adjoining formal inverses. What initially appears to be a simple algebraic device ultimately reveals itself as a unifying principle across large portions of modern mathematics.

The progression of ideas throughout this article reflects a recurring mathematical phenomenon. Classical classification theorems suggest that one should seek complete invariants for algebraic objects, yet outside a small number of rigid settings, such ambitions quickly fail. Modules over arbitrary rings, vector bundles over topological spaces, or coherent sheaves on algebraic varieties form categories too vast and intricate to admit satisfactory classifications. Grothendieck groups arise precisely at this point of failure. Rather than attempting to distinguish objects individually, they record additive information while systematically discarding intractable extension-theoretic complexity.

This philosophy is already visible in the construction of
\[
K_0(R),
\]
where finitely generated projective modules are replaced by formal additive combinations modulo direct-sum relations. In favorable situations this process recovers familiar invariants: dimension over a field, rank over a principal ideal domain, or rank together with ideal-class information over a Dedekind domain. More generally, the Grothendieck group of an abelian category retains only the additive shadow of its objects, collapsing short exact sequences into linear relations. The Jordan--H\"older theorem then explains why, in finite-length categories, Grothendieck groups reduce objects to their composition multiplicities.

What is perhaps most striking is the persistence of the same formal mechanism in seemingly unrelated areas of mathematics. In representation theory, the representation ring
\[
R(G)=K_0(\mathrm{Rep}(G))
\]
encodes representations through additive and multiplicative operations arising from direct sum and tensor product. In topology, the Grothendieck completion of vector bundles produces topological $K$-theory, whose structure is governed by Bott periodicity and whose applications include the Atiyah--Singer index theorem. In algebraic geometry, Grothendieck groups interact with characteristic classes and intersection theory through the Grothendieck--Riemann--Roch theorem. In homological algebra, triangulated and derived categories admit Grothendieck groups in which Euler characteristics become manifestations of additivity relations.

The ubiquity of these constructions suggests that Grothendieck groups should not be viewed merely as bookkeeping devices. They represent a systematic method for extracting linear information from nonlinear mathematical worlds. Indeed, the modern language of categorification and decategorification places Grothendieck groups at the boundary between algebraic structures and the richer categories from which they arise. Passing to $K_0$ forgets morphisms, extensions, and higher homological data, but preserves enough structure to reveal deep and often unexpected invariants.

At the same time, Grothendieck groups are only the beginning of a much larger theory. The passage from $K_0$ to higher algebraic $K$-theory reflects the realization that additive information alone does not capture the full complexity of projective modules, vector bundles, or exact categories. Quillen's higher $K$-groups, defined using homotopical and categorical methods, reveal that Grothendieck's original construction is merely the first layer of a profound and far-reaching structure. Modern algebraic $K$-theory now lies at the intersection of topology, arithmetic geometry, homotopy theory, and derived algebraic geometry, with deep connections to regulators, motivic cohomology, and special values of zeta functions.

Nevertheless, the essential idea remains remarkably simple. Whenever a category possesses a meaningful notion of addition, one may attempt to linearize it through a Grothendieck group. That this elementary idea repeatedly leads to deep structural insights is one of the most compelling features of the subject. Grothendieck groups occupy a unique position in mathematics: they are simultaneously universal constructions, computable invariants, and shadows of richer homotopical phenomena yet to emerge.

\vfill 



\end{document}